\documentclass[final]{siamltex}
\usepackage{amsmath, amssymb, amsfonts}
\usepackage{graphicx,color,caption,subcaption}
\usepackage{diagbox}
\usepackage{algorithm,algorithmic}
\usepackage{enumitem}
\usepackage{multirow}
\usepackage{verbatim}
\usepackage{hyperref}

\newcommand{\bvec}[1]{\mathbf{#1}}

\newcommand{\mc}[1]{\mathcal{#1}}
\newcommand{\mf}[1]{\mathfrak{#1}}

\newcommand{\vr}{\bvec{r}}

\newcommand{\ud}{\,\mathrm{d}}

\newcommand{\Vion}{V_{\mathrm{ion}}}
\newcommand{\abs}[1]{\lvert#1\rvert}
\newcommand{\norm}[1]{\lVert#1\rVert}

\newcommand{\wt}[1]{\underline{#1}}

\newcommand{\Or}{\mathcal{O}}

\newcommand{\I}{\imath} 

\newcommand{\CC}{\mathbb{C}}

\newtheorem{remark}[theorem]{Remark}
\newtheorem{assumption}{Genericity Assumption}
\newcommand{\proofof}[1]{\textit{Proof of #1}.}

\allowdisplaybreaks

\title{Convergence of adaptive compression methods for
Hartree-Fock-like equations}

\author{
Lin Lin\thanks{Department of Mathematics, University of California, Berkeley, Berkeley, CA 94720 and Computational Research Division, Lawrence Berkeley National Laboratory, Berkeley, CA 94720. Email: \texttt{linlin@math.berkeley.edu}}
\and Michael Lindsey\thanks{Department of Mathematics, University of California, Berkeley, Berkeley, CA 94720. Email: \texttt{lindsey@math.berkeley.edu}}
}

\begin{document}

\maketitle

\begin{abstract}
The adaptively compressed exchange (ACE) method provides an efficient
way for solving Hartree-Fock-like equations in quantum
physics, chemistry, and materials science. The key step of the ACE method
is to adaptively compress an operator that is possibly dense and full-rank.  In this paper, we present a detailed study of 
the adaptive compression operation, and establish rigorous convergence
properties of the adaptive compression method in the context of solving linear
eigenvalue problems.  Our analysis also elucidates the potential use of
the adaptive compression method in a wide range of problems.
\end{abstract}

\begin{keywords}
  Adaptive compression; Global convergence; Eigenvalue problem;
  Orthogonal projector; Hartree-Fock; Quantum chemistry
\end{keywords}

\begin{AMS}
  65F15, 15A18, 47H10, 58C30, 81V55
\end{AMS} 

\pagestyle{myheadings}
\thispagestyle{plain}

\section{Introduction}\label{sec:intro}
 
The Fock exchange operator plays a fundamental role in many-body quantum physics.
The Hartree-Fock
equation (HF)~\cite{SzaboOstlund1989} is the starting point of nearly all wavefunction based
correlation methods in quantum chemistry. Hartree-Fock-like equations also appear in 
the widely used Kohn-Sham density functional theory
(KSDFT)~\cite{HohenbergKohn1964,KohnSham1965} with hybrid
exchange-correlation
functionals~\cite{Becke1993,HeydScuseriaErnzerhof2003,PerdewErnzerhofBurke1996} in quantum chemistry and materials science.
As an example, the B3LYP functional~\cite{Becke1993}, which is only one
specific functional used by KSDFT, has generated more than $60,000$
citations.\footnote{Data from ISI Web of Science, February, 2017.}

Hartree-Fock-like equations require the solution of a large number of eigenpairs of a nonlinear integro-differential operator. From a computational perspective, after linearization and a certain numerical discretization to be detailed later, we solve the following linear eigenvalue problem
\begin{equation}
  (A+B)v_{i} = \lambda_{i} v_{i}, \quad i=1,\ldots,n.
  \label{eqn:lineig}
\end{equation}
Here $A,B\in \CC^{N\times N}$ are Hermitian matrices. The eigenvalues
$\{\lambda_{i}\}$ are real and ordered non-decreasingly. Due to the Pauli
exclusion principle we need to compute the eigenpairs $(\lambda_{i},v_{i})$
corresponding to the lowest $n$ eigenvalues, which are separated from
the rest of the eigenvalues by a positive spectral
gap $\lambda_g:=\lambda_{n+1}-\lambda_{n}$.
Here $n$ encodes the number of electrons in the system, and can range
from tens to tens of thousands. This means that a potentially large number of
eigenpairs need to be computed.
We consider the case that $N$ is large enough so that it is only
viable to use an iterative method to solve \eqref{eqn:lineig}.

In Hartree-Fock-like equations, $A$ in~\eqref{eqn:lineig} is obtained by
discretizing a differential operator involving the Laplace operator.
$B$ is obtained by discretizing the Fock exchange operator, which is an integral operator, and
$B$ is negative definite. 
The discretized Fock exchange operator $B$ is in general a dense full-rank matrix, and it is prohibitively expensive to compute or even to store $B$. 
As one refines the discretization, 
the spectral radius of $A$, denoted by $\norm{A}_{2}$, can become
unbounded, while $\norm{B}_{2}$ remains bounded.  
In an iterative method, one needs to repeatedly apply $(A+B)$ to
some vector $v$, i.e. the matrix-vector multiplication operations
$Av$ and $Bv$ are coupled together. Due to the large spectral radius
$\norm{A}_{2}$, many matrix-vector multiplications may be needed to
reach convergence. 
 In practice, each matrix vector multiplication $Bv$ requires the solution of $n$ Poisson type equations~\cite{GygiBaldereschi1986}, which is far more expensive than computing
$Av$.
Therefore the computational cost of
most iterative solvers will be dominated by the number of matrix-vector
multiplication operations involving $B$.   It is common that the evaluation of $Bv$ alone takes
$95\%$ or more of the overall computational time, which severely limits
the capability of solving Hartree-Fock-like equations for studying
quantum systems of large sizes. In the past decades, there has been a
large amount of work dedicated to reducing the cost of performing
\textit{each} matrix-vector multiplication $Bv$.
This is often done by approximating the dense matrix $B$ by a sparse matrix, which is valid
when the spectral gap $\lambda_{g}$ is
large~\cite{Kohn1996,MarzariVanderbilt1997,Gygi2009,ELiLu2010,OzolinsLaiCaflischEtAl2013,DamleLinYing2015,WuSelloniCar2009,DucheminGygi2010,ChenWuCar2010,DiStasioSantraLiEtAl2014,DawsonGygi2015}.

Recently we have developed an \textit{adaptively compressed exchange operator} formulation (ACE)~\cite{Lin2016ACE}, which reduces the cost for solving Hartree-Fock-like equations from a different and yet more general perspective. 
The key observation is that we only need to find an effective
operator $\wt{B}$ so that $Bv=\wt{B}v$ is satisfied for
$v\in\mathrm{span}~V$, where $V=[v_{1},\ldots,v_{n}]$. $\wt{B}$ can be
constructed to be of strict rank $n$, and hence the computational cost
of $\wt{B}v$ is much smaller than that of $Bv$. 
Note that the subspace
$\mathrm{span}~V$ is precisely the solution for \eqref{eqn:lineig} and
is not known \textit{a priori}. Therefore $\wt{B}$ needs to be constructed in an adaptive
manner. Starting from some initial guess $V^{(0)}$, we will obtain a sequence $V^{(k)}$ and corresponding compressed operators $\wt{B}[V^{(k)}]$. More specifically, our approach is a fixed-point iteration given by
\begin{equation}
  (A+\wt{B}[V^{(k)}])v^{(k+1)}_{i} = \lambda^{(k+1)}_{i} v^{(k+1)}_{i}, \quad i=1,\ldots,n.
  \label{eqn:acelineig}
\end{equation}
Here the operator $\wt{B}$ depends nonlinearly on $V$.
If the sequence of subspaces $\mathrm{span}~V^{(k)}$ converges to $\mathrm{span}~V$, then in the limit the compressed operator $\wt{B}[V^{(k)}]$ will agree with $B$ on $\mathrm{span}~V$, and the eigenvalue problem~\eqref{eqn:lineig} is solved \textit{without loss of accuracy}. 

This paper aims to prove
the convergence properties of this adaptive compression method. At first glance, the advantage of converting a linear eigenvalue
problem~\eqref{eqn:lineig} to a nonlinear eigenvalue
problem~\eqref{eqn:acelineig} is not clear. We will see that 
the adaptive compression method decouples the
matrix-vector multiplication operations $Av$ and $Bv$, and
asymptotically the number of $Bv$ operations is independent of the
spectral radius $\norm{A}_{2}$.

We will
demonstrate that $\wt{B}$ depends only on $\mathrm{span}~V$, so we can consider
the fixed point iteration \eqref{eqn:acelineig} to be a map $P^{(k)}
\mapsto P^{(k+1)}$, where $P^{(k)}$ is the orthogonal projector
$P^{(k)}=V^{(k)}(V^{(k)})^*$.  
Let $\mathbf{H}_N$ denote the set of Hermitian $N\times N$ matrices, and
$\mathcal{D}\subset \mathbb{C}^{N\times N}$ denote the set of rank-$n$
orthogonal projectors on $\mathbb{C}^N$.
The main results of the paper are as follows.

\vspace{2em}

\begin{theorem}[Optimality]
\label{thm:main1}
For $B \prec 0$ and any $N\times n$ matrix $V$ with linearly independent columns, the adaptive compression $\wt{B}[V]$ is the unique
rank-$n$ Hermitian matrix that agrees with $B$ on $\mathrm{span}~V$. Furthermore, $B \preceq \wt{B}[V] \preceq 0$.
\end{theorem}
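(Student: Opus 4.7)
My plan is to derive the unique candidate for $\wt{B}[V]$ directly from the required properties, and then verify the two operator inequalities. Set $W := BV$. Because $B \prec 0$ is invertible and $V$ has linearly independent columns, $W$ is $N \times n$ of full column rank, and the compression $V^*BV = W^*V$ is $n \times n$, Hermitian, and negative definite (hence invertible). The unique rank-$n$ Hermitian candidate will turn out to be
\begin{equation*}
  \wt{B}[V] \;=\; BV\,(V^*BV)^{-1}\,V^*B,
\end{equation*}
and I will show that the defining properties force this formula.

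For uniqueness, suppose $\wt{B}$ is any rank-$n$ Hermitian matrix satisfying $\wt{B}V = BV$. Then $\Ran(\wt{B}) \supseteq \mathrm{span}~W$, and a dimension count forces equality; Hermiticity then yields $\mathrm{null}(\wt{B}) = \mathrm{span}(W)^\perp$. Using, say, a thin SVD of $W$, one checks that $\wt{B}$ admits the parameterization $\wt{B} = W T W^*$ for a unique Hermitian $n \times n$ matrix $T$. Substituting into $\wt{B}V = W$ and canceling the leading full-rank factor $W$ yields $T\,(W^*V) = I_n$, whence $T = (V^*BV)^{-1}$ is forced. Existence follows by direct verification that this $\wt{B}[V]$ is Hermitian, has rank $n$, and agrees with $B$ on $\mathrm{span}~V$.

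For the upper bound $\wt{B}[V] \preceq 0$, any $u \in \CC^N$ gives $u^*\wt{B}[V]u = (V^*Bu)^*(V^*BV)^{-1}(V^*Bu) \leq 0$, since $(V^*BV)^{-1} \prec 0$. The lower bound $B \preceq \wt{B}[V]$ is the main substantive step, and I would obtain it via a Schur-complement argument. Observe that
\begin{equation*}
  \mathcal{M} \;:=\; \begin{pmatrix} V^* \\ I_N \end{pmatrix} B \begin{pmatrix} V & I_N \end{pmatrix} \;=\; \begin{pmatrix} V^*BV & V^*B \\ BV & B \end{pmatrix} \;\preceq\; 0,
\end{equation*}
since for any $x \in \CC^n$ and $y \in \CC^N$, $(x^*,\,y^*)\,\mathcal{M}\,\begin{pmatrix}x \\ y\end{pmatrix} = (Vx+y)^*\,B\,(Vx+y) \leq 0$. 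Because the $(1,1)$ block $V^*BV$ is negative definite and invertible, the Schur-complement characterization of negative semidefiniteness delivers $B - BV(V^*BV)^{-1}V^*B \preceq 0$, which is exactly $B \preceq \wt{B}[V]$.

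I expect the main conceptual obstacle to be the lower bound: uniqueness and the upper bound are essentially routine linear algebra, whereas $B \preceq \wt{B}[V]$ requires spotting the Schur-complement structure of $B - \wt{B}[V]$, or equivalently recognizing that $V(V^*BV)^{-1}V^*B$ is the orthogonal projector onto $\mathrm{span}~V$ with respect to the positive-definite $(-B)$-weighted inner product, so that $\wt{B}[V] - B$ reduces to a non-negative weighted squared-norm form.
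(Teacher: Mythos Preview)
Your proof is correct, and while both you and the paper ultimately rely on a Schur-complement argument for the inequality $B\preceq\wt{B}[V]$, the executions differ in instructive ways.

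For uniqueness, the paper extends $\{v_1,\ldots,v_n\}$ to an orthonormal basis $V_N$ of $\CC^N$, writes $B'$ in $2\times2$ block form, and argues that agreement on $\mathrm{span}\,V$ pins down three of the four blocks, after which the rank-$n$ constraint forces the last $N-n$ columns to be the unique linear combinations of the first $n$ dictated by the invertible $(1,1)$ block. Your argument instead identifies $\Ran(\wt{B})=\mathrm{span}\,W$ directly from the rank constraint, parametrizes $\wt{B}=WTW^*$, and solves for $T$. This avoids choosing an orthonormal extension and works verbatim for $V$ with merely linearly independent (not orthonormal) columns, which is what the theorem actually assumes.

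For the lower bound, the paper observes that in the $V_N$ basis the difference $\wt{B}[V]-B$ is supported on the lower-right block and equals (minus) the Schur complement $B_{22}-B_{12}^*B_{11}^{-1}B_{12}$ of $B$ itself, then invokes the standard fact that definiteness of a block matrix is equivalent to definiteness of a diagonal block together with its Schur complement. Your route builds the auxiliary $(n+N)\times(n+N)$ matrix $\mathcal{M}$ as a congruence of $B$, so $\mathcal{M}\preceq0$ is immediate, and then reads off $B-\wt{B}[V]\preceq0$ as the Schur complement of $\mathcal{M}$ with respect to its invertible $(1,1)$ block $V^*BV$. The two Schur complements are different objects (one lives in $\CC^{(N-n)\times(N-n)}$, the other in $\CC^{N\times N}$), but the underlying idea is the same. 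The paper's block viewpoint pays dividends later, since the $V_N$-basis representation is reused throughout the convergence analysis; your argument is more self-contained for this theorem in isolation.
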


\vspace{2em}

\begin{remark}
``Optimality'' refers to the minimality of the rank of the adaptive
compression $\wt{B}[V]$ amongst matrices agreeing with $B$ on
$\mathrm{span}~V$. A compression of the lowest possible rank is
desirable in order to minimize the cost of multiplication by $\wt{B}[V]$.
\end{remark}
\vspace{2em}
\begin{remark}
The implication of Theorem \ref{thm:main1} that $\wt{B}[V] \succeq B$ is
significant because it guarantees that the ``bottom-$n$'' eigenspace of $A+B$ is the same as that of $A+\wt{B}[V]$, where $V$ is obtained from the solution of \eqref{eqn:lineig}. (See Lemma \ref{lem:orderzero} below.) This ensures that the fixed-point iteration \eqref{eqn:acelineig} has $V$ as a fixed point, as is necessary for convergence.
\end{remark}

\vspace{2em}

\begin{theorem}[Local convergence]
\label{thm:main2}
For every pair $(A,B)\in \mathbf{H}_N \times \mathbf{H}_N$ with  $B \prec
0$, the fixed point iteration \eqref{eqn:acelineig} converges locally to $P = VV^*$. 
The number of matrix-vector multiplications $Bv$ 
needed for $k$ steps of fixed point iteration is $nk$. Starting from
$P^{(0)}\in \mc{D}$, the asymptotic convergence rate is
\[
\norm{P-P^{(k)}}_2 \lesssim  \gamma^k \norm{P-P^{(0)}}_2, \quad \textnormal{where}\ \ \gamma \leq \frac{\norm{B}_2}{\norm{B}_2+\lambda_g}.
\]
\end{theorem}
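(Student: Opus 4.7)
The plan is to view the ACE iteration as a smooth self-map $\Phi:\mc{D}\to\mc{D}$ on the Grassmannian of rank-$n$ orthogonal projectors, confirm via Theorem~\ref{thm:main1} and the second remark that the exact bottom-$n$ projector $P=VV^*$ of $A+B$ is a fixed point of $\Phi$, and then control the operator norm of the Fr\'echet derivative $D\Phi|_P$. Standard local convergence theory for smooth iterations at a fixed point (e.g.\ Ostrowski's theorem) will then supply both the convergence statement and the asymptotic rate, while the linear count of $Bv$ operations comes from the factorized form of $\wt{B}[V^{(k)}]$.

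The linearization will rely on the structural identities produced by Theorem~\ref{thm:main1}. Define $M[P] := \wt{B}[P] - B$. Since $\wt{B}[P]$ agrees with $B$ on $\mathrm{Ran}\,P$ we have $M[P]P = 0$, and Hermiticity then forces $M[P] = P^\perp M[P] P^\perp$; the sandwich $B\preceq \wt{B}[P]\preceq 0$ gives $0 \preceq M[P] \preceq -B$, so $\|M[P]\|_2 \le \|B\|_2$. Because $P$ commutes with $A+B$, the decomposition $A+\wt{B}[P] = (A+B) + M[P]$ is block-diagonal in the $(P,P^\perp)$ decomposition, which both reconfirms that $P$ is a fixed point and shows that the spectral gap of $A+\wt{B}[P]$ is at least $\lambda_g$.

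I will parametrize tangent vectors at $P$ as $Q = P^\perp X P + (P^\perp X P)^*$, write $Y_0 := P^\perp X P$, and decompose $\Phi = \Psi\circ \Xi$ with $\Xi(P) = A+\wt{B}[P]$ and $\Psi$ the map returning the bottom-$n$ spectral projector. Implicit differentiation of the identity $M[P^{(k)}]P^{(k)}=0$ at the fixed point gives $\delta M\cdot P = -M[P]\,Y_0$, and first-order perturbation theory for the bottom-$n$ eigenspace of a Hermitian operator then produces the Sylvester equation
\[
M_{11} Y - Y M_{00} \;=\; M[P]\,Y_0,
\]
where $Y = P^\perp \bigl(D\Phi|_P(Q)\bigr) P$, $M_{11} = P^\perp(A+B)P^\perp + M[P]$, and $M_{00} = P(A+B)P$. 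The block structure gives $\|Q\|_2 = \|Y_0\|_2$ and $\|D\Phi|_P(Q)\|_2 = \|Y\|_2$, so the task reduces to bounding the ratio $\|Y\|_2/\|Y_0\|_2$.

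The main obstacle is the operator-norm estimate
\[
\|Y\|_2 \;\le\; \frac{\|M[P]\|_2}{\|M[P]\|_2 + \lambda_g}\,\|Y_0\|_2,
\]
after which monotonicity of $x\mapsto x/(x+\lambda_g)$ on $[0,\infty)$ and $\|M[P]\|_2 \le \|B\|_2$ deliver the advertised $\gamma$. The naive Sylvester-inverse bound only yields $\|M[P]\|_2/\lambda_g$, which is too weak; the improvement must exploit the fact that $M[P]$ appears simultaneously as a factor on the right-hand side and as the shift inside $M_{11} = H_1 + M[P]$, so that in joint eigenbases $y_{ij} = (M[P]Y_0)_{ij}/(\mu_i - \lambda_j)$ one observes that numerator contributions with large $M[P]$-weight are matched by correspondingly enlarged denominators $\mu_i - \lambda_j$. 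This cancellation yields exactly $c/(c+\lambda_g)$ in the commuting/scalar case with $c = \|M[P]\|_2$, and the general operator-norm case is handled by a careful tracking of this effect. Finally, the $nk$ count of $B$-multiplications is immediate from the factorization $\wt{B}[V^{(k)}] = BV^{(k)}\bigl((V^{(k)})^*BV^{(k)}\bigr)^{-1}(V^{(k)})^*B$: a single outer step requires only the $n$ products $BV^{(k)}$, after which the inner Hermitian eigensolver for $A+\wt{B}[V^{(k)}]$ makes no further calls to $B$.
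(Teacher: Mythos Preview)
Your overall architecture is sound and matches the paper's: view the iteration as a smooth self-map on the Grassmannian, verify that $P$ is a fixed point via the block structure of $M[P]=\wt{B}[P]-B$, linearize, and invoke Ostrowski. Your Sylvester equation is also correct: in the eigenbasis of $A+B$, the $(2,1)$ block of $D\Phi|_P(Q)$ is obtained columnwise as $Y_i = J_i X_i$ with $J_i = (D_i + C)^{-1} C$, where $C=-S_{22}=(M[P])_{22}\succ 0$ and $D_i=\Lambda_2-\lambda_i\succeq \lambda_g I$. This is exactly the paper's $J_i$ (Remark~\ref{remark:Zmatrix}).

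The gap is the key estimate you state as an \emph{operator-norm} bound,
\[
\|Y\|_2 \le \frac{\|M[P]\|_2}{\|M[P]\|_2+\lambda_g}\,\|Y_0\|_2,
\]
which is false in general. Already for $n=1$ this would require $\|J_1\|_2\le \|C\|_2/(\|C\|_2+\lambda_g)$, but $J_1=(D_1+C)^{-1}C$ is not Hermitian, and its operator norm can exceed this ratio. Concretely, with $D_1=\mathrm{diag}(1,100)$ and $C=\left(\begin{smallmatrix}1&0.9\\0.9&1\end{smallmatrix}\right)$ one computes $\|J_1\|_2\approx 0.669$ while $\|C\|_2/(\|C\|_2+\lambda_g)=1.9/2.9\approx 0.655$. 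Your heuristic that ``large $M[P]$-weight in the numerator is matched by enlarged denominators'' is correct at the level of eigenvalues but does not survive passage to singular values, because $C$ and $D_i$ do not commute.

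What does hold---and what both Ostrowski and the theorem's \emph{asymptotic} statement actually require---is the \emph{spectral radius} bound $\rho(J_i)\le \|C\|_2/(\|C\|_2+\lambda_g)$. The paper obtains it by the similarity $J_i \sim C^{1/2}(D_i+C)^{-1}C^{1/2} = (I+C^{-1/2}D_iC^{-1/2})^{-1}$, which is Hermitian with eigenvalues in $(0,\gamma]$ since $C^{-1/2}D_iC^{-1/2}\succeq \lambda_g C^{-1}\succeq (\lambda_g/\|C\|_2)I$. Because $D\Phi|_P$ is block-diagonal with diagonalizable blocks $J_i$, its spectral radius is $\max_i\rho(J_i)\le\gamma$, and Ostrowski then yields $\|P^{(k)}-P\|_2\lesssim(\gamma+\epsilon)^k$ for every $\epsilon>0$, which is exactly the asymptotic rate in Theorem~\ref{thm:main2} (and the paper's Theorem~\ref{thm:convlineig}). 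So the fix is small but essential: replace your operator-norm claim by the spectral-radius argument via the $C^{1/2}$-similarity.
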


\vspace{2em}

\begin{theorem}[Global convergence]
\label{thm:main3}
For almost every pair $(A,B)\in \mathbf{H}_N \times \mathbf{H}_N$ (with
respect to the Lebesgue measure on $\mathbf{H}_N \times \mathbf{H}_N$)
with  $B \prec 0$, the fixed point iteration \eqref{eqn:acelineig}
converges globally to $P = VV^*$ for almost every initial guess
$P^{(0)}\in \mathcal{D}$ (with respect to a natural measure on
$\mathcal{D}$). 
\end{theorem}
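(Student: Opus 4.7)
The plan is to combine a strict Lyapunov argument along the iteration with a stable--manifold argument at the non-optimal fixed points of the map $F : \mc{D} \to \mc{D}$ sending $P^{(k)} \mapsto P^{(k+1)}$.

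First I would establish monotonicity of the energy $\mc{E}(P) := \Tr(P(A+B))$. Because $\wt{B}[P^{(k)}]$ agrees with $B$ on $\Ran P^{(k)}$, cyclicity of the trace gives $\Tr(P^{(k)}(A+\wt{B}[P^{(k)}])) = \mc{E}(P^{(k)})$. Because $P^{(k+1)}$ minimizes $\Tr(Q(A+\wt{B}[P^{(k)}]))$ over $Q \in \mc{D}$, and because $\wt{B}[P^{(k)}] \succeq B$ by Theorem~\ref{thm:main1}, one obtains
\[
\mc{E}(P^{(k+1)}) \;\leq\; \Tr(P^{(k+1)}(A+\wt{B}[P^{(k)}])) \;\leq\; \mc{E}(P^{(k)}),
\]
with equality forcing $P^{(k+1)} = P^{(k)}$ as soon as the bottom-$n$ gap of $A+\wt{B}[P^{(k)}]$ is positive. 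Compactness of $\mc{D}$ then forces every trajectory to accumulate on the set of fixed points of $F$. A point $P_\star$ is fixed iff $\Ran P_\star$ is the bottom-$n$ eigenspace of $A+\wt{B}[P_\star]$; since $\wt{B}[P_\star]$ agrees with $B$ on $\Ran P_\star$, any such range must be spanned by $n$ eigenvectors of $A+B$. Under the full-measure genericity assumption that $A+B$ has simple spectrum, the candidate fixed points form a finite collection $\{P_I : I \subset \{1,\ldots,N\},\, \abs{I}=n\}$, among which $P = P_{\{1,\ldots,n\}}$ is the unique global minimizer of $\mc{E}$ on $\mc{D}$ (by the Ky Fan trace inequality) and is asymptotically stable by Theorem~\ref{thm:main2}.

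It remains to rule out convergence to any $P_I$ with $I \neq \{1,\ldots,n\}$ on a full-measure set of initial data. I would compute $DF(P_I)$ via the same perturbation calculation that underlies Theorem~\ref{thm:main2}; the resulting formula is identical except that the ``gaps'' $\lambda_j - \lambda_i$ for $j\notin I$, $i\in I$ may now be negative, and any negative such gap produces an eigenvalue of $DF(P_I)$ with modulus strictly greater than $1$. A slight strengthening of the genericity assumption, ruling out finitely many algebraic coincidences among these contraction/expansion factors, makes every $P_I \neq P$ a hyperbolic fixed point with a nontrivial unstable direction. The Center--Stable Manifold Theorem applied to the smooth map $F$ on the compact finite-dimensional manifold $\mc{D}$ then exhibits, at each such $P_I$, a local stable set that is a smooth submanifold of positive codimension, hence negligible with respect to the natural measure on $\mc{D}$. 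The full basin of $P_I$ is the countable union $\bigcup_{k \geq 0} F^{-k}(W^s_{\mathrm{loc}}(P_I))$, a countable union of smooth preimages of a null set, and is therefore itself null. Combined with the Lyapunov argument, this yields convergence to $P$ for almost every $P^{(0)} \in \mc{D}$. The main technical obstacle will be the explicit linearization of $F$ at each $P_I$ and the verification that the ensuing hyperbolicity condition carves out only a measure-zero exceptional set in $\mathbf{H}_N \times \mathbf{H}_N$; secondary technicalities are the smoothness of $F$ near each fixed point and the uniform positivity of the bottom-$n$ spectral gap of $A + \wt{B}[\cdot]$ along trajectories, both of which reduce to further generic properties of $(A,B)$.
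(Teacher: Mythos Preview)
Your overall architecture matches the paper's closely: monotone energy (the paper uses each individual eigenvalue $\lambda_i^{(k)}$, you use their sum), identification of fixed points with projectors onto $n$ eigenvectors of $A+B$ under simple-spectrum genericity, linearization at each $P_I$ to detect an unstable direction when $I\neq\{1,\ldots,n\}$, and a stable-manifold argument to exclude the bad basins. Two gaps remain.

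\textbf{The serious one.} Your final step asserts that $\bigcup_{k\geq 0} F^{-k}(W^s_{\mathrm{loc}}(P_I))$ is null because it is a ``countable union of smooth preimages of a null set.'' This is exactly the place where the paper has to work hardest. The map $F=\mf{F}$ is not a diffeomorphism on $\mathcal{D}$; it is not even continuous where the bottom-$n$ gap of $A+\wt{B}[Q]$ closes, and where it \emph{is} smooth its Jacobian could in principle be singular on a set of positive measure, in which case $F^{-1}$ of a null set need not be null. The paper isolates this as the ``Egg on Barn Lemma'' (Lemma~\ref{lem:preimage}) and proves it by (i) showing, via a transversality argument, that for generic $A$ the gap-closing locus has codimension at least two so that $F$ is real-analytic on a connected open set $\mathcal{W}$ of full measure; (ii) invoking analytic continuation to conclude that $\{DF\ \text{singular}\}$ is either all of $\mathcal{W}$ or null; (iii) ruling out the former using the invertibility of $DF_P$ established in the local analysis. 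Only then can one pull back null sets. Your proposal treats this as automatic, but it is the crux of the global result.

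\textbf{The minor one.} From the Lyapunov inequality and compactness you conclude that trajectories accumulate on the fixed-point set, but accumulation does not by itself give convergence to a \emph{single} $P_I$. The paper closes this by proving quantitatively that $\Vert P^{(k)}-P^{(k-1)}\Vert_2 \to 0$ (Lemma~\ref{lem:densMatChange}, using $\wt{B}[Q]-B\succeq t(I-Q)$ uniformly over $\mathcal{D}$), then that near-fixed points are near some $P_\tau$ (Lemma~\ref{lem:almostEigen}), and finally that isolation of the finitely many $P_\tau$ forces convergence to one of them (Proposition~\ref{prop:globalConvToFixed}). Your sketch would need an analogous step; strict monotonicity of $\mathcal{E}$ alone does not preclude a trajectory oscillating between neighborhoods of two $P_\tau$ with the same energy level or approaching a connected component of fixed points.
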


\vspace{2em}

\begin{remark}
With minor modification, the condition $B \prec 0$ can be relaxed, so
that the adaptive compression method is applicable to all $B\in
\mathbf{H}_{N}$. See Section \ref{sec:definiteReduction}.
\end{remark}

\vspace{2em}

\begin{remark}
Let $\mathbf{S}_N$ denote the set of real-symmetric $N\times N$
matrices, and $\mathcal{D}_{\mathbb{R}}\subset \mathbb{R}^{N\times N}$
denote the set of rank-$n$ orthogonal projectors on $\mathbb{R}^N$.
Then Theorems~\ref{thm:main2} and~\ref{thm:main3} hold if we replace $\mathbf{H}_N$
with $\mathbf{S}_N$ and $\mathcal{D}$ with $\mathcal{D}_{\mathbb{R}}$.
\end{remark}

\vspace{2em}

In
practice, Eq.~\eqref{eqn:lineig} is only the linearized
Hartree-Fock-like equation, and it is possible to 
employ the flexibility in the adaptive compression formulation by
delaying the update of the compressed operator $\wt{B}$ to further
reduce the number of $Bv$ operations.  This strategy is undertaken in
\cite{Lin2016ACE}. Numerical observation indicates that the ACE
formulation can significantly reduce the number of iterations to solve
Hartree-Fock-like equations, and may reduce the computational time by an
order of magnitude~\cite{Lin2016ACE}. The adaptive compression
formulation has already been adopted by community software packages for
electronic structure calculations such as Quantum
ESPRESSO~\cite{GiannozziBaroniBoniniEtAl2009} for solving
Hartree-Fock-like equations for real materials. 

\subsection{Applicability to nearly degenerate eigenvalue problems}\label{subsec:metals}

Theorem~\ref{thm:main2} suggests that the adaptive compression method
converges fast when the spectral gap $\lambda_{g}$ is large, which is
the case for insulating systems in quantum physics. However,
$\lambda_{g}$ is small for semiconducting systems, and can be virtually
zero for metallic systems. In this case, 
one can compute $n$ eigenvectors, where $n$ is set to be larger than
$m$, the
number of eigenvectors needed in solving Hartree-Fock-like equations.
Although the convergence of the rank-$n$ projector $P^{(k)}$ is expected to be slow,
one is actually only interested in the convergence of the rank-$m$
``sub-projector'' $P_m^{(k)}$ onto the span of the lowest $m$
eigenvectors.  This procedure is rigorously justified in
Theorem~\ref{thm:sub-projector}. We find that the asymptotic convergence
rate of the sub-projector is governed by the gap $\lambda_{n+1}-\lambda_m$, rather than the gap $\lambda_{m+1} -
\lambda_{m}>0$, which is assumed to be positive only to ensure that the
rank-$m$ orthogonal projector $P_m$ is unambiguously defined.

\begin{theorem}[Convergence of sub-projectors]\label{thm:sub-projector}
Let $P^{(k)}$ converge to $P$ (as broadly guaranteed by Theorem \ref{thm:main3}). Then $P_m^{(k)}$ converges to $P_m$ with asymptotic convergence rate given by
\[
\norm{P_m-P_m^{(k)}}_2 \lesssim  \gamma_m^k \norm{P-P^{(0)}}_2, \quad \textnormal{where}\ \ \gamma_m \leq \frac{\norm{B}_2}{\norm{B}_2+\Delta_m}.
\]
Here $\Delta_m = \lambda_{n+1}-\lambda_{m}$.
\end{theorem}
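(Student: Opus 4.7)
The plan is to adapt the local linearization of the fixed-point map $\Phi:P^{(k)}\mapsto P^{(k+1)}$ from the proof of Theorem \ref{thm:main2}, restricting attention to the subset of tangent directions at $P$ that actually feed into the rank-$m$ sub-projector. The global convergence in Theorem \ref{thm:main3} places us, for $k$ large, in a neighborhood of the fixed point, so I would parameterize tangent vectors $\delta P$ to $\mathcal{D}$ at $P$ in the eigenbasis $\{u_1,\dots,u_N\}$ of $A+B$ by their off-diagonal matrix elements $\xi_{ij}:=u_i^*\,\delta P\,u_j$ for $i\leq n<j$, and similarly for tangent vectors to the rank-$m$ Grassmannian at $P_m$.

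The decisive structural input is that the compressed error $\epsilon^{(k)}:=\wt{B}[V^{(k)}]-B$ satisfies $\epsilon^{(k)}P^{(k)}=0$ (by Theorem \ref{thm:main1}), so in the eigenbasis of $A+B$ the block $(i,j)$ of $\epsilon^{(k)}$ with both indices $\leq n$ is quadratic in $\norm{P-P^{(k)}}_2$, while the block with $i\leq n<j$ is linear. Applying first-order spectral perturbation theory to the bottom-$m$ eigenvectors of $M^{(k)}=A+B+\epsilon^{(k)}$ then expresses the rotation of $u_i$ (for $i\leq m$) into $u_j$ ($j\neq i$) as $(\epsilon^{(k)})_{ji}/(\lambda_i-\lambda_j) + O(\norm{P-P^{(k)}}_2^2)$, which contributes to $P_m^{(k+1)}-P_m$ at first order only when $j>n$. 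This isolates the sub-projector dynamics to the index set $\{(i,j):i\leq m<n<j\}$.

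Running the same algebraic computation that produces the bound $\gamma\leq\norm{B}_2/(\norm{B}_2+\lambda_g)$ in Theorem \ref{thm:main2}, but restricted to these indices, replaces $\lambda_g=\lambda_{n+1}-\lambda_n$ in the denominator by the minimum gap $\lambda_j-\lambda_i\geq\lambda_{n+1}-\lambda_m=\Delta_m$ over the restricted set. The prefactor $\norm{B}_2$ in the numerator comes from the operator-norm estimate $\norm{\epsilon^{(k)}}_2\lesssim\norm{B}_2\,\norm{P-P^{(k)}}_2$, obtained by differentiating the closed form $\wt{B}[V]=BV(V^*BV)^{-1}V^*B$ about the fixed point. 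Iterating $k$ times yields the stated geometric bound $\gamma_m^k\,\norm{P-P^{(0)}}_2$, with $\norm{P-P^{(0)}}_2$ (rather than $\norm{P_m-P_m^{(0)}}_2$) on the right-hand side because after one step any component of the initial full-projector error can in principle feed into the sub-projector.

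The main obstacle, in my view, is verifying cleanly that the quadratic-in-$\delta P$ contributions from the interior block $\{i,j\leq n\}$ of $\epsilon^{(k)}$ truly do not spoil the linearized rate at leading order; the constraint $\epsilon^{(k)}P^{(k)}=0$ supplies the needed cancellation, but propagating this cancellation through the Riesz projector formula
\[
P_m^{(k+1)}=\frac{1}{2\pi\I}\oint_{\Gamma}(zI-M^{(k)})^{-1}\,dz
\]
and extracting the sharp constant requires some care in choosing a contour $\Gamma$ that separates $\{\lambda_1,\ldots,\lambda_m\}$ from the rest of the spectrum at a distance of order $\Delta_m$, and in controlling the Neumann expansion of the resolvent uniformly on $\Gamma$.
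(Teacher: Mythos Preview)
Your first-order analysis is correct: in tangent coordinates $X\in\CC^{(N-n)\times n}$ at $P$, the Jacobian $D\mf{F}_P$ acts block-diagonally as $X_i\mapsto J_iX_i$, and for $i\leq m$ the argument of Lemma~\ref{lem:eigenJ} with $\lambda_g$ replaced by $\lambda_{n+1}-\lambda_i\geq\Delta_m$ gives $\norm{J_i}_2\leq\gamma_m$. The sub-projector error is indeed governed, at first order, by the columns $X_1,\ldots,X_m$ alone.

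The gap is in passing from the linearized contraction to the nonlinear iterate. Iterating the first-order bound and treating the quadratic remainder as a perturbation does not work in general: the quadratic terms fed into the columns $i\leq m$ are of size $\norm{P-P^{(k)}}_2^2\sim\gamma^{2k}$, and whenever $\Delta_m>2\lambda_g+\lambda_g^2/\norm{B}_2$ one has $\gamma^2>\gamma_m$, so these terms swamp the target rate $\gamma_m^k$. Your observation that $\epsilon^{(k)}P^{(k)}=0$ forces the $(i,j)$-entries of $\epsilon^{(k)}$ with both $i,j\leq n$ to be quadratic is correct, but this alone does not kill the second-order coupling of $X_{m+1},\ldots,X_n$ back into $X_1,\ldots,X_m$ through the full map $\mf{F}$. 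Carrying the Riesz-projector expansion to all orders, as your last paragraph contemplates, would not obviously produce the needed cancellation either.

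The paper resolves this by a structurally different observation: the submanifold $\mc{D}_m=\{Q\in\mc{D}:\mathrm{Im}(P_m)\subset\mathrm{Im}(Q)\}$ is \emph{exactly invariant} under the nonlinear map $\mf{F}$ near $P$, because if $v_1,\ldots,v_m\in\mathrm{Im}(Q)$ then $\wt{B}[Q]v_i=Bv_i$ and hence $(\lambda_i,v_i)$ remain eigenpairs of $A+\wt{B}[Q]$, which for $Q$ near $P$ stay among the bottom $n$. This invariance is the geometric upgrade of your constraint $\epsilon^{(k)}P^{(k)}=0$ that you did not extract. With an exactly invariant submanifold tangent to $\{X_1=\cdots=X_m=0\}$, a single application of the mean value theorem along segments transverse to it (Lemma~\ref{lem:invariantSubspace}: after straightening, $F_1(0,x_2)=0$ identically, so $\norm{F_1(x_1,x_2)}\leq\sup_t\norm{DF_1(tx_1,x_2)}\,\norm{x_1}$) gives $\mathrm{dist}(P^{(k)},\mc{D}_m)\leq\gamma_m^k\,\mathrm{dist}(P^{(0)},\mc{D}_m)$ with no separate bookkeeping of higher-order terms. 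The sub-projector bound follows since $\mf{F}_m$ is constant (equal to $P_m$) on $\mc{D}_m$ and Lipschitz nearby.
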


\subsection{Applicability to more general problems}

Although we have Hartree-Fock-like equations in mind throughout the
paper, it is easy to see that the adaptive compression method can be
applied to a  wider variety of 
problems. The case that $B$ is ``small'' and ``costly'' can occur when
$B$ comes from the discretization of an integral operator or a more
general nonlocal operator.  For example, in linear response theories
such as time-dependent density functional theory and Bethe-Salpeter
equations~\cite{RungeGross1984,OnidaReiningRubio2002}, generalized eigenvalue problems arise involving matrices of the form $A+B$, where $A$ is a diagonal matrix and $B$ is a discretized
nonlocal operator with additional structure. Adaptive compression
methods with structure-preserving properties could be applicable to
these problems. The concept of adaptive compression can also be useful
in solving linear equations, as recently demonstrated in the adaptively
compressed polarizability operator formulation for first principle
phonon spectrum calculations~\cite{LinXuYing2017}.
We are currently exploring these directions.

\subsection{Related work}

A Hartree-Fock-like equation, considered as in ~\eqref{eqn:lineig} after linearization
and discretization, constitutes a standard linear eigenvalue problem, and the
present work should be directly compared with existing iterative
eigensolvers, such as the subspace iteration
method~\cite{Parlett1980}, the shift-invert Lanczos
method~\cite{ParlettSaad1987}, the
preconditioned steepest descent method~\cite{Davidson1975},  the preconditioned
conjugate gradient method~\cite{Knyazev2001},  the Jacobi-Davidson method~\cite{SleijpenVanDerVorst2000}, etc. 
In these approaches, the matrix-vector multiplication always
takes the form $(A+B)v$, and the number of $Bv$ operations is $n$ times the
number of iterations. In the absence of a good preconditioner, the
number of iterations in these solvers typically
depends on $\norm{A+B}_{2}$, which is undesirable. Even when a
good preconditioner is available, we still find that the adaptive
compression method can be advantageous, thanks to the flexibility
introduced by decoupling $Av$ and $Bv$ operations. Note that
Eq.~\eqref{eqn:acelineig} is only a fixed point iteration, and the
convergence rate of the adaptive compression method can be further
enhanced by combining with existing acceleration techniques such as
the usage of conjugate directions~\cite{Knyazev2001} and Broyden type
methods~\cite{Anderson1965}. We will report detailed numerical study of
the adaptive compression methods in a forthcoming publication.
We also note that the adaptive
compression method is very simple to implement and only requires a ``black-box'' subroutine for the
computation of $Bv$. Hence in the context of solving Hartree-Fock-like
equations, it is compatible with any existing method that reduces the
cost of the matrix-vector multiplication, such as those using linear
scaling techniques
and using fast solvers for elliptic
equations.

\subsection{Outline of the paper}

The rest of the paper is organized as follows. After presenting a brief introduction
to Hartree-Fock-like equations in Section~\ref{sec:hf}, we introduce the
adaptive compression method in Section~\ref{sec:ace}.
Section~\ref{sec:properties} discusses the properties and optimality of
the compression map $V\mapsto \wt{B}[V]$.  In Section~\ref{sec:local} we
establish the local
convergence with an asymptotic rate, followed
by the global convergence in Section~\ref{sec:global}. Finally, some technical calculations and proofs omitted
in the main text are presented in the appendices.

\section{Hartree-Fock-like equations}\label{sec:hf}

The Hartree-Fock-like equations are a set of nonlinear equations as
follows~\cite{Martin2004}
\begin{equation}
  \begin{split}
    &H[P]\psi_{i} = \left(-\frac12 \Delta  +
    \Vion +
    V_{\text{Hxc}}[P] + V_{X}[P]\right)\psi_{i} = \varepsilon_{i} {\psi}_{i},\\
    &\int {\psi}^{*}_{i}(\vr) {\psi}_{j}(\vr) \ud \vr = \delta_{ij},
    \quad
    P(\vr,\vr') = \sum_{i=1}^{N_e} \psi_i(\vr) \psi_i^*(\vr').
  \end{split}
  \label{eqn:HF}
\end{equation}
Here the eigenvalues $\{\varepsilon_{i}\}$ are ordered non-decreasingly,
and $N_e$ is the number of electrons (spin degeneracy omitted). 
$P$ is the density matrix, which is an orthogonal projector with an
exact rank $N_e$. The diagonal entries of the kernel of $P$ gives the electron density $\rho(\vr)=P(\vr,\vr)$. 
$\Vion$ characterizes the electron-ion interaction in all-electron
calculations. 
$V_{\text{Hxc}}$ is a local operator, and characterizes the Hartree
contribution and the exchange-correlation contribution modeled at a
local or semi-local level. It typically  depends only on the
electron density.  The exchange operator $V_{X}$ is an integral operator
with kernel
\begin{equation}
  V_{X}[P](\vr,\vr') = 
  -P(\vr,\vr') K(\vr,\vr').
  \label{eqn:VXkernel}
\end{equation} 
Here $K(\vr,\vr')$ is the kernel for the electron-electron interaction.
For example, in the Hartree-Fock theory, $K(\vr,\vr')=1/\abs{\vr-\vr'}$
is the Coulomb operator. In screened exchange
theories~\cite{HeydScuseriaErnzerhof2003}, $K$
can be a screened Coulomb operator with kernel
$K(\vr,\vr')=\text{erfc}(\mu\abs{\vr-\vr'})/\abs{\vr-\vr'}$.
$V_X$ is a negative semidefinite operator.  The kernel of $V_X$  is not low
rank due to the Hadamard product (i.e. element-wise product) between the
kernels of $P$ and $K$. From a computational perspective, it is
prohibitively expensive to explicitly construct $V_X[P]$, and it is only
viable to apply it to a vector $v(\vr)$ as
\begin{equation}
  \left(V_{X}[P]v\right)(\vr) = 
  -\sum_{i=1}^{N_{e}} \psi_{i}(\vr) \int
  K(\vr,\vr')\psi_{i}^{*}(\vr')v(\vr') \ud \vr'.
  \label{eqn:applyVX}
\end{equation}
This operation is much more expensive than computing 
$(H[P]-V_{X}[P])v$. In practical Hartree-Fock calculations, the application of $V_X[P]$ to vectors can often take more than $95\%$ of the overall computational time.

The Hartree-Fock-like equations require the density matrix $P$ to be
computed self-consistently.  A common strategy is to solve the
linearized Hartree-Fock equation by fixing the density matrix $P$ so
that $H[P]$ becomes a fixed operator. Then one solves a nonlinear fixed
point problem to obtain the self-consistent $P$. The most time consuming
step is to solve the linearized Hartree-Fock equation.  After numerical
discretization, this gives rise to the linear eigenvalue
problem~\eqref{eqn:lineig}, where $B$ corresponds to the discretized
Fock operator $V_{X}[P]$, and $A$ corresponds to the remaining part
$H[P]-V_X[P]$. We also remark that after numerical discretization,
$B$ is a negative definite matrix. 

\section{Adaptive compression method}\label{sec:ace}

\subsection{Method description}

In order to reduce the number of matrix-vector multiplication operations
$Bv$, the simplest idea is to fix $w_{i}:=B v_{i}$ at some stage, and
to replace $Bv_{i}$ by $w_{i}$ for a number of
iterations. This leads to the following sub-problem
\begin{equation}
  A v_{i} + w_{i} = \lambda_{i} v_{i}, \quad i=1,\ldots,n.
  \label{eqn:fixw}
\end{equation}
Note that Eq.~\eqref{eqn:fixw} is not an eigenvalue problem:
if $v_{i}$ is a solution to \eqref{eqn:fixw},
then $v_{i}$ multiplied by a constant $c$ is typically not a solution.
Eq.~\eqref{eqn:fixw} could be solved using 
optimization based methods, but such a problem is typically more difficult 
than an Hermitian eigenvalue problem. 
In practice, software packages for solving Hartree-Fock-like equations
are typically built around eigensolvers, which is another important 
factor that makes the sub-problem~\eqref{eqn:fixw} undesirable.

The adaptive compression method reuses the information in
$\{w_{i}=Bv_{i}\}$ in a 
different way, which retains the structure of the eigenvalue problem~\eqref{eqn:lineig}. Define $V=[v_{1},\ldots,v_{n}]$,
$W=[w_{1},\ldots,w_{n}]$, so $V,W\in \CC^{N\times n}$, and
construct
\begin{equation}
  \wt{B}[V] = W(W^{*}V)^{-1}W^{*}.
  \label{eqn:Bace}
\end{equation}
Since $B\prec 0$, $W^{*}V\equiv
V^{*}BV$ has only negative eigenvalues and is invertible.  $\wt{B}[V]$ is 
Hermitian of rank $n$, and agrees with $B$ when applied to $V$ as
\begin{equation}
  \wt{B}[V]V = W(W^{*}V)^{-1}W^{*}V = W = BV.
  \label{eqn:Bconsistency}
\end{equation}
We shall refer to the operation from $B$ to $\wt{B}[V]$ as an \textit{adaptive compression}. 

In an iterative scheme, denote by
$V^{(k)}=[v_{1}^{(k)},\ldots,v_{n}^{(k)}]$ the approximate eigenvectors
at the $k$-th iteration of~\eqref{eqn:acelineig}. Then the adaptive
compression method proceeds as follows. 
After $\wt{B}[V^{(k)}]$ is constructed, \eqref{eqn:acelineig} can be solved via
\textit{any} iterative eigensolver to obtain $V^{(k+1)}$. The
iterative eigensolver only requires the application of $A$ and the low
rank matrix $\wt{B}$ to vectors, and does not require any additional
application of $B$ until $V^{(k+1)}$ is obtained. If $\text{span}~V^{(k)}$
converges to  $\text{span}~V$, then the consistency condition
$\wt{B}[V]V = BV$ is satisfied, and the adaptive compression method is numerically exact.
The adaptive compression method for solving the linear eigenvalue
problem~\eqref{eqn:lineig} is given in Algorithm ~\ref{alg:acelineig}, where
we initialize $V^{(0)}$ by solving the eigenvalue problem in the absence
of $B$.

\begin{algorithm}[H]
\caption{The adaptive compression method for solving Eq.~\eqref{eqn:lineig}}

\begin{algorithmic}[1]

  \STATE Initialize $V^{(0)}$ by solving
  $A v_{i}^{(0)} = \lambda_{i}^{(0)} v_{i}^{(0)},\quad i=1,\ldots,n$.

  \WHILE {convergence not reached}

  \STATE Compute $W^{(k)}=BV^{(k)}$.
  \STATE Evaluate $\left[ (W^{(k)})^*
  V^{(k)}\right]^{-1}$ to construct
  $\wt{B}[V^{(k)}]$ implicitly.
  
  \STATE Solve \eqref{eqn:acelineig} to obtain $V^{(k+1)}$.
  
  \STATE Set $k\gets k+1$.
  
  \ENDWHILE

\end{algorithmic}
\label{alg:acelineig}
\end{algorithm}

\subsection{Relaxing the definitiveness condition for $B$}\label{sec:definiteReduction}

As will be seen later, the condition that $B\prec 0$ is important for
the consistency of the adaptive compression method, but this constraint
can be easily relaxed as follows for more general $B$.
Note that replacing $B$ with $B_t := B-t$ (here $t$ as a matrix means the identity
matrix scaled by a real number $t$) in the eigenvalue problem
\eqref{eqn:lineig} yields an equivalent eigenvalue problem, where
all eigenvalues are shifted down by $t$ and the corresponding
eigenspaces are unchanged.  
Thus taking $t > \lambda_{\max} (B)$ ensures that $B_t$ is negative
definite.  
We call this procedure a \textit{$t$-shifted adaptive compression}. 
The spirit of this construction is related to the
``level-shifting'' method used in quantum
chemistry~\cite{SaundersHillier1973}.
Theorem~\ref{thm:main2} suggests that the convergence rate of Algorithm
\ref{alg:acelineig} can be optimized by minimizing $\norm{B_{t}}_{2}$. 
This also opens up the interesting possibility of accelerating the
convergence of the adaptive compression method by taking $t$ to be
negative when $B$ is already negative definite. 
In the discussion below, we will assume that $B$ is negative
definite unless otherwise specified.

\section{Optimality of the adaptive compression}\label{sec:properties}

In \eqref{eqn:Bace}, we have specified how to compress an Hermitian
negative definite matrix $B$ into a rank $n$ matrix with the same
behavior on $\mathrm{span}~V$. 
Since $V$ has orthonormal columns, the orthogonal projector onto $\mathrm{span}~V$ is 
\begin{equation}
  P = V V^{*}.
  \label{eqn:DM}
\end{equation}
In the context of Hartree-Fock-like equations, $P$ in
Eq.~\eqref{eqn:DM} is the discretized density matrix. In the discussion
below, we use the terminology \textit{density matrix} in a slightly more
general sense:

\begin{definition}\label{def:dm}
  For $H\in \mathbf{H}_{N}$ with eigenvalues
  $\{\lambda_{i}\}_{i=1}^{N}$ ordered non-decreasingly and a given
  number $1\le n\le N$, if the spectral gap $\lambda_g :=
  \lambda_{n+1}-\lambda_{n}$ is positive, the density matrix associated
  with $H$ and $n$ is defined to be the orthogonal projector onto the
  span of the first $n$ eigenvectors of $H$. 
\end{definition}

\begin{remark}
In this paper, all density matrices are idempotent, i.e. $P^2=P$. 
When the context is clear, we may drop the dependence on $H$ and $n$ and simply refer to an orthogonal projector $P$ as a
density matrix.
We also let $\mathcal{D}=\mathcal{D}_{\mathbb{C}} \subset
\mathbb{C}^{N\times N}$ denote the set of rank-$n$ density matrices.
\end{remark}

Using the density matrix, the compressed matrix $\wt{B}[V]$ can be
expressed as
\begin{equation}
  \wt{B}[V] = BV(V^{*}BV)^{-1}V^{*}B = B(PBP)^{\dagger}B,
  \label{eqn:Bproj}
\end{equation}
where $(PBP)^{\dagger}$ is the Moore-Penrose
pseudoinverse~\cite{GolubVan2013} of the rank-$n$ matrix $PBP$. 

We elucidate the second equality of \eqref{eqn:Bproj} by examining
its block structure in the matrix representation, as this perspective will be convenient in future developments. Denote by $\{v_i\}_{i=1}^N$ a completion of $\{v_i\}_{i=1}^n$ to an
orthonormal basis of $\mathbb{C}^N$. For any $1\le m\le N$, define
\begin{equation}\label{eqn:subspaceBasisMatrix}
  V_{m} = [v_{1},\ldots, v_{m}].
\end{equation}
In particular, $V_{N}=[v_{1},\ldots,v_{N}]$ consists of all eigenvectors,
and $V\equiv V_{n}=[v_{1},\ldots, v_{n}]$ consists of the eigenvectors
to be computed.
The matrix representation
of $PBP$ with
respect to the basis $V_{N}$ is given in the block form by
\begin{equation}
[PBP]_{V_N} =  \left(\begin{array}{cc}
V^{*}BV & 0\\
0 & 0
\end{array}\right),
\end{equation}
where the size of the upper-left block is $n\times n$. 
Thus the matrix representation of the pseudoinverse $(PBP)^{\dagger}$ is 
\begin{equation}
[(PBP)^{\dagger}]_{V_N} = \left(\begin{array}{cc}
(V^{*}BV)^{-1} & 0\\
0 & 0
\end{array}\right).
\label{eqn:pseudomatrix}
\end{equation}
Hence $(PBP)^\dagger = V(V^* BV)^{-1}V^{*}$, which implies the second equality of \eqref{eqn:Bproj}.
Eq.~\eqref{eqn:Bproj} suggests that 
$\wt{B}$ is a matrix function of the density matrix $P$, or
equivalently, a function of the subspace $\mathrm{Im}(P) =
\mathrm{span}~V$. With some abuse of notation, we will not distinguish between
$\wt{B}[V]$ and $\wt{B}[P]$, and we will mostly use
the projector formulation $\wt{B}[P]$ in the discussion below.

Denote by
\begin{equation}
  [B]_{V_{N}} = \left(\begin{array}{cc}
B_{11} & B_{12}\\
B_{12}^{*} & B_{22}
\end{array}\right)
\label{eqn:Bblock}
\end{equation}
the matrix representation of $B$, and
$B_{11} = V^{*}BV$. Then \eqref{eqn:Bproj} and \eqref{eqn:pseudomatrix} 
give the matrix representation of $\wt{B}[P]$ as
\begin{equation}
  \left[\wt{B}[P]\right]_{V_{N}} = \left(\begin{array}{cc}
B_{11} & B_{12}\\
B_{12}^{*} & B_{12}^{*}B_{11}^{-1}B_{12}
\end{array}\right).
\label{eqn:tildeBblock}
\end{equation}
Note that only the lower-right matrix block is changed in the adaptive
compression.

\begin{remark}[Smoothness of adaptive compression]\label{rem:compressionContinuity}
We can rewrite \eqref{eqn:Bproj} as 
\[
\wt{B}[P] = B\left[PBP + (I-P)\right]^{-1} B - B(I-P)B,
\]
from which it is clear that $\wt{B}$ is smooth (in particular, continuous) as a function $P \mapsto \wt{B}[P]$ on the set of density matrices.
\end{remark}

Our consideration of adaptive compression is motivated by the following fact:

\begin{proposition}[Axiomatic characterization of adaptive compression, I]
Let $B\in\mathbf{H}_{N}$ be negative definite, and let $P$ be a rank-$n$ orthogonal projector. Then $\wt{B}[P]$ is the
unique Hermitian matrix $B'$ satisfying $B'\vert_{\mathrm{Im}(P)} \equiv B\vert_{\mathrm{Im}(P)}$ and $\rank B' \leq n$. (In
fact, $\mathrm{rank}(\wt{B}[P])=n$.)
\label{prop:axiomaticI}
\end{proposition}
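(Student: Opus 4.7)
The plan is to work entirely in the orthonormal basis $V_N$ obtained by completing an orthonormal basis of $\mathrm{Im}(P)$ to all of $\mathbb{C}^N$, in which block structure makes both the agreement condition and the rank condition transparent. Writing $[B]_{V_N}$ as in Eq.~\eqref{eqn:Bblock} and any candidate Hermitian matrix as
\[
[B']_{V_N} = \begin{pmatrix} C_{11} & C_{12} \\ C_{12}^* & C_{22} \end{pmatrix},
\]
I will reduce the proposition to a simple identification of the blocks $C_{ij}$.

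First I would translate the agreement condition $B'|_{\mathrm{Im}(P)} \equiv B|_{\mathrm{Im}(P)}$ into the statement $B' V = B V$, which means that the first $n$ columns of $[B']_{V_N}$ coincide with those of $[B]_{V_N}$. This forces $C_{11} = B_{11}$ and $C_{12}^* = B_{12}^*$, hence by Hermiticity $C_{12} = B_{12}$ as well; only the lower-right block $C_{22}$ (which must be Hermitian) remains unconstrained at this stage.

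Next I would impose the rank condition. Because $B \prec 0$, the block $B_{11} = V^*BV$ is negative definite and in particular invertible, so the first $n$ columns of $[B']_{V_N}$ are linearly independent; this already gives $\mathrm{rank}(B') \geq n$. The constraint $\mathrm{rank}(B') \leq n$ therefore forces the last $N-n$ columns to lie in the column span of the first $n$. Thus there exists a matrix $Y \in \mathbb{C}^{n \times (N-n)}$ with
\[
\begin{pmatrix} B_{12} \\ C_{22} \end{pmatrix} = \begin{pmatrix} B_{11} \\ B_{12}^* \end{pmatrix} Y.
\]
The top block determines $Y = B_{11}^{-1} B_{12}$ uniquely, and substitution into the bottom block yields $C_{22} = B_{12}^* B_{11}^{-1} B_{12}$. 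Comparing with Eq.~\eqref{eqn:tildeBblock} shows $B' = \wt{B}[P]$, establishing both existence (taking $B' = \wt{B}[P]$) and uniqueness, as well as $\mathrm{rank}(\wt{B}[P]) = n$ from the observation above.

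There is no real obstacle here; the argument is a direct block-matrix computation once one recognizes that the freedom left by the agreement condition is precisely the lower-right Hermitian block, and that the rank constraint is a Schur-complement-type relation pinning down that block. The only point that requires the hypothesis $B \prec 0$ (as opposed to some weaker nondegeneracy) is the invertibility of $B_{11}$, which is what allows the system for $Y$ to have a unique solution; this is also the only place where the proof would need to be revisited under the relaxation discussed in Section~\ref{sec:definiteReduction}.
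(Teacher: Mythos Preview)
Your proof is correct and follows essentially the same approach as the paper: both work in the block representation with respect to $V_N$, use the agreement condition to fix the $B_{11}$ and $B_{12}$ blocks, invoke the invertibility of $B_{11}$ (from $B\prec 0$) to get $\mathrm{rank}(B')\ge n$, and then argue that the rank constraint forces the remaining columns to be the unique linear combinations of the first $n$, yielding $C_{22}=B_{12}^* B_{11}^{-1} B_{12}$. Your explicit introduction of the coefficient matrix $Y$ makes the last step slightly more transparent, but the argument is otherwise identical to the paper's.
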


\begin{proof}
We have already established that $\wt{B}[P]$ satisfies the stated properties, so we need only prove uniqueness.

To this end, suppose that $B'$ is a matrix satisfying the stated properties, so $B'$ is Hermitian, has rank at most $n$, and agrees with $B$ on $\mathrm{Im}(P)$. As in the preceding discussion, let $v_1,\ldots,v_N$ be an orthonormal basis for $\mathbb{C}^N$, with $v_1,\ldots,v_n$ forming an orthonormal basis for $\mathrm{Im}(P)$. With $V_N$ as in \eqref{eqn:subspaceBasisMatrix}, write the matrix of $B'$ in this basis:
\[
[B']_{V_N} = \left(\begin{array}{cc}
B_{11}' & B_{12}'\\
B_{12}'^{*} & B_{22}'
\end{array}
\right).
\]
where the upper-left block is $n\times n$. Since $B'$ must agree with $B$ on $v_1,\ldots,v_n$, we must have $B_{11}' = B_{11}$ and $B_{12}' = B_{12}$, where the $B_{ij}$ are as in \eqref{eqn:Bblock}. In summary,
\begin{equation}\label{eqn:BprimeBlock}
[B']_{V_N} = \left(\begin{array}{cc}
B_{11} & B_{12}\\
B_{12}^{*} & *
\end{array}
\right).
\end{equation}

Since $B_{11} = V^* B V$ is invertible (where $V=[v_1, \ldots,v_n]$),
the first $n$ columns of $[B']_{V_N}$ must be linearly independent. This
means that the rank of $B'$ is at least $n$, hence equal to $n$. Then
for any $j=1,\ldots,N-n$, the $(n+j)$-th column of $[B']_{V_N}$ must be
a linear combination of the first $n$ columns. However, the coefficients
of this linear combination are completely determined by $B$, since by
\eqref{eqn:BprimeBlock} the $j$-th column of $B_{12}$ is a linear
combination of the columns of $B_{11}$ with \textit{these same
coefficients}. By the linear independence of the columns of $B_{11}$,
there is exactly one way to write each column of $B_{12}$ as a linear
combination of columns of $B_{11}$, i.e. Eq.~\eqref{eqn:tildeBblock}.
\end{proof}

\begin{remark}
For Proposition \ref{prop:axiomaticI} (and indeed for the entire discussion of Section \ref{sec:properties} thus far), it is not necessary to assume that $B \prec 0$. In fact, it is sufficient to assume that $B$ is Hermitian and $V^* B V$ is invertible. (Note that there exist invertible Hermitian matrices such that $V^* B V$ is not invertible, though this cannot happen if $B$ is definite.)  However, the case of definite $B$ affords adaptive compression with additional properties (see Lemma \ref{lem:negdef}) that are crucial for the utility of adaptive compression in solving eigenvalue problems. As discussed in Section \ref{sec:definiteReduction}, when $B$ is indefinite, the appropriate generalization of adaptive compression for the purpose of solving the eigenvalue problem \eqref{eqn:lineig} does not involve performing adaptive compression on $B$ directly, but rather reduces to the case of definite $B$ by subtracting a multiple of the identity.
\end{remark}

Before proceeding, we state a linear-algebraic result on Schur complements that will be useful for understanding the adaptive compression.
\begin{lemma}
\label{lem:Schurfact}
The positive semidefiniteness \textnormal{[}resp., definiteness\textnormal{]} of a Hermitian matrix $M:= \left(\begin{array}{cc}
X & Y\\
Y^{*} & Z
\end{array}\right)$ (where $X$ is invertible) is equivalent to having both $X\succeq0$ and $S:=Z-Y^{*}X^{-1}Y\succeq0$ \textnormal{[}resp., $\succ 0$\textnormal{]}. In fact, if $M \succeq t \geq 0$, then $S \succeq t$ as well.
\end{lemma}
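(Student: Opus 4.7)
The plan is to prove this via a block $LDL^*$-type factorization (a congruence transformation) and then handle the quantitative statement with a direct test-vector argument.

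First, I would verify by direct multiplication the identity
\[
M \;=\; L \begin{pmatrix} X & 0 \\ 0 & S \end{pmatrix} L^{*}, \qquad L := \begin{pmatrix} I & 0 \\ Y^{*} X^{-1} & I \end{pmatrix},
\]
which uses only the invertibility of $X$ and the Hermiticity of $M$ (so that $X$ is Hermitian and $Z$ is Hermitian). Since $L$ is unit lower-triangular and hence invertible, this is a congruence, so $M \succeq 0$ (resp. $\succ 0$) if and only if $\mathrm{diag}(X,S) \succeq 0$ (resp. $\succ 0$), which in turn is equivalent to both $X \succeq 0$ and $S \succeq 0$ (resp. $\succ 0$). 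This immediately gives both parts of the qualitative equivalence in the lemma statement.

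For the quantitative refinement, suppose $M \succeq t I$ with $t \geq 0$. Given any vector $w$ of appropriate dimension, I would test $M$ against the vector $u := \begin{pmatrix} -X^{-1} Y w \\ w \end{pmatrix}$. A direct computation gives
\[
M u \;=\; \begin{pmatrix} -Y w + Y w \\ -Y^{*} X^{-1} Y w + Z w \end{pmatrix} \;=\; \begin{pmatrix} 0 \\ S w \end{pmatrix},
\]
so $u^{*} M u = w^{*} S w$. On the other hand, $M \succeq t I$ yields
\[
w^{*} S w \;=\; u^{*} M u \;\geq\; t \, \|u\|^{2} \;=\; t\bigl( \|X^{-1} Y w\|^{2} + \|w\|^{2} \bigr) \;\geq\; t \|w\|^{2},
\]
where in the last inequality we used $t \geq 0$ to discard the nonnegative term. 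Since $w$ was arbitrary, we conclude $S \succeq t I$.

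There is no real obstacle here; the main thing to be careful about is that the quantitative bound requires $t \geq 0$ so that dropping the $\|X^{-1} Y w\|^{2}$ contribution is valid. The qualitative statement about positive definiteness drops out of the same factorization because congruence by an invertible matrix preserves inertia, so no separate argument is needed for the strict case.
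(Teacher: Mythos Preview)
Your proof is correct and essentially matches the paper's: the paper simply cites the qualitative equivalence as a standard Schur-complement fact (which you prove via the block $LDL^*$ congruence), and for the quantitative bound $M \succeq t \Rightarrow S \succeq t$ both you and the paper use the identical test vector $u = \begin{pmatrix} -X^{-1}Yw \\ w \end{pmatrix}$ to obtain $w^* S w = u^* M u \geq t\|u\|^2 \geq t\|w\|^2$.
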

\begin{remark}
Note that $S$ is a Schur complement. The first statement of Lemma \ref{lem:Schurfact} is a standard result in linear algebra (see, e.g., Theorem 1.12 of \cite{ZhangSchur}). The last statement is less widely-known, so we include an elementary proof here for completeness.
\end{remark}
\begin{proof}
We only prove the last statement. Assume that $M \succeq t$. Define
\[
F(u,v) := 
\left(\begin{array}{c}
u \\
v
\end{array}\right)^*
M
\left(\begin{array}{c}
u \\
v
\end{array}\right) = u^* X u + u^* Y v + v^* Y^* u + v^* Z v.
\]
Observe that 
\[
F( -X^{-1} Y v, v) = v^* Y^* X^{-1} Y v - v^* Y^* X^{-1} Y v - v^* Y^* X^{-1} Y v + v^* Z v = v^* S v.
\]
Using the previous two equalities and the fact that $M \succeq t \geq 0$, observe that for any $v$,
\[
v^* S v = \left(\begin{array}{c}
-X^{-1} Y v \\
v
\end{array}\right)^*
M
\left(\begin{array}{c}
-X^{-1} Y v \\
v
\end{array}\right) \geq t \left \Vert \left(\begin{array}{c}
-X^{-1} Y v \\
v
\end{array}\right) \right \Vert_2^2 \geq t \Vert v\Vert_2^2. 
\]
This completes the proof via the Courant-Fischer minimax theorem~\cite{GolubVan2013}.
\end{proof}

Taking $B$ to be negative definite, it follows from \eqref{eqn:Bproj}
that $\wt{B}[P]$ is negative semidefinite, i.e., $\wt{B}[P] \preceq 0$.
Since $\wt{B}[P]$ is a low-rank substitute for the negative definite
matrix $B$, one might additionally hope that the compression does not
make $B$ ``more negative'' in any direction, i.e. $\wt{B}[V]
\succeq B$. Lemma~\ref{lem:negdef} shows that this is indeed the case.

\begin{lemma}\label{lem:negdef}
  Let $B\in\CC^{N\times N}$ be a negative definite matrix. For any rank-$n$ projector $P$, the matrix $\wt{B}[P]-B$ is positive semidefinite. Therefore $B \preceq \wt{B}[P] \preceq 0$.
\end{lemma}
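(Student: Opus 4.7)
The plan is to exploit the block-matrix expressions for $B$ and $\wt{B}[P]$ already derived in equations \eqref{eqn:Bblock} and \eqref{eqn:tildeBblock}, and then to invoke the Schur complement machinery of Lemma \ref{lem:Schurfact}. The upper inequality $\wt{B}[P]\preceq 0$ should fall out of the explicit formula \eqref{eqn:Bproj}, while the lower inequality $B \preceq \wt{B}[P]$ should reduce to the positive definiteness of a Schur complement of $-B$.

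First I would prove $\wt{B}[P] \preceq 0$ directly from \eqref{eqn:Bproj}, which gives $\wt{B}[P] = BV(V^*BV)^{-1}V^*B$. Since $B\prec 0$, the $n\times n$ matrix $V^*BV$ is negative definite, so $(V^*BV)^{-1}$ is also negative definite. For any $x\in\mathbb{C}^N$, setting $y = V^*Bx$ yields $x^*\wt{B}[P]x = y^*(V^*BV)^{-1}y \leq 0$, which is the desired conclusion.

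Next I would prove $B \preceq \wt{B}[P]$ by subtracting \eqref{eqn:Bblock} from \eqref{eqn:tildeBblock} in the basis $V_N$, obtaining
\[
\bigl[\wt{B}[P] - B\bigr]_{V_N} \;=\; \begin{pmatrix} 0 & 0 \\ 0 & B_{12}^*B_{11}^{-1}B_{12} - B_{22}\end{pmatrix}.
\]
Hence positive semidefiniteness of $\wt{B}[P]-B$ is equivalent to showing that $B_{12}^*B_{11}^{-1}B_{12} - B_{22}\succeq 0$, which is, up to a sign, precisely a Schur complement of $B$. I would apply Lemma \ref{lem:Schurfact} to the positive definite matrix $-B$, whose blocks relative to $V_N$ are $-B_{11}$, $-B_{12}$, $-B_{22}$. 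Its Schur complement is
\[
(-B_{22}) - (-B_{12})^*(-B_{11})^{-1}(-B_{12}) \;=\; -B_{22} + B_{12}^*B_{11}^{-1}B_{12},
\]
which by the lemma is positive definite (in particular positive semidefinite). This is exactly the lower-right block above, finishing the proof.

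The argument is essentially a bookkeeping exercise once the block representations \eqref{eqn:Bblock} and \eqref{eqn:tildeBblock} are in hand, so there is no real obstacle; the only point requiring care is tracking the signs when applying Lemma \ref{lem:Schurfact} to $-B$ rather than $B$ (since the lemma is stated for positive semidefinite matrices), and recognizing that the lower-right block of $\wt{B}[P] - B$ is exactly the Schur complement of $-B$ that the lemma certifies as positive definite.
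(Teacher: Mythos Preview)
Your proof is correct and follows essentially the same route as the paper: both compute the block difference $[\wt{B}[P]-B]_{V_N}$ from \eqref{eqn:Bblock} and \eqref{eqn:tildeBblock} and then invoke Lemma~\ref{lem:Schurfact} to conclude that the Schur complement $B_{12}^*B_{11}^{-1}B_{12}-B_{22}$ is positive semidefinite. Your treatment is slightly more explicit about applying Lemma~\ref{lem:Schurfact} to $-B$ (tracking the signs) and about the direct verification of $\wt{B}[P]\preceq 0$, but the substance is identical.
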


\begin{proof}
Note from \eqref{eqn:Bblock} and \eqref{eqn:tildeBblock} that 

\begin{equation}\label{eqn:differenceSchur}
\left[\wt{B}[V] - B\right]_{V_N} = \left(\begin{array}{cc}
0 & 0\\
0 & B_{12}^{*}B_{11}^{-1}B_{12} - B_{22}
\end{array}\right),
\end{equation}
so Lemma \ref{lem:negdef} is equivalent to the statement that $B_{12}^{*}B_{11}^{-1}B_{12} - B_{22} \succeq 0$, i.e., that the Schur complement $B_{22}-B_{12}^{*}B_{11}^{-1}B_{12}$ is negative semidefinite. But this follows from Lemma \ref{lem:Schurfact}, together with the fact that $B$ is negative definite.
\end{proof}

\begin{remark}
Observe that Theorem \ref{thm:main1} follows directly from Proposition \ref{prop:axiomaticI} and \ref{lem:negdef}.
Lemma \ref{lem:negdef} will also be key for proving the convergence of Algorithm \ref{alg:acelineig}.
\end{remark}

The Schur complement perspective on adaptive compression yields further
insights. Note that the stipulation that $\wt{B}[V]$ agrees with $B$ on
$\mathrm{span}~V$ determines the upper-left and upper-right blocks of
$\wt{B}[V]$ as in \eqref{eqn:tildeBblock}, and the stipulation that
$\wt{B}[V]$ is Hermitian then fixes the lower-left block. The only thing
that then remains to be specified is the lower-right block, which is
identified as in $\wt{B}[V]$.
This suggests the following characterization of adaptive compression:

\begin{proposition}[Axiomatic characterization of adaptive compression, II]
Let $B\in\CC^{N\times N}$ be a negative definite matrix, and let $E$ be an $n$-dimensional subspace of $\mathbb{C}^{N}$. Then $\wt{B}[E]$ is the
maximal Hermitian negative semidefinite matrix $B'$ satisfying $B'\vert_{E} \equiv B\vert_{E}$, in the sense that for any other such $B'$, we have $B' \preceq \wt{B}[E]$.
\label{prop:axiomaticII}
\end{proposition}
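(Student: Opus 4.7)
My plan is to reduce the claim to a Schur complement calculation using the block representation already set up in the paper.

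First, I would choose an orthonormal basis $v_1, \ldots, v_N$ of $\mathbb{C}^N$ such that $v_1, \ldots, v_n$ span $E$, and write every Hermitian matrix in this basis as a $2\times 2$ block matrix with an $n\times n$ upper-left block, just as in \eqref{eqn:Bblock} and \eqref{eqn:tildeBblock}. Suppose $B'$ is any Hermitian negative semidefinite matrix with $B'|_E \equiv B|_E$. Since $B'V = BV$ for $V = [v_1,\ldots,v_n]$, the first $n$ columns of $[B']_{V_N}$ coincide with those of $[B]_{V_N}$; by Hermiticity the first $n$ rows also agree, so
\[
[B']_{V_N} = \begin{pmatrix} B_{11} & B_{12} \\ B_{12}^* & B_{22}' \end{pmatrix}
\]
for some Hermitian $B_{22}'$. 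The only freedom in $B'$ is in the lower-right block.

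Next, I would translate the negative semidefiniteness of $B'$ into a condition on $B_{22}'$ via Lemma \ref{lem:Schurfact} applied to $-B'$. Since $-B_{11} = -V^*BV \succ 0$ (using $B \prec 0$), Lemma \ref{lem:Schurfact} says that $-B' \succeq 0$ is equivalent to the Schur complement condition
\[
-B_{22}' - (-B_{12})^*(-B_{11})^{-1}(-B_{12}) = -B_{22}' + B_{12}^* B_{11}^{-1} B_{12} \succeq 0,
\]
i.e. $B_{22}' \preceq B_{12}^* B_{11}^{-1} B_{12}$. But by \eqref{eqn:tildeBblock} the right-hand side is exactly the $(2,2)$-block of $\wt{B}[E]$.

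Finally, subtracting,
\[
[\wt{B}[E] - B']_{V_N} = \begin{pmatrix} 0 & 0 \\ 0 & B_{12}^* B_{11}^{-1} B_{12} - B_{22}' \end{pmatrix} \succeq 0,
\]
which gives $B' \preceq \wt{B}[E]$. Since $\wt{B}[E]$ itself is Hermitian, negative semidefinite (Lemma \ref{lem:negdef}), and agrees with $B$ on $E$, it lies in the admissible class, so it is the maximum.

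There is no real obstacle here: the only subtlety is keeping the sign conventions straight when applying Lemma \ref{lem:Schurfact} to $-B'$ rather than $B'$, and verifying that the invertibility hypothesis holds (which is immediate from $B \prec 0$). Everything else is a direct block-matrix calculation parallel to the proof of Lemma \ref{lem:negdef}.
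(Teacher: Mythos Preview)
Your proof is correct and follows essentially the same approach as the paper: both reduce to the block decomposition in the chosen basis, observe that only the $(2,2)$-block is free, and use Lemma~\ref{lem:Schurfact} on the Schur complement to conclude $B_{22}' \preceq B_{12}^* B_{11}^{-1} B_{12}$, whence $B' \preceq \wt{B}[E]$. Your version is slightly more explicit about the sign conventions when applying Lemma~\ref{lem:Schurfact} (to $-B'$) and about checking that $\wt{B}[E]$ itself lies in the admissible class, but the argument is the same.
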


\begin{proof}
Suppose that $B' \preceq 0$ with $B' \preceq \wt{B}[E]$. Let
$V_N=[v_1,\ldots,v_N]$ be an orthonormal basis for $\mathbb{C}^N$, with $v_1,\ldots,v_n$ forming an orthonormal basis for $E$. As in the proof of Proposition \ref{prop:axiomaticI}, the matrix of $B'$ in this basis is of the form
\[
[B']_{V_N} = \left(\begin{array}{cc}
B_{11} & B_{12}\\
B_{12}^{*} & Z
\end{array}
\right),
\]
where the $B_{ij}$ are as in \eqref{eqn:Bblock}. Since $B'$ is negative semidefinite, by Lemma \ref{lem:Schurfact} the Schur complement $Z - B_{12}^* B_{11}^{-1} B_{12}$ is also negative semidefinite, i.e., $Z \preceq B_{12}^* B_{11}^{-1} B_{12}$. But by \eqref{eqn:tildeBblock} this implies that $B' \preceq \wt{B}[E]$.
\end{proof}

Similar results hold for the $t$-shifted adaptive compression.
For $B$ Hermitian, $t > \lambda_{\max} (B)$, and a rank-$n$ projector
$P$, define $\wt{B}[P,t] = \wt{B_t}[P] + t$, where $B_t = B - t$. Then we have
\begin{enumerate}
\item $B$ and $\wt{B}[P,t]$ agree on the image of $P$ denoted by $\mathrm{Im}(P)$.
\item $B \preceq \wt{B}[P,t] \preceq t$.
\item $\wt{B}[P,t]$ is not of rank $n$, but $\wt{B}[P,t]$ is the sum of
  a rank-$n$ matrix and a multiple of the identity, and hence is
  computationally efficient to apply. 
\end{enumerate}

\section{Local convergence analysis}\label{sec:local}

Since each step of \eqref{eqn:acelineig} is an Hermitian eigenvalue
problem, we can require $V^{(k)}$ to be columns of a unitary matrix and
let $P^{(k)} = V^{(k)} (V^{(k)})^*$. Then let $P^{(k+1)}$ be the density
matrix associated with $A+\wt{B}[P^{(k)}]$.  The convergence of the
adaptive compression method for the linear problem~\eqref{eqn:lineig}
can be stated in terms of the convergence of the density matrix
$P^{(k)}\to P$.
For any $H\in \mathbf{H}_{N}$, let $\lambda_i\left\{ H\right\}$ denote
its $i$-th smallest eigenvalue (counting multiplicity). In this
notation, the true eigenvalues of $(A+B)$ are $\lambda_i \equiv \lambda_i\left\{A+B \right\}$.

We now formally define the \textit{fixed point iteration map}
$\mf{F}(\cdot)$ as follows.

\begin{definition}\label{def:IterOp}
For a density matrix $Q$, let $\mf{F} (Q)$ be the orthogonal projector
$\sum_{i=1}^n u_i u_i ^*$, where the $u_i$ are orthonormal
eigenfunctions of 
\[
\big(A+\wt{B}[Q]\big) u_i = \lambda_i \big\{A+\wt{B}[Q]\big\} u_i.
\]
$\mf{F}(Q)$ is canonically defined if $A+\wt{B}[Q]$ has a positive spectral gap, such a
projector is unique, and it is the density matrix associated with
$A+\wt{B}[Q]$. Otherwise, we make an arbitrary choice in the eigenspace
associated with $\lambda_{n}\big\{A+\wt{B}[Q]\big\}$ so that
$\mf{F}(Q)$ is of rank $n$.
\end{definition}

Using the fixed point iteration map, we can rephrase Algorithm~\ref{alg:acelineig} compactly as
\begin{equation}
	P^{(k+1)}:=\mf{F} (P^{(k)} ).
  \label{eqn:fixedPt}
\end{equation}
We will see below that for $Q$ sufficiently close to $P$ (the true
density matrix of $A+B$), $A+\wt{B}[Q]$ has a positive spectral gap, and
hence its density matrix is indeed canonically defined. Thus, for all $Q$ in a
neighborhood of $P$, $\mf{F}(Q)$ is the density matrix associated with
$A+\wt{B}[Q]$. 
The local convergence of Algorithm \ref{alg:acelineig} can be studied
via the properties of the map $\mf{F}$ near the true density matrix $P$. 
A necessary requirement for Algorithm~\ref{alg:acelineig} to converge is the consistency condition
\begin{equation}
  \mf{F} (P) = P.
  \label{eqn:consistency}
\end{equation}
In order to guarantee local linear convergence, the spectral radius for
the Jacobian of $\mf{F}$ must also be bounded by unity, so that the
fixed point $P$ is attractive with respect to the
iteration~\eqref{eqn:fixedPt}.  
This leads to a sharp estimate of the
local convergence rate, which is upper-bounded by the rate provided in Theorem~\ref{thm:main2}.

\subsection{Consistency}

For a general Hermitian $B\in\mathbf{H}_{N}$, even if $\wt{B}$ is
constructed from the true density matrix $P$, the true eigenvectors
$\{v_1,\ldots,v_n\}$ of $A+B$ may not correspond to the \textit{lowest}
$n$ eigenvalues of the modified operator $A+\wt{B}[P]$, despite the fact
that they are still eigenvectors of the modified operator. In such case,
the consistency requirement~\eqref{eqn:consistency} is violated.

However, the consistency condition of the fixed point iteration will be satisfied when $B\prec 0$. 
Lemma \ref{lem:negdef} implies that $\wt{B}[P]-B \succeq 0$. Thus 
replacing $B$ by $\wt{B}[P]$ just means adding a positive
semidefinite operator that is zero on $\mathrm{span}~V$. This keeps
the bottom $n$ eigenvalues intact and shift the rest of the eigenvalues
upwards.  Lemma~\ref{lem:orderzero} verifies this statement, which
implies Eq.~\eqref{eqn:consistency}.

\begin{lemma}\label{lem:orderzero}
  Let $P$ be the density matrix associated with $A+B$, and $\wt{P}$ be the
  density matrix associated with $A+\wt{B}[P]$. Then 
  \[
  \lambda_i  \big\{ A+\wt{B}[P]\big\} \geq \lambda_i = \lambda_i  \left\{ A+B \right\}
  \]
 for $i=1,\ldots,N$, with equality if $i\leq n$. Moreover, $\wt{P}=P$.
 \vspace{1mm}
\end{lemma}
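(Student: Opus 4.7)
The proof proposal is essentially to combine Lemma \ref{lem:negdef} with Weyl's monotonicity theorem and the Courant--Fischer min-max characterization of eigenvalues.

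First, I would establish the inequality $\lambda_i\{A+\wt{B}[P]\} \geq \lambda_i\{A+B\}$ for all $i$. By Lemma \ref{lem:negdef}, $\wt{B}[P] - B \succeq 0$, so
\[
A+\wt{B}[P] = (A+B) + \big(\wt{B}[P]-B\big),
\]
is a positive semidefinite perturbation of $A+B$. Weyl's monotonicity theorem (an immediate consequence of the min-max principle) then gives the desired inequality on every eigenvalue.

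Next, to upgrade this to equality for $i \leq n$, I would use the fact that $\wt{B}[P]$ agrees with $B$ on $\mathrm{Im}(P)$ (by the defining property, established in Proposition \ref{prop:axiomaticI}). Writing $V=[v_1,\ldots,v_n]$ for the orthonormal eigenvectors spanning $\mathrm{Im}(P)$, we have
\[
\big(A+\wt{B}[P]\big)v_i = A v_i + B v_i = \lambda_i v_i, \quad i=1,\ldots,n,
\]
so $v_1,\ldots,v_n$ span an $n$-dimensional invariant subspace of $A+\wt{B}[P]$ on which it acts with eigenvalues $\lambda_1,\ldots,\lambda_n$. For $i \leq n$, applying the min-max characterization with the trial subspace $\mathrm{span}(v_1,\ldots,v_i)$ gives
\[
\lambda_i\big\{A+\wt{B}[P]\big\} \leq \max_{1 \leq j \leq i} \lambda_j = \lambda_i,
\]
using that the $\lambda_j$ are non-decreasing. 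Together with the reverse inequality from step one, this yields equality.

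Finally, for $\wt{P}=P$, I would invoke the spectral gap of $A+B$: since $\lambda_n\{A+\wt{B}[P]\} = \lambda_n < \lambda_{n+1} \leq \lambda_{n+1}\{A+\wt{B}[P]\}$, the operator $A+\wt{B}[P]$ also has a positive spectral gap between its $n$-th and $(n+1)$-st eigenvalues, so its bottom-$n$ eigenspace is unambiguously defined. That eigenspace must be precisely $\mathrm{span}(v_1,\ldots,v_n) = \mathrm{Im}(P)$, since we have already exhibited $n$ orthonormal vectors in this subspace achieving the lowest $n$ eigenvalues. Hence $\wt{P} = VV^* = P$. The argument is essentially straightforward once Lemma \ref{lem:negdef} is in hand; the only conceptual point requiring care is the min-max argument that rules out the eigenvalues going strictly up for $i \leq n$, but this is immediate once one notes the $v_i$ remain eigenvectors of the modified operator.
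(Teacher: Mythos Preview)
Your proof is correct and follows essentially the same approach as the paper: both combine Lemma~\ref{lem:negdef} with the Courant--Fischer min-max principle (equivalently, Weyl monotonicity) to obtain $\lambda_i\{A+\wt{B}[P]\}\ge\lambda_i$, then use the fact that the $v_i$ remain eigenvectors of $A+\wt{B}[P]$ to force equality for $i\le n$. Your treatment of $\wt{P}=P$ is slightly more explicit than the paper's in invoking the spectral gap to ensure the bottom-$n$ eigenspace is unambiguously $\mathrm{Im}(P)$, but this is a presentational difference only.
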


\begin{proof}
  Eq.~\eqref{eqn:Bconsistency} implies that
  $\{(\lambda_{i},v_{i})\}_{i=1}^{n}$ are eigenpairs
  of $A+\wt{B}[P]$. Hence it is sufficient to show that
  $\{\lambda_{i}\}_{i=1}^{n}$ are also the lowest $n$ eigenvalues.

  The Courant-Fischer minimax theorem~\cite{GolubVan2013} and
  Lemma~\ref{lem:negdef} give
  \begin{equation}
    \begin{split}
      \lambda_i  \left\{ A+B \right\}
 &= \min_{\mathrm{dim}S=i} \max_{0\ne u\in S}
      \frac{u^{*} (A+\wt{B}[P])u}{u^{*}u} \\
      &= \min_{\mathrm{dim}S=i} \max_{0\ne u\in S}
      \left(\frac{u^{*} (A+B)u}{u^{*}u} + \frac{u^{*}
      (\wt{B}[P]-B)u}{u^{*}u}\right)\\
      &\ge \min_{\mathrm{dim}S=i} \max_{0\ne u\in S}
      \frac{u^{*} (A+B)u}{u^{*}u} = \lambda_{i}.
    \end{split}
    \label{eqn:orderzero_2}
  \end{equation}
  Since
  $\{\lambda_{i}\}_{i=1}^{n}$ are already eigenvalues, the only
  possibility is that $\lambda_{i}=\lambda_i  \left\{ A+B \right\}$ for $1\le i\le n$,
  and hence $P=\wt{P}$.
\end{proof}

We now verify that $\mf{F}$ is canonically defined for density matrices in a neighborhood of $P$. 
This amounts to proving that $A+\wt{B}[Q]$ has a spectral gap for
density matrices $Q$ sufficiently close to $P$. By Lemma
\ref{lem:orderzero}, the spectral gap of $A+\wt{B}[P]$ is at least as
large as the spectral gap of $A+B$ denoted by $\lambda_g$. In particular, the
spectral gap of $A+\wt{B}[P]$ is positive. Then since the $k$-th
eigenvalue of a Hermitian matrix $M$ is a Lipschitz function of $M$ 
(see e.g. ~\cite{GolubVan2013}),
and since $\wt{B}[Q]$ is continuous in the
density matrix $Q$ (see Remark \ref{rem:compressionContinuity}),
$A+\wt{B}[Q]$ has a positive spectral gap for density matrices $Q$
sufficiently close to $P$, as claimed.

\subsection{Linearization}\label{subsec:linearLocal}

We study the response of $\mf{F}$ to a small perturbation of $P$ in
two steps. First we determine the change in the density matrix induced
by a small perturbation of the matrix $H=A+\wt{B}[P]$. This gives a Jacobian
denoted by $D P_{H}$. 
Then we describe how $\wt{B}$
(and hence also the matrix $A+\wt{B}$ of the eigenvalue problem in
question in each iteration) responds to the small perturbation of $P$.
This gives a Jacobian $D \wt{B}_{P}$.
The composition of these Jacobian operators yields the Jacobian of $\mf{F}$
at $P$, denoted by $D \mf{F}_{P}$.
In the physics literature for solving Hartree-Fock-like
equations, $D P_{H}$ is called the \textit{irreducible polarizability matrix}.

For any orthogonal projector $Q$, let $Q^\perp := I-Q$ denote the
orthogonal projector onto $\mathrm{Im}(Q)^\perp$. We first give
explicit expressions for $D P_{H}$ and $D \wt{B}_{P}$ in
Lemma~\ref{lem:contour} and~\ref{lem:fB} respectively, for which the
proofs are given in Appendix \ref{appendix:calculations}.

\begin{lemma}\label{lem:contour}
  For $H\in\mathbf{H}_N$ with a positive spectral gap, $\Delta H\in
  \mathbf{H}_N$, $1\le n\le N$ and $\epsilon>0$ sufficiently small, let $P,P_\epsilon$
  be the rank-$n$ density matrices associated with $H$ and
  $H+\epsilon \Delta H$, respectively.
  Then
  \begin{eqnarray*}
    D P_H[\Delta H] & = & \sum_{i=1}^{n}\sum_{a=n+1}^{N} \frac{1}{\mu_{i}-\mu_{a}}
    u_{a} (u_{a}^{*} \Delta H u_{i}) u_{i}^{*} + \mathrm{h.c.}
    \label{eqn:contour_1b} \\
    & = & \sum_{i=1}^{n} \left[P^\perp (\mu_i - H)P^\perp \right]^\dagger \Delta H u_{i} u_{i}^{*} + \mathrm{h.c.},
  \end{eqnarray*}
  where $\mathrm{h.c.}$ stands for the Hermitian conjugate of the term
  that precedes it and $u_1 ,\ldots, u_N$ are orthonormal eigenvectors
  of $H$ with corresponding eigenvalues $\mu_1 \leq \cdots \leq \mu_N$.
  (Note that $\mu_n < \mu_{n+1}$ by assumption.)
\end{lemma}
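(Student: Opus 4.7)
The plan is to use the Cauchy contour-integral representation of the spectral projector $P$, which is the standard device for handling perturbations of rank-$n$ eigenprojectors, and then differentiate under the integral sign. Since $H$ has a positive spectral gap between $\mu_n$ and $\mu_{n+1}$, I can fix a simple closed contour $\Gamma\subset\mathbb{C}$ enclosing $\mu_1,\ldots,\mu_n$ while separating them from $\mu_{n+1},\ldots,\mu_N$. By continuity of eigenvalues in $\epsilon$, for $\epsilon$ small enough none of the eigenvalues of $H+\epsilon\Delta H$ cross $\Gamma$, so the representation
\begin{equation*}
P_\epsilon = \frac{1}{2\pi\I}\oint_\Gamma (zI - H - \epsilon\Delta H)^{-1}\,dz
\end{equation*}
holds uniformly for small $\epsilon$. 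The first-order resolvent identity and uniform boundedness of the resolvent on $\Gamma$ then yield
\begin{equation*}
DP_H[\Delta H] = \frac{1}{2\pi\I}\oint_\Gamma (zI-H)^{-1}\,\Delta H\,(zI-H)^{-1}\,dz.
\end{equation*}

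Next, I would substitute the spectral decomposition $(zI-H)^{-1} = \sum_{j=1}^N (z-\mu_j)^{-1}\,u_j u_j^*$ into both resolvents and interchange the finite sums with the contour integral, obtaining
\begin{equation*}
DP_H[\Delta H] = \sum_{j,k=1}^N c_{jk}\,(u_j^*\,\Delta H\,u_k)\,u_j u_k^*, \qquad c_{jk} := \frac{1}{2\pi\I}\oint_\Gamma \frac{dz}{(z-\mu_j)(z-\mu_k)}.
\end{equation*}
A residue calculation (or a partial-fraction decomposition $\tfrac{1}{(z-\mu_j)(z-\mu_k)} = \tfrac{1}{\mu_j-\mu_k}[\tfrac{1}{z-\mu_j}-\tfrac{1}{z-\mu_k}]$ when $j\ne k$) shows that $c_{jk}=0$ whenever $j=k$ (a double pole with zero residue) or when $\mu_j,\mu_k$ lie on the same side of $\Gamma$. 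The surviving terms are precisely those with exactly one index in $\{1,\ldots,n\}$ and the other in $\{n+1,\ldots,N\}$; grouping them by which side of $\Gamma$ contains $\mu_j$ and recognizing that the two groups are Hermitian conjugates of one another produces the first displayed formula.

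The second formula then follows by re-expressing the inner sum in $a$ operator-theoretically: for each fixed $i\le n$, one has $P^\perp(\mu_i - H)P^\perp = \sum_{a>n}(\mu_i-\mu_a)\,u_a u_a^*$, and the spectral gap guarantees all coefficients $\mu_i-\mu_a$ are nonzero, so the Moore-Penrose pseudoinverse is $\sum_{a>n}(\mu_i-\mu_a)^{-1}\,u_a u_a^*$; multiplying by $\Delta H\, u_i u_i^*$ and summing over $i$ recovers the inner double sum. The only technical point is justifying the uniform representation of $P_\epsilon$ and the exchange of sum and integral, but both are immediate from the compactness of $\Gamma$, the uniform boundedness of the perturbed resolvent on $\Gamma$, and standard Weyl-type eigenvalue perturbation bounds; I do not anticipate any substantive obstacle beyond this bookkeeping.
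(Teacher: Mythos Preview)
Your proposal is correct and follows essentially the same approach as the paper: contour integral representation of the spectral projector, first-order resolvent expansion, spectral decomposition of the resolvent, and a partial-fraction/residue computation to isolate the cross terms, followed by rewriting the inner sum as the pseudoinverse $[P^\perp(\mu_i-H)P^\perp]^\dagger$. If anything, you are slightly more careful than the paper in explicitly treating the diagonal case $j=k$ (double pole with zero residue), which the paper's partial-fraction identity glosses over.
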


\begin{lemma}\label{lem:fB}
  For $\epsilon>0$ sufficiently small, let $P,P_\epsilon$ be density matrices with $\Delta P = \lim_{\epsilon\to
  0} (P_\epsilon- P)/\epsilon$. Then
  \begin{equation}
    D \wt{B}_P[\Delta P]:= \lim_{\epsilon\to 0} \frac{\wt{B}[P_\epsilon]-\wt{B}[P]}{\epsilon} =
     \left(B-\wt{B}[P]\right) (\Delta P)
    (PBP)^\dagger B + \mathrm{h.c.}
    \label{eqn:fB_1}
  \end{equation}
\end{lemma}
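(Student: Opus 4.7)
My approach is a direct differentiation from an explicit parametrization. By Remark~\ref{rem:compressionContinuity}, $\wt{B}$ is smooth in $P$ on $\mathcal{D}$, so the limit in the lemma exists and my task is only to identify it.

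First, I would choose a smooth lift: for $\epsilon$ small, take an $N\times n$ matrix $V_\epsilon$ with orthonormal columns and $V_\epsilon V_\epsilon^* = P_\epsilon$, with $V_0 = V$. I would fix the horizontal gauge $V^*\Delta V = 0$. Differentiating the two constraints $V_\epsilon V_\epsilon^* = P_\epsilon$ and $V_\epsilon^* V_\epsilon = I$ at $\epsilon = 0$ and imposing the gauge condition gives, after multiplying the first on the right by $V$ and using $V^* V = I$, the explicit relation $\Delta V = (\Delta P) V$. Since $\wt{B}$ depends only on $P$, the gauge choice is purely a computational convenience and the final answer cannot depend on it.

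Second, I would apply the product rule to $\wt{B}[P_\epsilon] = B V_\epsilon (V_\epsilon^* B V_\epsilon)^{-1} V_\epsilon^* B$, using $D(X^{-1}) = -X^{-1}\dot X X^{-1}$ for the middle factor, producing four terms at first order in $\epsilon$. Third, I would translate these back to projector language: every occurrence of $V(V^*BV)^{-1}V^*$ is recognized as $(PBP)^\dagger$, and every occurrence of $BV(V^*BV)^{-1}V^*B$ as $\wt{B}[P]$. After substituting $\Delta V = (\Delta P)V$, the four terms reduce to
\[
B(\Delta P)(PBP)^\dagger B \;+\; B(PBP)^\dagger (\Delta P) B \;-\; \wt{B}[P](\Delta P)(PBP)^\dagger B \;-\; B(PBP)^\dagger (\Delta P)\wt{B}[P].
\]

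Fourth, I would combine terms: pairing the first and third terms produces $(B - \wt{B}[P])(\Delta P)(PBP)^\dagger B$, and pairing the second and fourth produces $B(PBP)^\dagger (\Delta P)(B - \wt{B}[P])$. Since $B$, $\wt{B}[P]$, $\Delta P$, and $(PBP)^\dagger$ are all Hermitian, the second of these is the Hermitian conjugate of the first, yielding exactly the claim.

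The calculation is mechanical rather than deep, so the main obstacle is bookkeeping. The two places I expect to need the most care are (i) deriving the gauge relation $\Delta V = (\Delta P) V$ cleanly so that no residual $V(V^*\Delta V)$ terms linger and need to be cancelled later, and (ii) spotting the pairing in the fourth step that isolates the factor $B - \wt{B}[P]$ — this algebraic observation is what collapses four apparently unrelated terms into the compact stated form.
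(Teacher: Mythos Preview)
Your proof is correct and takes a genuinely different route from the paper's. The paper handles the pseudoinverse by the regularization
\[
(P_\epsilon B P_\epsilon)^\dagger = \bigl[P_\epsilon B P_\epsilon + \lambda(I-P_\epsilon)\bigr]^{-1} - \lambda^{-1}(I-P_\epsilon),
\]
differentiates this smooth expression in $\epsilon$, uses the projector identity $P(\Delta P)P = 0$ to kill several terms, and only at the end sends $\lambda\to\infty$. You instead lift the curve $P_\epsilon$ to the Stiefel manifold, so the pseudoinverse becomes the honest inverse $(V_\epsilon^* B V_\epsilon)^{-1}$ with its standard derivative formula, and the horizontal gauge $V^*\Delta V = 0$ gives you $\Delta V = (\Delta P)V$ cleanly; the pattern-matching back to $(PBP)^\dagger$ and $\wt{B}[P]$ is then immediate. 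Your approach is arguably more elementary because it avoids the auxiliary limit in $\lambda$ and is the natural computation in the Stiefel/Grassmannian picture; the paper's approach has the advantage of staying entirely in projector variables and showcasing a general-purpose trick for differentiating pseudoinverses of varying-rank-kernel families. One small point worth making explicit in a final write-up: the existence of a smooth local lift $V_\epsilon$ follows from the fact that the Stiefel manifold is a principal bundle over $\mathcal{D}$, and your appeal to Remark~\ref{rem:compressionContinuity} correctly justifies replacing the given family $P_\epsilon$ by any smooth curve with the same tangent $\Delta P$.
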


The composition of Lemma \ref{lem:contour} with Lemma \ref{lem:fB} gives
an explicit expression for $D \mf{F}_{P}$:

\begin{lemma}\label{lem:composedJacobian}
 For $\epsilon>0$ sufficiently small, let $P,P_\epsilon$ be density matrices with $\Delta P = \lim_{\epsilon\to
  0} (P_\epsilon- P)/\epsilon$. Then
  \begin{eqnarray*}
    D\mf{F}_P[\Delta P] &:=& \lim_{\epsilon\to 0} \frac{\mf{F}(P_\epsilon)-\mf{F}(P)}{\epsilon} \\
    &=& 
    \sum_{i=1}^n
    \left(P^\perp + \left(\wt{B}[P]-B\right)^\dagger \left(A+B- \lambda_i\right) P^\perp \right)^\dagger    
    (\Delta P) v_i v_i^* + \mathrm{h.c.}
  \end{eqnarray*}
\end{lemma}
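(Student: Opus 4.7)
The plan is to apply the chain rule $D\mf{F}_P = DP_H \circ D\wt{B}_P$, where $H = A + \wt{B}[P]$, and then simplify heavily using the consistency condition~\eqref{eqn:Bconsistency}. Lemma~\ref{lem:orderzero} identifies the lowest $n$ eigenpairs of $H$ as $(\lambda_i, v_i)$ for $i = 1,\ldots,n$, so specializing Lemma~\ref{lem:contour} to $H$ yields
\[
DP_H[\Delta H] = \sum_{i=1}^{n} \left[P^{\perp}(\lambda_i - H) P^{\perp}\right]^{\dagger} \Delta H\, v_i v_i^* + \mathrm{h.c.}
\]

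The key reduction is that $D\wt{B}_P[\Delta P]\, v_i = (B - \wt{B}[P])(\Delta P)\, v_i$ for $i \leq n$. Starting from Lemma~\ref{lem:fB}, the Hermitian-conjugate term vanishes on $v_i$ because $(B - \wt{B}[P])\, v_i = 0$, and in the remaining term one computes $(PBP)^{\dagger} B v_i = V(V^*BV)^{-1} V^* B v_i = v_i$, using that $V^* B v_i = (V^*BV)\, e_i$. Substituting this simplification produces
\[
D\mf{F}_P[\Delta P] = \sum_{i=1}^{n} \left[P^{\perp}(\lambda_i - H) P^{\perp}\right]^{\dagger} (B - \wt{B}[P])(\Delta P)\, v_i v_i^* + \mathrm{h.c.}
\]
Setting $C := \wt{B}[P] - B$ and $N_i := P^{\perp}(A+B-\lambda_i)P^{\perp}$, the block form~\eqref{eqn:differenceSchur} gives $C = P^\perp C P^\perp$, so $P^{\perp}(\lambda_i - H) P^{\perp} = -(N_i + C)$. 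What remains is to identify this expression with the formula claimed in the lemma, which boils down to the pseudo-inverse identity
\[
(N_i + C)^{\dagger} C = \left(P^{\perp} + C^{\dagger} N_i\right)^{\dagger}.
\]

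The hard part will be verifying this pseudo-inverse identity, since pseudo-inverses do not generally respect algebraic manipulations. The plan is to restrict attention to the subspace $\mathrm{Im}(P^{\perp})$, where both sides reduce to ordinary inverses. By the block form~\eqref{eqn:differenceSchur}, $C$ acts on $\mathrm{Im}(P^{\perp})$ as the negative of the Schur complement of $B_{11}$ in $B$, which is positive definite by Lemma~\ref{lem:Schurfact} applied to $B \prec 0$. Similarly, $N_i + C = P^{\perp}(H - \lambda_i) P^{\perp}$ is positive definite on $\mathrm{Im}(P^{\perp})$ since $\lambda_i \leq \lambda_n$ lies strictly below $\lambda_{n+1}\{H\}$ (which itself is at least $\lambda_{n+1}$ by Lemma~\ref{lem:orderzero}). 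On $\mathrm{Im}(P^{\perp})$ both pseudo-inverses become ordinary inverses, and the identity collapses to the trivial manipulation $(N_i + C)^{-1} C = (I + C^{-1} N_i)^{-1}$. Off $\mathrm{Im}(P^{\perp})$, both sides annihilate $\mathrm{Im}(P)$. Finally, $(\Delta P)\, v_i \in \mathrm{Im}(P^{\perp})$ by the identity $P\Delta P P = 0$ (from differentiating $P^2 = P$), so the subspace-restricted identity is precisely what is needed to compare the two expressions acting on $(\Delta P)\, v_i v_i^*$.
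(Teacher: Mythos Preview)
Your proof is correct and follows essentially the same approach as the paper: apply the chain rule $D\mf{F}_P = DP_H \circ D\wt{B}_P$, simplify using $(B-\wt{B}[P])v_i=0$ and $(PBP)^\dagger B v_i = v_i$, and then reduce the pseudo-inverse manipulation to an ordinary inverse identity on $\mathrm{Im}(P^\perp)$ where both $C=\wt{B}[P]-B$ and $N_i+C$ are positive definite. Your justification of the identity $(N_i+C)^\dagger C = (P^\perp + C^\dagger N_i)^\dagger$ is in fact more explicit than the paper's, which simply states that one can ``take the factor of $\wt{B}[P]-B$ inside of the pseudoinverse''; your observations that both sides annihilate $\mathrm{Im}(P)$ and that $(\Delta P)v_i \in \mathrm{Im}(P^\perp)$ are correct and make the argument self-contained.
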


\begin{proof}
Applying Lemma \ref{lem:contour} (with  $H=A+\wt{B}[P]$ and $\Delta H = D \wt{B}_P [\Delta P]$) and Lemma \ref{lem:fB}, we have
\begin{eqnarray*}
    D\mf{F}_P[\Delta P] &=&  \sum_{i=1}^{n} \left[P^\perp(\lambda_i - H) P^\perp \right]^\dagger \left[ \left(B - \wt{B}[P] \right) (\Delta P) (P B P)^\dagger B + \mathrm{h.c.} \right]  v_{i} v_{i}^{*} + \mathrm{h.c.}
\end{eqnarray*}
For $i=1,\ldots,n$, $\big(B - \wt{B}[P] \big)v_i = 0$ and $(P B P)^\dagger B v_i = B^{-1} \wt{B}[P] v_i = v_i$, so our expression for $D\mf{F}_P[\Delta P]$ simplifies to 
\begin{eqnarray*}
D\mf{F}_P[\Delta P] & =& \sum_{i=1}^{n} \left[P^\perp \left(\lambda_i - A - \wt{B}[P] \right) P^\perp \right]^\dagger  \left(B - \wt{B}[P] \right) (\Delta P) v_{i} v_{i}^{*} + \mathrm{h.c.} \\
& = &  \sum_{i=1}^{n} \left[P^\perp \left(\wt{B}[P] - B + A + B - \lambda_i \right) P^\perp \right]^\dagger  \left(\wt{B}[P] - B\right) (\Delta P) v_{i} v_{i}^{*} + \mathrm{h.c.}
\end{eqnarray*}
Now for any $i=1,\ldots,n$, $\mathrm{Im}(P)$ is an invariant subspace
for the self-adjoint operator $A + \wt{B}[P] - \lambda_i$, and
$\mathrm{Im}(P)^\perp$ is an invariant subspace as well. As an operator
$\mathrm{Im}(P)^\perp \rightarrow \mathrm{Im}(P)^\perp$, $A + \wt{B}[P]
- \lambda_i$ is positive definite and hence invertible. Thus the
pseudoinverse in the preceding expression is 
effectively taking a matrix
inverse on the lower-right block the matrix representation as
in~\eqref{eqn:Bblock}, while all other blocks are zero.

Similarly, $\mathrm{Im}(P)^\perp$ is invariant for $\wt{B}[P] - B$,
which is only nonzero in its lower-right block. By Lemma
\ref{lem:Schurfact}, $\wt{B}[P] - B$ is positive definite (hence
invertible) as an operator $\mathrm{Im}(P)^\perp \rightarrow
\mathrm{Im}(P)^\perp$. By taking the factor of $\wt{B}[P] - B$ inside of
the pseudoinverse we obtain the desired equality.  \end{proof}

For $i=1,\ldots,n$, define 
\[
Z_i := \left(P^\perp + \left(\wt{B}[P]-B\right)^\dagger \left(A+B- \lambda_i\right) P^\perp \right)^\dagger.
\]
then Lemma \ref{lem:composedJacobian} can be equivalently expressed as
\[
D\mf{F}_P[\Delta P] = \sum_{i=1}^{n} Z_i (\Delta P) v_{i} v_{i}^{*} + \mathrm{h.c.}
\]

\begin{remark}\label{remark:Zmatrix}
The matrix of the linear transformation $Z_i$ in Lemma \ref{lem:composedJacobian} is given by 
\[
\left[ Z_i \right]_{V_N} =
\left(\begin{array}{cc}
0 & 0\\
0 & J_i
\end{array}\right),
\]
where 
\[
J_i := \left[ I_{N-n} - \left( S_{22} \right)^{-1} (\Lambda_2 -
\lambda_i) \right]^{-1}.
\]
Here $\Lambda_2 := \mathrm{diag}(\lambda_{n+1},\ldots,\lambda_N)$, and
\[
S_{22}=B_{22} - B_{12}^* B_{11}^{-1} B_{12}
\]
is the Schur complement with $S_{22} \prec 0$. 
\end{remark}

We can view the Jacobian $D\mf{F}_P$ as a linear operator on the tangent
space at $P$ of the manifold of all rank-$n$ density matrices. 
We will see later that the set of eigenvalues of $D\mf{F}_P$ is the
union of the set of eigenvalues of $\{J_i\}$. We find an upper bound
for all eigenvalues of $\{J_{i}\}$ in Lemma~\ref{lem:eigenJ}:

\begin{lemma}\label{lem:eigenJ}
For $i=1,\ldots,n$, $J_i$ is diagonalizable with $\sigma(J_i) \subset (0,1)$ and
\begin{eqnarray*}
\gamma := 
\max_{i=1,\ldots,n} \lambda_{\max} (J_i)
 \leq \frac{\Vert S_{22} \Vert_2}{\lambda_g + \Vert S_{22} \Vert_2 } 
\leq \frac{\Vert P^\perp B P^\perp \Vert_2}{\lambda_g + \Vert P^\perp B P^\perp \Vert_2 } \leq \frac{\Vert B \Vert_2}{\lambda_g + \Vert B \Vert_2 } < 1.
\end{eqnarray*}
\end{lemma}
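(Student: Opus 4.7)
The plan is to diagonalize $J_i$ via a carefully chosen similarity transform that turns it into (the inverse of) a Hermitian positive definite matrix, then bound the extremal eigenvalue using a min-max argument, and finally chain a sequence of operator-norm inequalities using Schur-complement monotonicity.

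First, I would record two definiteness facts that drive everything. Since $S_{22}\prec0$ (recall $S_{22}$ is the Schur complement appearing in Lemma~\ref{lem:Schurfact}, and $B_{22}\prec0$ together with $B_{11}\prec0$ forces $S_{22}\prec0$ via that lemma), the matrix $T:=(-S_{22})^{1/2}$ is Hermitian positive definite. Second, since $i\le n$ we have $\lambda_j-\lambda_i\ge \lambda_{n+1}-\lambda_n=\lambda_g$ for every $j\ge n+1$, i.e.\ $\Lambda_2-\lambda_i\succeq \lambda_g I_{N-n}\succ 0$. Writing $S_{22}^{-1}=-T^{-2}$, we have $I_{N-n}-S_{22}^{-1}(\Lambda_2-\lambda_i)=I+T^{-2}(\Lambda_2-\lambda_i)$, which is similar (via conjugation by $T$) to
\[
K_i := I_{N-n} + T^{-1}(\Lambda_2-\lambda_i)T^{-1}.
\]
Thus $J_i$ is similar to the Hermitian matrix $K_i^{-1}$, so $J_i$ is diagonalizable. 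Since $T^{-1}(\Lambda_2-\lambda_i)T^{-1}\succ 0$, we get $K_i\succ I$, so $\sigma(J_i)=\sigma(K_i^{-1})\subset(0,1)$.

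Next, for the explicit bound I would use $\lambda_{\max}(J_i)=1/\lambda_{\min}(K_i)$. By the standard substitution $y=T^{-1}x$,
\[
\lambda_{\min}\!\left(T^{-1}(\Lambda_2-\lambda_i)T^{-1}\right)
=\min_{y\neq 0}\frac{y^*(\Lambda_2-\lambda_i)y}{y^*(-S_{22})y}
\;\ge\; \frac{\lambda_g}{\|S_{22}\|_2},
\]
using $\Lambda_2-\lambda_i\succeq \lambda_g I$ in the numerator and $-S_{22}\preceq \|S_{22}\|_2 I$ in the denominator. Therefore $\lambda_{\min}(K_i)\ge 1+\lambda_g/\|S_{22}\|_2=(\|S_{22}\|_2+\lambda_g)/\|S_{22}\|_2$, giving the desired bound $\lambda_{\max}(J_i)\le \|S_{22}\|_2/(\|S_{22}\|_2+\lambda_g)$; taking the max over $i$ yields the first inequality for $\gamma$.

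Finally, I would chain the two remaining inequalities. Because $-B_{12}^*B_{11}^{-1}B_{12}\succeq 0$ (since $B_{11}\prec 0$ implies $B_{11}^{-1}\prec 0$), we have $S_{22}-B_{22}\succeq 0$, and since both are negative definite this gives $\|S_{22}\|_2 \le \|B_{22}\|_2$. Observing that $B_{22}$ is exactly the matrix of $P^\perp B P^\perp$ in the orthonormal basis $v_{n+1},\ldots,v_N$ yields $\|B_{22}\|_2=\|P^\perp BP^\perp\|_2$, and $\|P^\perp\|_2=1$ then gives $\|P^\perp BP^\perp\|_2\le \|B\|_2$. Because the function $x\mapsto x/(x+\lambda_g)$ is monotonically increasing on $(0,\infty)$, the two norm inequalities transfer directly into the two inequalities stated for $\gamma$, and the final strict bound $\|B\|_2/(\|B\|_2+\lambda_g)<1$ is immediate from $\lambda_g>0$.

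The only genuinely delicate step is the similarity transform: one must notice that although $J_i$ itself is not Hermitian, the asymmetry is cured by symmetrically ``splitting'' $S_{22}^{-1}$ through $T$, after which all the bounds reduce to textbook Rayleigh-quotient estimates. The rest is Schur-complement monotonicity plus submultiplicativity of the spectral norm.
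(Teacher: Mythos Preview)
Your proof is correct and follows essentially the same approach as the paper's: both symmetrize $J_i$ by a similarity transform involving the square root of one of the two positive definite factors (you use $(-S_{22})^{1/2}$, the paper uses $(\Lambda_2-\lambda_i)^{1/2}$), then bound the resulting Hermitian matrix by elementary Rayleigh-quotient/operator-norm estimates and the Schur-complement inequality $\|S_{22}\|_2\le\|B_{22}\|_2$. The choice of which factor to split is cosmetic; the structure of the argument is the same.
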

\begin{proof}
We adopt the notation used in Remark \ref{remark:Zmatrix}. Since the
eigenvalues of a matrix are invariant under conjugation (i.e. similarity
transformation), conjugating $J_i$ by $(\Lambda_2 - \lambda_i)^{1/2}$
yields the equality of spectra
\[
\sigma \left( J_i \right)
= \sigma \left( \left[ I_{N-n} + (\Lambda_2 - \lambda_i)^{1/2} \left( -S_{22} \right)^{-1} (\Lambda_2 - \lambda_i)^{1/2} \right]^{-1} \right).
\]
Here the equality is defined in the sense of sets.
The matrix on the right-hand side is positive definite, so $\sigma(Z_i) \subset (0,1)$ as claimed. In fact, the matrix $(\Lambda_2 - \lambda_i)^{1/2} \left( -S_{22} \right)^{-1} (\Lambda_2 - \lambda_i)^{1/2}$ is positive definite and we have 
\[
\sigma \left( J_i \right)
= \frac{1}{1 + \sigma \left[  (\Lambda_2 - \lambda_i)^{1/2} \left( -S_{22} \right)^{-1} (\Lambda_2 - \lambda_i)^{1/2} \right] }.
\]

Now observe
\begin{eqnarray*}
&& \lambda_{\min} \left[ (\Lambda_2 - \lambda_i)^{1/2} \left( -S_{22} \right)^{-1} (\Lambda_2 - \lambda_i)^{1/2} \right] \\
\qquad =&&  \left( \lambda_{\max} \left[ (\Lambda_2 - \lambda_i)^{-1/2} \left( -S_{22} \right) (\Lambda_2 - \lambda_i)^{-1/2} \right] \right) ^{-1}\\
 \qquad \geq&& \left( \Vert (\Lambda_2 - \lambda_i)^{-1/2} \Vert_2 ^2 \cdot  \Vert S_{22} \Vert_2 \right)^{-1} \\
 \qquad \geq&& \frac{\lambda_g}{\Vert S_{22} \Vert_2}.
\end{eqnarray*}

This establishes the first claimed inequality. Recall that $ S_{22} \preceq 0$, but also $ S_{22} = B_{22} - B_{12}^* B_{11}^{-1} B_{12}$, so $S_{22} \succeq B_{22}$. Thus
\[
\Vert S_{22} \Vert_2 \leq \Vert B_{22} \Vert_2 = \Vert P^\perp B P^\perp \Vert_2 \leq \Vert B \Vert _2.
\]
Since $x\mapsto x/(1+x)$ is increasing for $x\geq 0$, this proves the rest of the inequalities.

The diagonalizability of $J_i$  is implied by the similarity
transformation.
\end{proof}

\subsection{Dynamical systems perspective on adaptive compression}

In order to study the local convergence properties of the fixed point iteration map $\mf{F}$, we first note that the set of all density matrices $\mathcal{D}$ is not a subspace, but a smooth submanifold of
$\mathbb{C}^{N\times N} \simeq \mathbb{R}^{2N^2}$. 
$\mathcal{D}$ can be
identified with the Grassmannian
$\mathbf{Gr}(n,\mathbb{C}^N)$, which is the set of all complex
$n$-dimensional subspaces of $\mathbb{C}^N$. 
Since the fixed point iteration map $\mf{F}$ is a map from $\mathcal{D}$
to itself and is smooth on
a neighborhood of $P$, we consider
the linearization of $\mf{F}$ about the fixed point $P$ is the tangent space
$T_P \mathcal{D} \subset \mathbb{C}^{N\times N} \simeq
\mathbb{R}^{2N^2}$. This tangent space can be characterized as follows.

First note that any smooth path of rank-$n$ density matrices, denoted by $\gamma(t)$ with
$\gamma(0) = P$, can be expressed as
\[
\gamma(t) = V_N U(t) \left(\begin{array}{cc}
I & 0 \\
0 & 0
\end{array}\right) U(t)^* V_N^*,
\]
where $U(t)$ is a smooth path of unitary matrices with $U(0) = I$ and $V_N = [v_1,\ldots,v_N]$. Since the Lie algebra of the unitary group (i.e. the tangent space at the identity element) is the set of skew-Hermitian matrices, we have 
\[
U'(0) = \left(\begin{array}{cc}
Y & -X^* \\
X & Z
\end{array}\right),
\]
where $Y^* = -Y$ and $Z^* = -Z$. Then
\[
\gamma'(0) = V_N \left(\begin{array}{cc}
Y & -X^* \\
X & Z
\end{array}\right)
\left(\begin{array}{cc}
I & 0 \\
0 & 0
\end{array}\right) V_N^* + \mathrm{h.c.} = \widetilde{V} \left(\begin{array}{cc}
0 & X^* \\
X & 0
\end{array}\right)
V_N^*.
\]
Hence the tangent space
\begin{equation}\label{eqn:tangentSpace}
T_P \mathcal{D} = \left\{ V_N \left(\begin{array}{cc}
0 & X^* \\
X & 0
\end{array}\right) V_N^* \,:\,X\in\mathbb{C}^{(N-n)\times n}  \right\},
\end{equation}
and we can make the identification $T_P \mathcal{D} \simeq \mathbb{C}^{(N-n)\times n}$.
Observe that the map $\Phi : \mathbb{C}^{(N-n)\times n} \rightarrow \mathcal{D}$ defined by 
\begin{equation}\label{eqn:tangentDiffeo}
X \mapsto V_N 
\left[\exp \left(\begin{array}{cc}
0 & -X^* \\
X & 0
\end{array}\right) \right]
\left(\begin{array}{cc}
I & 0 \\
0 & 0
\end{array}\right)
\left[ \exp \left(\begin{array}{cc}
0 & -X^* \\
X & 0
\end{array}\right) \right]^*
V_N^*
\end{equation}
is a local diffeomorphism near the origin. Then for $Q\in\mathcal{D}$
sufficiently close to $P$, we can identify $Q$ with $X:=\Phi^{-1}(Q)\in
\mathbb{C}^{(N-n)\times n}$. Then we can identify $\mf{F}$ with a map
$\mf{H}$ defined on a neighborhood $\mathcal{U}$ of the origin in
$\mathbb{C}^{(N-n)\times n}$

\begin{remark}
Adopting this perspective, Remark \ref{remark:Zmatrix} translates to 
\[
D\mf{H}_0 [X] = (J_1 X_1, \ldots, J_n X_n),
\]
for any $X = (X_1, \ldots, X_n) \in \mathbb{C}^{(N-n)\times n}$, where
$D\mf{H}_0$ is the usual Jacobian of the map $\mathcal{U}
\rightarrow \mathbb{C}^{(N-n)\times n}$ at the origin, naturally viewed
as a tensor in $\mathbb{C}^{(N-n)\times n \times (N-n) \times n}$.
Identifying tangent vector $X$ with its vectorization in
$\mathbb{C}^{(N-n)n}$, the matricized representation of $D\mf{H}_0$
in $\mathbb{C}^{(N-n)n\times (N-n)n}$ yields
\[
D\mf{H}_0 = 
\left(\begin{array}{cccc}
J_1 & 0 & \cdots  &0 \\
0 & J_2 & \cdots & 0 \\
\vdots & \vdots & \ddots &\vdots \\
0  &0 &\cdots  & J_n
\end{array}\right).
\]
\end{remark}

Near the fixed point $P$, we can view Algorithm \ref{alg:acelineig} as a discrete-time dynamical system on $\mathbb{C}^{(N-n)n}$. The stability of the fixed point $P$ is then determined by the spectrum $\sigma(D\mf{H}_0)$ of the Jacobian $D\mf{H}_0$, which is the union of the spectra $\sigma(J_i)$ over $i=1,\ldots,n$.

\subsection{Asymptotic convergence rate}

We will make use the following Lemma to show the local convergence.

\begin{lemma}\label{lem:stableManifoldFullDim}
Let $F:\mathbb{R}^p \cap B_{\delta}(0) \rightarrow \mathbb{R}^p$ be a
smooth map such that $F(0)=0$, $DF(0)$
is diagonalizable, and the spectral radius $\gamma := \sup \vert \sigma(DF(0)) \vert$ of $DF(0)$ is strictly less than $1$. Then for any $\epsilon >0$ for which $\gamma + \epsilon < 1$, there exist constants $C,c>0$ such that if $\Vert x\Vert_2 < c$, then $\Vert F^k (x) \Vert_2 \leq C (\gamma + \epsilon)^k \Vert x\Vert_2$ for all $k\geq 0$.
\end{lemma}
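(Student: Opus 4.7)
The plan is to reduce the nonlinear iteration to the linear one by constructing an \emph{adapted norm} in which $DF(0)$ is a contraction, then using a Taylor estimate to absorb the nonlinear remainder, and finally translating back to the Euclidean norm via norm equivalence. This is a classical local stability argument; the diagonalizability hypothesis is precisely what guarantees a clean adapted norm.

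\textbf{Step 1 (adapted norm).} Since $DF(0)$ is diagonalizable over $\mathbb{C}$ with spectral radius $\gamma<1$, and since $F$ is real-valued so complex eigenvalues occur in conjugate pairs, the real Jordan form of $DF(0)$ is block-diagonal with $1\times 1$ real blocks and $2\times 2$ rotation-scaling blocks of the form $\bigl(\begin{smallmatrix} a & -b \\ b & a \end{smallmatrix}\bigr)$, each having operator $2$-norm equal to the modulus of the corresponding (possibly complex) eigenvalue. Consequently, there exists an invertible $S\in\mathbb{R}^{p\times p}$ such that the norm $\|x\|_*:=\|S^{-1}x\|_2$ satisfies $\|DF(0)\|_*\le \gamma+\epsilon/2$ (in fact one can achieve $\le \gamma$, but a small slack suffices). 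Since all norms on $\mathbb{R}^p$ are equivalent, there exist $C_1,C_2>0$ with $C_1\|x\|_2\le \|x\|_*\le C_2\|x\|_2$ for all $x$.

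\textbf{Step 2 (Taylor estimate).} Because $F$ is smooth with $F(0)=0$, Taylor's theorem yields a constant $L>0$ and a radius $r>0$ such that $F(x)=DF(0)x+R(x)$ with $\|R(x)\|_*\le L\|x\|_*^2$ whenever $\|x\|_*\le r$. (The $*$-norm bound follows from a Euclidean bound on $D^2F$ and the equivalence constants.) Choose $c':=\min\{r,\epsilon/(2L)\}$. Then for $\|x\|_*\le c'$,
\[
\|F(x)\|_* \;\le\; \|DF(0)\|_*\,\|x\|_*+L\|x\|_*^2 \;\le\; (\gamma+\epsilon/2)\|x\|_* + (\epsilon/2)\|x\|_* \;=\; (\gamma+\epsilon)\|x\|_*.
\]
In particular, $\|F(x)\|_*\le (\gamma+\epsilon)\|x\|_*<\|x\|_*\le c'$, so the ball $\{\|x\|_*\le c'\}$ is forward-invariant under $F$.

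\textbf{Step 3 (iterate and translate).} By induction on the forward-invariant ball, $\|F^k(x)\|_*\le (\gamma+\epsilon)^k\|x\|_*$ for all $k\ge 0$ and all $x$ with $\|x\|_*\le c'$. Set $c:=c'/C_2$, so that $\|x\|_2<c$ implies $\|x\|_*<c'$. Then
\[
\|F^k(x)\|_2 \;\le\; C_1^{-1}\|F^k(x)\|_* \;\le\; C_1^{-1}(\gamma+\epsilon)^k\|x\|_* \;\le\; (C_2/C_1)(\gamma+\epsilon)^k\|x\|_2,
\]
so the conclusion holds with $C:=C_2/C_1$.

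There is no serious obstacle here: the only mildly delicate point is Step 1, where one must handle the possibility of complex eigenvalues of the real matrix $DF(0)$; the real Jordan form (or, equivalently, working first over $\mathbb{C}$ with a diagonalizing similarity and then descending to $\mathbb{R}^p$ via real/imaginary parts) addresses this completely. Everything else is Taylor expansion plus norm equivalence.
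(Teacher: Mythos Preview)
Your proof is correct and follows essentially the same strategy as the paper: pass to an adapted linear structure in which $DF(0)$ is a strict contraction, establish a local one-step contraction estimate, and iterate. The paper phrases the first step as a change of variables $F\mapsto \phi^{-1}\circ F\circ \phi$ that diagonalizes $DF(0)$ (equivalently your adapted norm $\|\cdot\|_*$), and then, rather than invoking the quadratic Taylor remainder, it simply uses continuity of $DF$ at $0$ to get $\|DF(y)\|<\gamma+\epsilon$ on a small ball, followed by the mean value inequality $\|F(x)\|=\|\int_0^1 DF(tx)\,x\,dt\|\le(\gamma+\epsilon)\|x\|$. Your Taylor-remainder route is a harmless variant that uses slightly more (a bound on $D^2F$) but is equally valid here since $F$ is smooth; conversely, you handle the possibility of complex eigenvalues via the real Jordan form more explicitly than the paper does.
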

\begin{proof}
First note that we can assume that in fact $DF(0)$ is diagonal by replacing $F$ with $\phi^{-1} \circ F \circ \phi$ for a suitable change of basis $\phi$. Then $\Vert DF(0) \Vert_2 = \gamma$, and there exists $c$ such that $\Vert y\Vert_2 < c$ implies $\Vert DF(y) \Vert_2 < \gamma + \epsilon$. Thus if $\Vert x\Vert_2 <c$, then
\begin{eqnarray*}
\Vert F(x)\Vert_2 &=& \Vert F(x) - F(0) \Vert_2 \\
&=& \left\Vert \int_{0}^1 DF(tx)\cdot x \,\ud t \right\Vert_2 \\
&\leq & \int_{0}^1 \Vert DF(tx)\Vert_2 \Vert x\Vert_2 \,\ud t \leq (\gamma + \epsilon) \Vert x\Vert_2.
\end{eqnarray*}
Repeated application of this inequality yields the result.
\end{proof}

\begin{remark}
The reader familiar with dynamical systems should note that Lemma
\ref{lem:stableManifoldFullDim} is almost a recapitulation of the stable
manifold theorem in the case that the local stable manifold has full
dimension.
\end{remark}

Now we are ready to prove Theorem~\ref{thm:main2}, which is stated more
precisely in Theorem~\ref{thm:convlineig}.

\begin{theorem}\label{thm:convlineig}
Let $\epsilon > 0$ be small enough so that $\gamma + \epsilon < 1$, where $\gamma$ is as in Lemma \ref{lem:eigenJ}. 
Then there exist constants $C,c>0$ such that if $\Vert P^{(0)} - P \Vert_2 \leq c$, then 
\[
\Vert P^{(k)}-P \Vert_2 \leq C (\gamma+\epsilon)^k \norm{P^{(0)} -
P}_{2}
\]
for all $k\geq 0$.
\end{theorem}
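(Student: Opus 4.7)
The plan is to pull the iteration back through the local chart $\Phi$ of \eqref{eqn:tangentDiffeo} and then invoke Lemma \ref{lem:stableManifoldFullDim} in the resulting flat setting. Since $\Phi : \mathbb{C}^{(N-n)\times n} \to \mathcal{D}$ is a local diffeomorphism near the origin with $\Phi(0) = P$, the composition $\mf{H} := \Phi^{-1} \circ \mf{F} \circ \Phi$ is well-defined and smooth on a neighborhood $\mathcal{U}$ of $0$, and the consistency relation $\mf{F}(P) = P$ established in Lemma \ref{lem:orderzero} gives $\mf{H}(0) = 0$. A local convergence rate for $\mf{H}^k$ at $0$, in the Euclidean norm on $\mathbb{C}^{(N-n)\times n} \simeq \mathbb{R}^{2(N-n)n}$, can then be transferred to the desired estimate for $\mf{F}^k$ at $P$.

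Next I would verify the hypotheses of Lemma \ref{lem:stableManifoldFullDim} for $\mf{H}$. By the chain rule and the identification of $T_P \mathcal{D}$ with $\mathbb{C}^{(N-n)\times n}$ via $\Phi$, the Jacobian $D\mf{H}_0$ is precisely the block-diagonal operator $\mathrm{diag}(J_1,\ldots,J_n)$ described in the Remark following \eqref{eqn:tangentDiffeo}. Lemma \ref{lem:eigenJ} guarantees that each $J_i$ is diagonalizable with real spectrum contained in $(0,1)$ and $\lambda_{\max}(J_i) \leq \gamma$, so $D\mf{H}_0$ is itself diagonalizable with spectral radius at most $\gamma < 1$. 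Lemma \ref{lem:stableManifoldFullDim} then produces constants $\widetilde{C}, \widetilde{c} > 0$ such that
\[
\Vert \mf{H}^k(X) \Vert_2 \leq \widetilde{C}\,(\gamma+\epsilon)^k \Vert X \Vert_2 \qquad \text{whenever } \Vert X\Vert_2 < \widetilde{c},\ k\geq 0.
\]

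To transfer this bound back to the ambient spectral norm on $\mathcal{D}$, I would use the smoothness of $\Phi$ and $\Phi^{-1}$ near $0$ and $P$, respectively, to obtain Lipschitz constants $L_1, L_2 > 0$ with
\[
\Vert \Phi(X) - P \Vert_2 \leq L_1 \Vert X \Vert_2, \qquad \Vert \Phi^{-1}(Q) \Vert_2 \leq L_2 \Vert Q - P \Vert_2
\]
for $X$ and $Q-P$ in suitable neighborhoods. Choose $c > 0$ small enough that $L_2 c < \widetilde{c}$, so that $X^{(0)} := \Phi^{-1}(P^{(0)})$ lies in the domain of the chart estimate whenever $\Vert P^{(0)} - P\Vert_2 \leq c$; the subsequent iterates $X^{(k)} := \mf{H}^k(X^{(0)})$ automatically remain in the chart domain because they contract toward $0$ by the displayed bound. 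Writing $P^{(k)} = \Phi(X^{(k)})$ and chaining the three estimates yields
\[
\Vert P^{(k)} - P \Vert_2 \leq L_1 \widetilde{C}\,(\gamma+\epsilon)^k \Vert X^{(0)} \Vert_2 \leq L_1 L_2 \widetilde{C}\,(\gamma+\epsilon)^k \Vert P^{(0)} - P \Vert_2,
\]
and setting $C := L_1 L_2 \widetilde{C}$ completes the argument.

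The only delicate point is bookkeeping the two Lipschitz distortions so that the geometric rate survives intact. Because $\Phi$ and $\Phi^{-1}$ are Lipschitz on fixed neighborhoods of their basepoints, they contribute only a multiplicative constant that is absorbed into $C$, and the rate $\gamma + \epsilon$ passes through unchanged. The small slack $\epsilon$ in the exponent is exactly what Lemma \ref{lem:stableManifoldFullDim} needs in order to dominate the higher-order nonlinear remainder of $\mf{H}$ on a sufficiently small neighborhood of $0$.
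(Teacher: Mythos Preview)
Your proposal is correct and follows essentially the same route as the paper: pull the iteration back to the flat chart via $\Phi$, apply Lemma \ref{lem:stableManifoldFullDim} to $\mf{H}$ using the block-diagonal form of $D\mf{H}_0$ and the spectral bound from Lemma \ref{lem:eigenJ}, then push the geometric estimate forward through the Lipschitz distortion of the chart. The paper is slightly terser about the last step (it invokes the explicit exponential formula \eqref{eqn:tangentDiffeo} and absorbs the distortion into ``a possibly enlarged constant $C$''), whereas you spell out the two Lipschitz constants $L_1,L_2$ explicitly, but the substance is identical.
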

\begin{proof}
Fix $\epsilon$ as in the statement of the theorem. We can identify
$\mathbb{C}^{(N-n)n}$ with $\mathbb{R}^{2(N-n)n}$, and the corresponding
realification of $D\mf{H}_0$ has all of its eigenvalues in
$(0,\gamma]$. (It has two copies of each of the eigenvalues of $D\mf{H}_0$ as an operator $\mathbb{C}^{(N-n)n}\rightarrow \mathbb{C}^{(N-n)n}$.) By Lemma \ref{lem:stableManifoldFullDim}, there exists a neighborhood $\mathcal{V}$ of 0 within $\mathcal{U} \subset \mathbb{C}^{(N-n)n}$ and a constant $C$ such that $\mf{H}^k (X^{(0)}) \in \mathcal{V} $ and moreover $\Vert \mf{H}^k (X^{(0)})\Vert_2 \leq C (\gamma + \epsilon)^k \Vert X^{(0)} \Vert_2 $ for all $k\geq 0$. From \eqref{eqn:tangentDiffeo} we have
\[
P^{(k)} = 
\widetilde{V}
\left[\exp \left(\begin{array}{cc}
0 & -(X^{(k)})^* \\
X^{(k)} & 0
\end{array}\right) \right]
\left(\begin{array}{cc}
I & 0 \\
0 & 0
\end{array}\right)
\left[\exp \left(\begin{array}{cc}
0 & -(X^{(k)})^* \\
X^{(k)} & 0
\end{array}\right) \right]^*
\widetilde{V}^*.
\]
Since
\[
P = 
\widetilde{V}
\left(\begin{array}{cc}
I & 0 \\
0 & 0
\end{array}\right)
\widetilde{V}^*,
\]
it follows (for a possibly enlarged constant $C$) that $\Vert P^{(k)}-P \Vert_2 \leq C (\gamma+\epsilon)^k \Vert P^{(0)} - P \Vert_2$, as was to be shown.
\end{proof}

\begin{remark}
Recall from Lemma \ref{lem:eigenJ} that $\gamma \leq \frac{\Vert B \Vert_2}{\lambda_g + \Vert B \Vert_2 } < 1$, so we have a linear rate of convergence that depends only on the ratio $\Vert B \Vert_2 / \lambda_g$. If this ratio is smaller, then the convergence is faster, and vice-versa.
\end{remark}

\subsection{Convergence of sub-projectors}
Now we prove Theorem \ref{thm:sub-projector} regarding the rate of convergence of the rank-$m$ sub-projectors $P_m^{(k)}$ to $P_m$. In this section we use $C$ to denote a constant that possibly changes across usages and is understood to be sufficiently large in each context.

The important observation is that $P_m$ can be identified with an invariant submanifold for the dynamics, to which the dynamics are attracted via a (relatively) rapid transient.

Consider
\[
\mathcal{D}_m := \{Q\in \mathcal{D}\,:\,Q\vert_{\mathrm{Im}(P_{m})}=\mathrm{Id}_{\mathrm{Im}(P_{m})}\},
\]
which is a submanifold of $\mathcal{D}$, and can be identified as the submanifold of
$\mathbf{Gr}(n,\mathbb{C}^N)$ consisting of the $n$-dimensional
subspaces of $\mathbb{C}^N$ that contain $\mathrm{Im}(P_{m})$. This is in turn
isomorphic to $\mathbf{Gr}(n-m,\mathbb{C}^N / \mathrm{Im}(P_{m})) \simeq
\mathbf{Gr}(n-m,\mathbb{C}^{N-m })$.

We assume that $\lambda_{m+1}-\lambda_m > 0$, and we allow this gap to
be small in practice. Then
\begin{equation*}\label{eqn:QcloseP}
  \lambda_i\{A+\wt{B}[Q]\} = \lambda_i\{A+\wt{B}[P]\} + \Or(\Vert
Q-P\Vert_2^2).
\end{equation*}
In particular, for each $i=m+1,\ldots,N$, we have
$\lambda_i\{A+\wt{B}[Q]\} > \lambda_{m}$ for all $Q$ sufficiently close
to $P$. If $Q\in \mathcal{D}_m$, then $(\lambda_i,v_i)$
is an eigenpair for $A+\wt{B}[Q]$ for $i=1,\ldots,m$, and these
eigenvalues are the lowest $m$ eigenvalues of $A+\wt{B}[Q]$. It follows that $\mathrm{Im}(\mf{F}(Q)) \supset \mathrm{Im}(P_{m})$. Hence
near the fixed point $P$, $\mathcal{D}_m$ is invariant under the fixed point iteration map $\mf{F}$.

For $Q\in \mathcal{D}$, define $\mf{F}_m(Q)$ to be the rank-$m$
projector onto the span of the lowest $m$ eigenvectors of $A+\wt{B}[Q]$.
The assumption $\lambda_{m+1}-\lambda_m > 0$ guarantees that this map is
canonically defined and smooth near $P$, and $\mf{F}_m (Q) = P_m$ for
all $Q\in \mathcal{D}_m$ sufficiently close to $P$.  Then there is a
neighborhood $\mathcal{N}$ of $P$ in $\mathcal{D}$ such that
$\mf{F}(\mathcal{D}_m\cap\mathcal{N}) \subset \mathcal{D}_m$ and such
that $\mf{F}_m (\mathcal{D}_m \cap\mathcal{N}) = P_m$. In particular, we
have constructed a local invariant manifold $\mathcal{D}_m$ for the
dynamics due to the fixed point iteration.

We would like to prove that
the dynamics converge rapidly to this invariant manifold locally, in the
sense that
\begin{equation}\label{eqn:subprojClaim}
\mathrm{dist}(P^{(k)},\mathcal{D}^m\cap\mathcal{N}) \leq C \gamma_m^k \cdot
\mathrm{dist}(P^{(0)},\mathcal{D}^m\cap\mathcal{N}),
\end{equation}
where we can take $\gamma_m := \Vert B\Vert_2 / (\Vert B
\Vert_2 + \Delta)$ and where `$\mathrm{dist}$' indicates the distance between sets induced by the norm $\Vert \cdot \Vert_2$. 
We claim that in fact Theorem \ref{thm:sub-projector} would follow from
\eqref{eqn:subprojClaim}, together with the preceding remarks. We will justify the choice
of constant $\gamma_m$ later, but for now we map out the rest of the
argument.

To see the claim, note that since $\mf{F}_m$ is smooth near $P$ (hence in particular locally Lipschitz), there exists $L$ such that, for all $\epsilon$ sufficiently small, if $Q$ satisfies $\mathrm{dist}(Q,\mathcal{D}_m \cap \mathcal{N}) \leq \epsilon$, then 
\[
L \epsilon \geq \mathrm{dist}(\mf{F}_m(Q), \mf{F}_m(\mathcal{D}_m\cap\mathcal{N}) ) = \mathrm{dist}(\mf{F}_m(Q),P_m ) = \Vert \mf{F}_m(Q) - P_m \Vert_2.
\]
Thus if we can establish \eqref{eqn:subprojClaim}, then substituting $Q = P^{(k)}$ yields
\[
\Vert P^{(k+1)}_m - P_m \Vert \leq C \gamma_m^k \cdot \mathrm{dist}(P^{(0)},\mathcal{D}^m\cap\mathcal{N}) \leq C \gamma_m^k \Vert P - P^{(0)} \Vert_2,
\]
establishing Theorem \ref{thm:sub-projector}.

We have then reduced Theorem \ref{thm:sub-projector} to the following
lemma.

\begin{lemma}\label{lem:sub-projectorReduction}
There is a neighborhood $\mathcal{W}$ of $P$ in $\mathcal{D}$ such that if $P^{(0)} \in \mathcal{W}$, then $\mathrm{dist}(P^{(k)},\mathcal{D}_m\cap\mathcal{N}) \leq C \gamma_m^k \cdot \mathrm{dist}(P^{(0)},\mathcal{D}^m\cap\mathcal{N})$.
\end{lemma}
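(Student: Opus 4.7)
The plan is to straighten out the invariant submanifold $\mathcal{D}_m$ in the local chart $\Phi$ from \eqref{eqn:tangentDiffeo} and then reproduce the contraction argument of Theorem \ref{thm:convlineig} in the ``transverse'' coordinate, using a refinement of Lemma \ref{lem:eigenJ} tailored to the lowest $m$ columns of $X$. The geometric input is a direct calculation: writing $X=(X_1,\ldots,X_n)\in\CC^{(N-n)\times n}$, for small $X$ the subspace $\mathrm{Im}(\Phi(X))$ is, to leading order, the graph of the linear map $X:\mathrm{Im}(P)\to\mathrm{Im}(P)^\perp$, so the condition $\mathrm{Im}(P_m)\subset\mathrm{Im}(\Phi(X))$ translates infinitesimally to $X_1=\cdots=X_m=0$. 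Consequently $\Phi^{-1}(\mathcal{D}_m)$ is a smooth submanifold of $\mathcal{U}$ whose tangent space at the origin is $\{X_1=\cdots=X_m=0\}$.

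Using the implicit function theorem I would choose a local diffeomorphism $\psi$ near the origin of the form $\psi(X)=(X'-g(X''),X'')$, where $X=(X',X'')$ with $X'=(X_1,\ldots,X_m)$, $X''=(X_{m+1},\ldots,X_n)$, and $\Phi^{-1}(\mathcal{D}_m)$ is the graph $X'=g(X'')$. Because the tangent space computed above forces $Dg(0)=0$, one has $D\psi(0)=I$. Setting $\tilde{\mf{H}}=\psi\circ\mf{H}\circ\psi^{-1}$, the Jacobian $D\tilde{\mf{H}}(0)$ still equals $\mathrm{diag}(J_1,\ldots,J_n)$ as in Remark \ref{remark:Zmatrix}, and local invariance of $\mathcal{D}_m$ under $\mf{F}$ becomes invariance of $\{X'=0\}$ under $\tilde{\mf{H}}$. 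Thus the first component factors as
\[
\tilde{\mf{H}}_1(X',X'')=A(X'')\,X'+R(X',X''),
\]
with $A(0)=\mathrm{diag}(J_1,\ldots,J_m)$, the upper-left block of $D\tilde{\mf{H}}(0)$, and $\norm{R(X',X'')}_2=\Or(\norm{X'}_2^2)$ uniformly for $\norm{X''}_2$ small.

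The spectral bound I need is a refinement of Lemma \ref{lem:eigenJ} for $i\leq m$: since $\lambda_i\leq\lambda_m$, one has $\Lambda_2-\lambda_i\succeq\Delta_m\,I_{N-n}$, and running the proof of Lemma \ref{lem:eigenJ} verbatim with this stronger gap yields $\lambda_{\max}(J_i)\leq\norm{B}_2/(\norm{B}_2+\Delta_m)=\gamma_m$ for every $i\leq m$. Since each $J_i$ is diagonalizable (again by Lemma \ref{lem:eigenJ}), a linear change of basis in the $X'$-space makes $A(0)$ diagonal, so that $\norm{A(0)}_2\leq\gamma_m$. Continuity of $A$ then gives $\norm{A(X'')}_2\leq\gamma_m+\epsilon/2$ on a small neighborhood of the origin, and absorbing the quadratic remainder for $\norm{X'}_2$ small yields the one-step bound $\norm{X'^{(k+1)}}_2\leq(\gamma_m+\epsilon)\norm{X'^{(k)}}_2$.

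To iterate this bound I would shrink $\mathcal{W}$ so that Theorem \ref{thm:convlineig} keeps the whole trajectory $P^{(k)}$ in the neighborhood on which the estimates above are valid (in particular keeping $\norm{X''^{(k)}}_2$ small for every $k$). The resulting inequality $\norm{X'^{(k)}}_2\leq C(\gamma_m+\epsilon)^k\norm{X'^{(0)}}_2$, combined with the fact that $\Phi\circ\psi^{-1}$ is a diffeomorphism near the origin that carries $\{X'=0\}$ onto $\mathcal{D}_m$, makes $\norm{X'}_2$ comparable to $\mathrm{dist}(\,\cdot\,,\mathcal{D}_m\cap\mathcal{N})$ near $P$, giving the claim. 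The step I expect to be the most delicate is the construction of $\psi$: the property $D\psi(0)=I$ is crucial, since otherwise the block-diagonal structure of $D\tilde{\mf{H}}(0)$ would be spoiled and an off-diagonal coupling would drag the $X'$-dynamics back to the slower rate $\gamma$ of Theorem \ref{thm:main2}; it is precisely the identity $\tilde{\mf{H}}_1(0,X'')\equiv 0$, coming from the \emph{nonlinear} invariance of $\mathcal{D}_m$, that upgrades the linearized block bound $\gamma_m$ to a genuine contraction rate on $X'$.
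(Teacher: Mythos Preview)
Your proposal is correct and follows essentially the same route as the paper: the paper reduces Lemma~\ref{lem:sub-projectorReduction} to an abstract statement (Lemma~\ref{lem:invariantSubspace}) by exactly the change of coordinates you describe---straightening the invariant submanifold $\mathcal{D}_m$ (using that its tangent space at $P$ is $\{X_1=\cdots=X_m=0\}$) and then diagonalizing the Jacobian---and the proof of Lemma~\ref{lem:invariantSubspace} in Appendix~\ref{appendix:sub-projector} is the mean-value-inequality version of your Taylor-expansion argument $\tilde{\mf{H}}_1(X',X'')=A(X'')X'+R$ combined with the invariance identity $\tilde{\mf{H}}_1(0,X'')\equiv 0$. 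Your observation that the refined gap bound $\lambda_{\max}(J_i)\leq\gamma_m$ for $i\leq m$ follows from rerunning Lemma~\ref{lem:eigenJ} with $\Lambda_2-\lambda_i\succeq\Delta_m$ is also precisely what the paper records in the paragraph preceding Lemma~\ref{lem:sub-projectorReduction}.
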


In order to motivate the constant $\gamma_m$, note that $T_P \mathcal{D}_m$, considered as a subspace of $\mathbb{C}^{(N-n)n} \simeq T_P \mathcal{D}$, is given by 
\[
T_2 := \{ (X_1,\ldots,X_n)\in \mathbb{C}^{(N-n)n}\,:\,X_1=\cdots=X_m = 0 \},
\]
and we have locally the splitting $ T_P \mathcal{D} \simeq
\mathbb{C}^{(N-n)n} = T_1 \oplus T_2$, where 
\[
T_1 := \{ (X_1,\ldots,X_n)\in \mathbb{C}^{(N-n)n}\,:\,X_{m+1}=\cdots=X_N = 0 \}.
\]
Observe that the eigenvalues of $D\mf{H}_0\vert_{T_1}$ are the eigenvalues of $J_1,\ldots,J_m$. By the proof of Lemma \ref{lem:eigenJ}, all of these eigenvalues are in $(0,\gamma_m)$, so the spectrum of $D\mf{H}_0\vert_{T_1}$ is contained in $(0,\gamma_m)$. (The eigenvalues of $D\mf{H}_0\vert_{T_2}$ are the eigenvalues of $J_{m+1},\ldots,J_N$, which are all in $(0,1)$.) At least formally, this discussion motivates the statement of Lemma \ref{lem:sub-projectorReduction}.
By considering a smooth change of coordinates near $P$ that straightens the invariant submanifold $\mathcal{D}_m$ and then diagonalizes the Jacobian, we can replace Lemma \ref{lem:sub-projectorReduction} with the following:
\begin{lemma}\label{lem:invariantSubspace}
Let $F:\mathbb{R}^p \cap B_{\delta}(0) \rightarrow \mathbb{R}^p$ be a
smooth map such that $F(0)=0$, $DF(0)$ is diagonal, $0\prec DF(0) \prec
1$, and $DF(0)\vert_{E_1} \prec \alpha < 1$, where $E_1 :=
\mathrm{span}\,\{e_1,\ldots,e_r\}$ for $r \leq p$. Further suppose that $E_2 := E_1^\perp$ is invariant under $F$, i.e.,
$F(E_2 \cap B_{\delta}(0)) \subset E_2$. Then there exists $\delta' \in (0,\delta)$ such that $F$ maps $B_{\delta'}(0)$ into itself and such that for any $x\in B_{\delta'}(0)$,
\[
\mathrm{dist}(F^k(x),E_2 \cap B_{\delta}(0)) \leq \alpha^k\cdot \mathrm{dist}(x,E_2 \cap B_\delta (0)).
\]
\end{lemma}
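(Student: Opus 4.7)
The plan is to exploit the invariance of $E_2$ to reduce the statement to a derivative bound on the $E_1$-component of $F$. Set $g := \pi_1 \circ F$, where $\pi_j$ denotes orthogonal projection onto $E_j$. The invariance hypothesis $F(E_2 \cap B_\delta(0)) \subset E_2$ is equivalent to $g \equiv 0$ on $E_2 \cap B_\delta(0)$. Since $DF(0)$ is diagonal with $DF(0)\vert_{E_1} \prec \alpha$, the derivative $Dg(0) = \pi_1 DF(0)$ satisfies $\Vert Dg(0)\Vert_2 < \alpha$ with strict inequality. By continuity of $Dg$ (smoothness of $F$), there exists $\delta' \in (0,\delta)$ on which $\Vert Dg(y)\Vert_2 \leq \alpha$ for all $y \in B_{\delta'}(0)$. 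I would further shrink $\delta'$ so that $F$ maps $B_{\delta'}(0)$ into itself, which is possible because $\Vert DF(0)\Vert_2 < 1$ (recall $DF(0) \prec I$) by the same argument used in the proof of Lemma~\ref{lem:stableManifoldFullDim}.

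For any $x \in B_{\delta'}(0)$, decompose $x = x_1 + x_2$ with $x_j \in E_j$. Since $E_1 \perp E_2$, we have $\Vert x_2\Vert_2 \leq \Vert x\Vert_2 < \delta' < \delta$, so $x_2 \in E_2 \cap B_\delta(0)$ and $g(x_2) = 0$. The fundamental theorem of calculus then gives
\[
g(x) = g(x_1+x_2) - g(x_2) = \int_0^1 Dg(s x_1 + x_2)[x_1]\,\ud s.
\]
The segment $\{sx_1+x_2 : s\in[0,1]\}$ lies in $B_{\delta'}(0)$, as $\Vert sx_1+x_2\Vert_2^2 = s^2 \Vert x_1\Vert_2^2 + \Vert x_2\Vert_2^2 \leq \Vert x\Vert_2^2$ by orthogonality. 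Thus $\Vert\pi_1 F(x)\Vert_2 \leq \alpha\,\Vert\pi_1 x\Vert_2$. Since $F(B_{\delta'}(0))\subset B_{\delta'}(0)$, induction on $k$ gives $\Vert\pi_1 F^k(x)\Vert_2 \leq \alpha^k \Vert\pi_1 x\Vert_2$ for all $k\geq 0$.

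To finish, it suffices to identify $\mathrm{dist}(y, E_2 \cap B_\delta(0))$ with $\Vert\pi_1 y\Vert_2$ whenever $y\in B_\delta(0)$. Indeed, the candidate $z := \pi_2 y$ lies in $E_2 \cap B_\delta(0)$ and achieves $\Vert y-z\Vert_2 = \Vert\pi_1 y\Vert_2$, while every $z\in E_2$ satisfies $\Vert y-z\Vert_2 \geq \Vert\pi_1(y-z)\Vert_2 = \Vert\pi_1 y\Vert_2$. Applying this identity to both $y = x$ and $y = F^k(x)$ (both of which lie in $B_\delta(0)$) and combining with the bound above yields the claim. The main obstacle is purely bookkeeping in the choice of $\delta'$: it must simultaneously guarantee self-invariance of $B_{\delta'}(0)$ under $F$, the derivative bound $\Vert Dg\Vert_2 \leq \alpha$, and containment of the integration segments. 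The strict inequality $\Vert Dg(0)\Vert_2 < \alpha$ provides exactly the open margin needed to make all three conditions hold simultaneously.
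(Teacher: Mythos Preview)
Your proof is correct and essentially identical to the paper's own argument: both decompose $F$ via the $E_1$-projection (your $g$ is the paper's $F_1$), use the invariance of $E_2$ to set up a fundamental-theorem-of-calculus estimate along the segment from $(0,x_2)$ to $(x_1,x_2)$, and shrink $\delta'$ so that $\Vert Dg\Vert_2 \leq \alpha$ and $F$ is non-expansive on $B_{\delta'}(0)$. Your version is slightly more explicit in verifying the identification $\mathrm{dist}(y,E_2\cap B_\delta(0)) = \Vert \pi_1 y\Vert_2$ for $y\in B_\delta(0)$, which the paper leaves implicit.
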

\begin{proof}
See Appendix \ref{appendix:sub-projector}. 
\end{proof}
\begin{remark}
Note carefully that we \textnormal{do not} consider a change of
coordinates that produces a linear dynamical system, i.e., we do not
assume $F$ is linear in Lemma \ref{lem:invariantSubspace}. In general,
such a change of coordinates does exist near a hyperbolic fixed point by
the Hartman-Grobman theorem (see, e.g., Theorem 10.4 of
\cite{TeschlODE}), but it is only guaranteed to be a homeomorphism (not
necessarily Lipschitz). We need the change of coordinates to be
Lipschitz in order to compare distances up to a constant. 
\end{remark}

\section{Global convergence analysis}\label{sec:global}

Before providing a roadmap for the proof of the global convergence
properties in Theorem \ref{thm:main3}, we first show that the adaptive compression method \textit{cannot} be expected to converge globally to the solution of~\eqref{eqn:lineig} for  \textit{every} initial guess $P^{(0)}$.

Consider taking $N=2$, $n=1$, $A$ is a zero matrix, and \[
B=
\left(\begin{array}{cc}
-2 & 0\\
0  & -1
\end{array}\right).
\]
Note that the true density matrix is $P = e_1 e_1^*$, where
$e_{1}=(1,0)^{T},e_{2}=(0,1)^{T}$. However $e_2 e_2^*$ is also a fixed
point of $\mf{F}$. Thus if we take $P^{(0)} = e_2 e_2^*$, we get
convergence to the wrong fixed point.

A slightly more sophisticated example demonstrates that it is possible for Algorithm \ref{alg:acelineig} to stall on some incorrect fixed point, even if not initialized there. Take $N=3$ and $n=1$ with
\[
A=
\left(\begin{array}{ccc}
0 & 0 & 0 \\
0  & -2 & 0\\
0 & 0 & 0
\end{array}\right),
\quad
B=
\left(\begin{array}{ccc}
-4 & 0 & 0 \\
0  & -1 & 0\\
0 & 0 & -1
\end{array}\right).
\]
The true density matrix is $P = e_1 e_1^*$. However, suppose that $P^{(0)} = e_3 e_3^*$. Then 
\[
A+\wt{B}[P^{(0)}] = \left(\begin{array}{ccc}
0 & 0 & 0 \\
0  & -2 & 0\\
0 & 0 & -1
\end{array}\right),
\]
so $P^{(1)} = e_2 e_2^*$. Now
\[
A+\wt{B}[P^{(1)}] = \left(\begin{array}{ccc}
0 & 0 & 0 \\
0  & -3 & 0\\
0 & 0 & 0
\end{array}\right),
\]
so $P^{(1)}$ is a fixed point, and Algorithm \ref{alg:acelineig} fails
to converge. 

Therefore we can only hope for convergence for \emph{almost every} choice of initial guess. In the sequel we will see that such incorrect fixed points are unstable, and this observation will allow us to prove an almost-sure convergence result.

\subsection{Outline of the proof of global convergence} 
Before embarking on the global convergence analysis, we
pause to provide a detailed outline of Section \ref{sec:global}. The reader may find it useful to refer back to this outline throughout the section.

In Section \ref{subsec:eigenMono} we introduce a key property of
Algorithm \ref{alg:acelineig}: each of the bottom $n$ eigenvalues
$\lambda_{1}^{(k)},\ldots,\lambda_n^{(k)}$ of $A+\wt{B}[P^{(k-1)}]$ is
monotonically non-increasing in $k$. We call this property
\textit{eigenvalue monotonicity}. Eigenvalue monotonicity implies that
$\sum_{i=1}^n \lambda_i^{(k)}$ is convergent in $k$. In particular,
when $k$ is large, $\sum_{i=1}^n \lambda_i^{(k)}$ does not change much
across iterations. Lemma \ref{lem:densMatChange}
shows that the change of $P^{(k)}$ across one iteration can be controlled by
the change of $\sum_{i=1}^n \lambda_i^{(k)}$. So when $k$ is large,
$\mf{F}(P^{(k)}) \approx P^{(k)}$, i.e., the point $P^{(k)}$ is almost
fixed by the mapping $\mf{F}$.

Unfortunately, this is not yet enough to directly imply that the sequence
$P^{(k)}$ is convergent, but one might hope that a point that is close
to being fixed is close to some fixed point! Notice that a fixed point
$P_f$ of $\mf{F}$ must satisfy the condition that $\mathrm{Im}(P_f)$ is
an invariant subspace for $A+B$, i.e., must satisfy $P_f =
\sum_{i=1}^{n} u_i u_i ^*$, where $u_i$ are eigenvectors of $A+B$. We
will show (see Lemma \ref{lem:almostEigen}) that a point that is almost
fixed is indeed almost a point of this form, i.e., an orthogonal projector onto an invariant subspace of $A+B$.

To avoid pathologies, one hopes that there are only finitely many projectors of this form, and indeed this is the case if $A+B$ has distinct eigenvalues. This observation brings us to Section \ref{subsec:genericity}, wherein we impose conditions on our matrices $A$ and $B$ that hold generically in a precise sense and that allow us to avoid nongeneric pathologies in the proof of global convergence. One of these conditions, as we have said, is that $A+B$ has distinct eigenvales. The other, which is more technical, guarantees that $A+\wt{B}[P_f]$ has a spectral gap at every fixed point $P_f$. This will allow us to perform linearization at fixed points (which are not necessarily the true density matrix $P$) as we have done in Section \ref{subsec:linearLocal}.

With these new assumptions at hand, we proceed with the proof of global convergence. In Section \ref{subsec:globalConvergenceToFixed}, we establish that we have global convergence to a fixed point, though we do not yet say anything about whether or not this limit point is the true density matrix $P$. The argument proceeds as follows. We now know that for every $k$ sufficiently large, $P^{(k)}$ is close to some orthogonal projector onto an invariant subspace of $A+B$ and moreover that there are only finitely many projectors $P_\tau$ of this form.
These points must be mutually isolated
since they are only finite in number. Since (1) $P^{(k)}$ must be close
to at least one of these points $P_\tau$ for any $k$ large, (2)
$P^{(k)}$ changes by a vanishingly small amount as $k$ becomes large,
and (3) the points $P_\tau$ are mutually isolated, it follows that
$P^{(k)}$ converges to one such point $P_\tau$ as $k\rightarrow \infty$.
One might expect that such a limit point must actually be a fixed point
$P_f$, and indeed this is true.

In summary, these arguments establish that we have global convergence to
a fixed point. We have already demonstrated with toy counterexamples
that this limit point may differ from the true density matrix $P$. The remainder of the proof consists in establishing that for generic initial guess $P^{(0)}$, the limit point is in fact equal to $P$, not some other fixed point $P_f$.

As mentioned above, the conditions of Section \ref{subsec:genericity} ensure
that $A+\wt{B}[P_f]$ has a positive spectral gap for any fixed point
$P_f$, which in turn ensures that $\mf{F}$ is smooth near each of the
fixed points $P_f$. In Section \ref{subsec:linearizationFP}, we then carry out a linearization-based analysis similar to that of
Section \ref{subsec:linearLocal}, which reveals that all pathological fixed
points $P_f \neq P$ are unstable. To complement this perspective, in Section \ref{subsec:fpsp} we exhibit a functional that is monotone nonincreasing along the iterates, for which the fixed points are critical points, among which the true density matrix $P$ is the only local minimum.

In summary, at this point we have established that we have global
convergence to a fixed point and moreover that all fixed points but the
true density matrix $P$ are unstable. This picture is already strongly
suggestive that for a generic initial guess, we will never converge to a
``bad'' fixed point. In the language of dynamical systems, each bad fixed point has a local stable manifold of strictly positive codimension, hence of measure zero. If we were to have convergence to a bad fixed point, it would mean that for all $k$ sufficiently large, $P^{(k)}$ lies on one such local stable manifold for a bad fixed point $P_f \neq P$. What we want to show, then, is that it is impossible for the iteration map $\mf{F}$ to collapse a set of positive measure to a set of zero measure, i.e., that for $S$ of measure zero, $\mf{F}^{-1}(S)$ is of measure zero as well. The proof of this result, Lemma \ref{lem:preimage}, is rather involved and is given in Appendix \ref{sec:eggOnBarn}. A key difficulty is that $\mf{F}$ is not a diffeomorphism (which would render the lemma immediate), nor even is it continuous. Once established, by the above reasoning Lemma \ref{lem:preimage} completes the proof of Theorem
\ref{thm:main3}.

\subsection{Eigenvalue monotonicity}\label{subsec:eigenMono}
We now highlight a significant feature of Algorithm \ref{alg:acelineig},
which is the key for the proof of global convergence properties.
\begin{lemma}[Eigenvalue monotonicity]\label{lem:eigenMono}
For $i=1,\ldots,n$,
\[
\lambda_i^{(k)} := \lambda_i  \big\{ A+\wt{B}[P^{(k-1)}]\big\}
\]
is non-increasing in $k$.
\end{lemma}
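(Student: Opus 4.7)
The plan is to apply the Courant--Fischer minimax principle to $\lambda_i^{(k+1)}$ with a carefully chosen $i$-dimensional trial subspace, and then reduce the resulting Rayleigh bound back to $\lambda_i^{(k)}$ by combining two structural properties of the adaptive compression already established: the \emph{consistency} relation $\wt{B}[P]v = Bv$ for $v \in \mathrm{Im}(P)$ (equation~\eqref{eqn:Bconsistency} and Proposition~\ref{prop:axiomaticI}), and the \emph{semidefinite dominance} $B \preceq \wt{B}[P]$ (Lemma~\ref{lem:negdef}). The key move is to apply these two properties at two \emph{different} iterates.

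Concretely, write $H_k := A + \wt{B}[P^{(k-1)}]$, so that $\lambda_i^{(k)} = \lambda_i\{H_k\}$. By Definition~\ref{def:IterOp}, $\mathrm{Im}(P^{(k)})$ is spanned by eigenvectors of $H_k$ associated with its lowest $n$ eigenvalues, so I may pick orthonormal vectors $v_1^{(k)}, \ldots, v_n^{(k)}$ spanning $\mathrm{Im}(P^{(k)})$ with $H_k v_j^{(k)} = \lambda_j^{(k)} v_j^{(k)}$. For each $i \in \{1,\ldots,n\}$ set $S_i := \mathrm{span}\{v_1^{(k)}, \ldots, v_i^{(k)}\} \subset \mathrm{Im}(P^{(k)})$, a subspace of dimension $i$. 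Courant--Fischer applied to $\lambda_i^{(k+1)} = \lambda_i\{A+\wt{B}[P^{(k)}]\}$ with trial subspace $S_i$ gives
\[
\lambda_i^{(k+1)} \;\leq\; \max_{0\neq u \in S_i} \frac{u^*\!\left(A+\wt{B}[P^{(k)}]\right)u}{u^*u}.
\]
For any $u \in S_i \subset \mathrm{Im}(P^{(k)})$, consistency applied to $\wt{B}[P^{(k)}]$ yields $u^* \wt{B}[P^{(k)}] u = u^* B u$, while dominance applied to $\wt{B}[P^{(k-1)}]$ yields $u^* B u \leq u^* \wt{B}[P^{(k-1)}] u$. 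Chaining the two gives $u^*(A+\wt{B}[P^{(k)}])u \leq u^* H_k u$. Finally, because $S_i$ is spanned by eigenvectors of $H_k$ whose eigenvalues are all at most $\lambda_i^{(k)}$, the Rayleigh quotient of $H_k$ on $S_i$ is bounded by $\lambda_i^{(k)}$, closing the chain $\lambda_i^{(k+1)} \leq \lambda_i^{(k)}$.

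The main conceptual point — more a subtlety than an obstacle — is that neither consistency nor dominance alone is enough: one must apply consistency to $\wt{B}[P^{(k)}]$ (the operator appearing in $\lambda_i^{(k+1)}$) via the choice of trial subspace inside $\mathrm{Im}(P^{(k)})$, and simultaneously apply dominance to $\wt{B}[P^{(k-1)}]$ (the operator appearing in $\lambda_i^{(k)}$). This pairing is exactly what converts the Courant--Fischer bound on the ``new'' operator $A+\wt{B}[P^{(k)}]$ into a bound expressed via eigenvalues of the ``old'' operator $H_k$. No additional machinery beyond what is already in Section~\ref{sec:properties} is required; this monotonicity is essentially a structural corollary of Lemma~\ref{lem:negdef}.
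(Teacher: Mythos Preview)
Your proof is correct and follows essentially the same route as the paper's: both choose the trial subspace $S_i = \mathrm{span}\{v_1^{(k)},\ldots,v_i^{(k)}\} \subset \mathrm{Im}(P^{(k)})$ in Courant--Fischer, use consistency of $\wt{B}[P^{(k)}]$ on this subspace to pass to $A+B$, and then Lemma~\ref{lem:negdef} to pass to $A+\wt{B}[P^{(k-1)}]$.
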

\begin{proof}
Let $v_i^{(k)}\in \mathrm{Im}(P^{(k)})$ be orthonormal eigenvectors of $ A+\wt{B}[P^{(k-1)}]$ corresponding to the eigenvalues $\lambda_i^{(k)}$ for $i=1,\ldots,n$, and let $S_i^{(k)} = \mathrm{span}\{v_1^{(k)} ,\ldots, v_i^{(k)} \}$. Then we compute, for $i=1,\ldots,n$,

\begin{eqnarray*}
\lambda_{i}^{(k+1)} & = & \min_{\dim S=i}\max_{u\in S\backslash\{0\}}\frac{u^{*}\big(A+\wt{B}[P^{(k)}]\big)u}{u^{*}u}\\
 & \leq & \max_{u\in S_i^{(k)} \backslash\{0\}}\frac{u^{*}\big(A+\wt{B}[P^{(k)}]\big)u}{u^{*}u}\\
 & \overset{\mathrm{(i)}}{=} & \max_{u\in S_i^{(k)} \backslash\{0\}}\frac{u^{*}\left(A+B\right)u}{u^{*}u}\\
 & \overset{\mathrm{(ii)}}{\leq} & \max_{u\in S_i^{(k)} \backslash\{0\}}\frac{u^{*}\big(A+\wt{B}[P^{(k-1)}]\big)u}{u^{*}u}\\
 & = & \lambda_{i}^{(k)},
\end{eqnarray*}
where (i) follows from the fact that $B \equiv \wt{B}[P^{(k)}]$ on $\mathrm{Im}(P^{(k)})\supset S_i^{(k)}$ and (ii) follows from Lemma \ref{lem:negdef}. This completes the proof.
\end{proof}

Thus we may think of Algorithm \ref{alg:acelineig} as performing a
descent on the bottom $n$ eigenvalues of $ A+\wt{B}[Q]$ as $Q=P^{(k)}$
is updated iteratively. In order to achieve global convergence, we would need that these eigenvalues are globally minimized at $Q=P$. Indeed, this is the case:
\begin{lemma}[Global eigenvalue minimality]\label{lem:globalEigenMin}
For $i=1,\ldots,n$ and all density matrices $Q$,
\[
\lambda_i =  \lambda_i  \big\{ A+\wt{B}[P]\big\} \leq \lambda_i  \big\{ A+\wt{B}[Q]\big\}.
\]
\end{lemma}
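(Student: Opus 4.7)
The plan is to reduce the statement to a direct application of the Courant--Fischer min-max principle, using results already established in the paper. The equality $\lambda_i = \lambda_i\{A+\wt{B}[P]\}$ for $i=1,\ldots,n$ is exactly the content of Lemma~\ref{lem:orderzero}, so no further work is needed there. It then only remains to prove the inequality $\lambda_i\{A+B\} \le \lambda_i\{A+\wt{B}[Q]\}$ for all density matrices $Q$ and all $i=1,\ldots,n$.

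The main observation is that Lemma~\ref{lem:negdef} (equivalently, the second assertion of Theorem~\ref{thm:main1}) applies to \emph{every} rank-$n$ projector $Q$, not just to the true density matrix $P$. It gives $\wt{B}[Q] - B \succeq 0$, and therefore
\[
A + \wt{B}[Q] \;\succeq\; A + B.
\]
From here a standard consequence of the Courant--Fischer min-max characterization, exactly as used in the chain of inequalities in the proof of Lemma~\ref{lem:orderzero}, yields $\lambda_i\{A+\wt{B}[Q]\} \ge \lambda_i\{A+B\} = \lambda_i$ for every $i$. Combining with Lemma~\ref{lem:orderzero} completes the proof.

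There is essentially no obstacle: the content of the statement is already packaged inside Lemma~\ref{lem:negdef}, and the argument is a two-line combination of that lemma with Weyl's monotonicity principle. The conceptual point worth emphasizing, however, is why the proof is this short: the adaptive compression map has the special structural property that $B\preceq\wt{B}[Q]$ \emph{uniformly} in $Q$, not merely at the fixed point $P$. This is what allows the monotonic descent in Lemma~\ref{lem:eigenMono} to be anchored at the correct global minimizer $P$, rather than at some spurious fixed point, and it is what will ultimately underpin the global convergence analysis in the subsequent sections.
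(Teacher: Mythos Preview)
Your proof is correct and takes essentially the same approach as the paper: both use Lemma~\ref{lem:negdef} (that $\wt{B}[Q]\succeq B$ for every rank-$n$ projector $Q$) together with the Courant--Fischer min-max principle. The only cosmetic difference is that the paper writes out the min-max argument explicitly with the test subspace spanned by the bottom $i$ eigenvectors of $A+\wt{B}[Q]$, whereas you package the same step as Weyl's monotonicity.
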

\begin{proof}
Let $v_i^{Q}\in \mathrm{Im}(\mf{F}(Q))$ be orthonormal eigenvectors of $ A+\wt{B}[Q]$ corresponding to the eigenvalues $\lambda_i  \big\{ A+\wt{B}[Q]\big\}$ for $i=1,\ldots,n$, and let $S_i^{Q} = \mathrm{span}\{v_1^{Q} ,\ldots, v_i^{Q} \}$. Again we compute, for $i=1,\ldots,n$,

\begin{eqnarray*}
\lambda_{i} & = & \min_{\dim S=i}\max_{u\in S\backslash\{0\}}\frac{u^{*}\left(A+B\right)u}{u^{*}u}\\
 & \leq & \max_{u\in S_i^{Q}\backslash\{0\}}\frac{u^{*}\left(A+B\right)u}{u^{*}u}\\
 & \leq & \max_{u\in S_i^{Q}\backslash\{0\}}\frac{u^{*}\big(A+\wt{B}[Q]\big)u}{u^{*}u}\\
 & = & \lambda_i  \big\{ A+\wt{B}[Q]\big\}.
\end{eqnarray*}
\end{proof}

We now examine some consequences of eigenvalue monotonicity with a view
toward establishing a global convergence result. First, from Lemma
\ref{lem:eigenMono} we have the immediate corollary.
\begin{corollary}[Eigenvalue convergence]\label{cor:eigenConv}
$\lim_{k\rightarrow\infty} \lambda_i ^{(k)}$ exists for $i=1,\ldots,n$.
\end{corollary}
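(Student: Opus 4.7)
The plan is to invoke the monotone convergence theorem for real sequences. By Lemma \ref{lem:eigenMono}, for each fixed $i \in \{1,\ldots,n\}$, the real sequence $\{\lambda_i^{(k)}\}_{k\geq 1}$ is non-increasing. Thus it suffices to exhibit a lower bound uniform in $k$.

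For the lower bound, I would apply Lemma \ref{lem:globalEigenMin} with $Q = P^{(k-1)}$, which yields
\[
\lambda_i^{(k)} = \lambda_i\big\{A+\wt{B}[P^{(k-1)}]\big\} \geq \lambda_i\big\{A+\wt{B}[P]\big\} = \lambda_i
\]
for every $k\geq 1$. Hence $\{\lambda_i^{(k)}\}_k$ is bounded below by the $i$-th true eigenvalue $\lambda_i$ of $A+B$.

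Combining monotonicity with the uniform lower bound and applying the monotone convergence theorem, $\lim_{k\to\infty} \lambda_i^{(k)}$ exists (in $\mathbb{R}$) for each $i=1,\ldots,n$. There is no real obstacle here; the content of the corollary is entirely contained in the two preceding lemmas, and the argument is a one-line application of the monotone convergence theorem. (Alternatively, a lower bound could be obtained without invoking Lemma \ref{lem:globalEigenMin} by noting that $\wt{B}[P^{(k-1)}] \succeq B$ via Lemma \ref{lem:negdef} and applying the Courant--Fischer minimax characterization directly, but routing through Lemma \ref{lem:globalEigenMin} is cleanest.)
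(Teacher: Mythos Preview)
Your argument is correct and matches the paper's approach: the paper states the corollary as immediate from Lemma~\ref{lem:eigenMono} without writing out a proof, and you have simply made explicit the implicit lower bound (via Lemma~\ref{lem:globalEigenMin}) needed to invoke monotone convergence.
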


\begin{remark}
The convergence rate of the eigenvalues $\lambda_i^{(k)}$ is not yet known at this stage in the proof. However, once global convergence of $P^{(k)}$ is established, it will follow that the asymptotic linear rate of convergence of the eigenvalues $\lambda_i^{(k)}$ is twice that of $P^{(k)}$ (which is in turn established in Theorem \ref{thm:main2}). This is the case because the true density matrix $P$ is a stationary point for the sum of the lowest $k$ eigenvalues of $A+\wt{B}[P]$ for any $k \leq n$.
\end{remark}

From this corollary and a refinement of earlier arguments, we derive the following result, which will be instrumental in establishing global convergence. The main idea of this result is that a small change in eigenvalues across one iteration is only possible if the density matrix changes by a correspondingly small amount.

\begin{lemma}\label{lem:densMatChange}
There exists a constant $C>0$ (depending only on $B,n$) such that
\[
\Vert P^{(k)} - P^{(k-1)} \Vert_2 \leq C \sqrt{\delta^{(k)}}
\]
for all $k$, where
\[
\delta^{(k)} := \sum_{i=1}^n \left( \lambda_i^{(k)}- \lambda_i^{(k+1)} \right).
\]
It follows (by Corollary \ref{cor:eigenConv}) that $\Vert P^{(k)} - P^{(k-1)} \Vert_2 \rightarrow 0$ as $k \rightarrow \infty$.
\end{lemma}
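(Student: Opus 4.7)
The plan is to lower-bound $\delta^{(k)}$ by a positive multiple of $\|P^{(k)}-P^{(k-1)}\|_F^2$ and then invoke $\|\cdot\|_2 \le \|\cdot\|_F$. The starting point is the trace identity
\[
\sum_{i=1}^n \lambda_i^{(k)} = \mathrm{tr}\bigl(P^{(k)}(A+\wt{B}[P^{(k-1)}])\bigr),
\]
which follows because $P^{(k)}$ projects onto the span of the bottom-$n$ eigenvectors of $A+\wt{B}[P^{(k-1)}]$. Combined with Ky Fan's trace--minimax characterization of the bottom-$n$ eigenvalue sum, we also have
\[
\sum_{i=1}^n \lambda_i^{(k+1)} \le \mathrm{tr}\bigl(P^{(k)}(A+\wt{B}[P^{(k)}])\bigr) = \mathrm{tr}\bigl(P^{(k)}(A+B)\bigr),
\]
where the last equality uses the consistency $\wt{B}[P^{(k)}]\equiv B$ on $\mathrm{Im}(P^{(k)})$. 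Subtracting yields the key bound $\delta^{(k)} \ge \mathrm{tr}\bigl(P^{(k)}(\wt{B}[P^{(k-1)}] - B)\bigr)$.

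The heart of the argument is to establish a uniform operator inequality of the form $\wt{B}[Q] - B \succeq c\,(I-Q)$ for every $Q\in\mathcal{D}$, with $c>0$ depending only on $B$ and $n$. By Lemma \ref{lem:negdef} the matrix $\wt{B}[Q]-B$ is positive semidefinite, and by construction it vanishes on $\mathrm{Im}(Q)$; hence it has the form $Q^\perp T(Q) Q^\perp$ for some $T(Q)\succeq 0$ acting on $\mathrm{Im}(Q)^\perp$. I claim $T(Q)$ is in fact positive \emph{definite}: if $\wt{B}[Q]v = Bv$ for some $v\in\mathrm{Im}(Q)^\perp$, then using $\wt{B}[Q]=B(QBQ)^\dagger B$ and invertibility of $B\prec 0$ gives $v=(QBQ)^\dagger Bv$, which lies in $\mathrm{Im}(Q)$, forcing $v=0$. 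Smoothness of $Q\mapsto \wt{B}[Q]$ (Remark \ref{rem:compressionContinuity}) together with compactness of $\mathcal{D}$ then promote this pointwise positivity to the uniform bound $c := \inf_{Q\in\mathcal{D}} \lambda_{\min}(T(Q)) > 0$.

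Specializing to $Q=P^{(k-1)}$ and using the standard projector identity $\mathrm{tr}\bigl(P^{(k)}(I-P^{(k-1)})\bigr) = n - \mathrm{tr}(P^{(k)}P^{(k-1)}) = \tfrac12\|P^{(k)}-P^{(k-1)}\|_F^2$ (valid since both $P^{(k)}$ and $P^{(k-1)}$ are rank-$n$ orthogonal projectors), I obtain
\[
\delta^{(k)} \;\ge\; c\,\mathrm{tr}\bigl(P^{(k)}(I-P^{(k-1)})\bigr) \;=\; \tfrac{c}{2}\,\|P^{(k)}-P^{(k-1)}\|_F^2,
\]
which yields the claim with $C = \sqrt{2/c}$ depending only on $B$ and $n$. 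The final statement $\|P^{(k)}-P^{(k-1)}\|_2 \to 0$ is then immediate from Corollary \ref{cor:eigenConv} since $\delta^{(k)}$ is a telescoping increment of a convergent sequence.

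The main obstacle is verifying the uniform positivity of $c$, i.e., that the smallest eigenvalue of $\wt{B}[Q]-B$ restricted to $\mathrm{Im}(Q)^\perp$ is bounded away from zero uniformly over the Grassmannian $\mathcal{D}$. Pointwise positive definiteness is an easy algebraic computation as sketched above, but the uniform bound rests essentially on the compactness of $\mathcal{D}$. Everything else reduces to the Ky Fan minimax principle, the consistency of adaptive compression, and a short Frobenius-norm identity for orthogonal projectors.
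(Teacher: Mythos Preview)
Your proposal is correct and follows essentially the same approach as the paper: both derive $\delta^{(k)} \ge \mathrm{Tr}\bigl(P^{(k)}(\wt{B}[P^{(k-1)}]-B)\bigr)$ via the Ky Fan variational principle and consistency of $\wt{B}$, then establish the uniform operator inequality $\wt{B}[Q]-B \succeq c\,(I-Q)$ by pointwise positive definiteness on $\mathrm{Im}(Q)^\perp$ combined with compactness of $\mathcal{D}$, and finish with the Frobenius-norm identity for projectors. The only cosmetic difference is that the paper packages the compactness step by looking at $\lambda_{\min}\{Q + (\wt{B}[Q]-B)\}$ (which is manifestly continuous in $Q$), whereas you work with $\lambda_{\min}(T(Q))$ directly; to make your version airtight you should note that $\lambda_{\min}(T(Q)) = \lambda_{n+1}\{\wt{B}[Q]-B\}$, which is continuous in $Q$ by Remark~\ref{rem:compressionContinuity} and Weyl's inequality.
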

\begin{proof}
As in the proof of Lemma \ref{lem:eigenMono}, let $v_i^{(k)}\in \mathrm{Im}(P^{(k)})$ be orthonormal eigenvectors of $ A+\wt{B}[P^{(k-1)}]$ corresponding to the eigenvalues $\lambda_i^{(k)}$ for $i=1,\ldots,n$, and let $S_i^{(k)} = \mathrm{span}\{v_1^{(k)} ,\ldots, v_i^{(k)} \}$.

\begin{eqnarray*}
\sum_{i=1}^n \lambda_i^{(k+1)} & = \ & \inf_{u_1,\ldots,u_n\ \mathrm{orthonormal}} \left\{ \sum_{i=1}^n u_i^* \big(A+\wt{B}[P^{(k)}]\big)u_i \right\} \\
& \leq \ & \sum_{i=1}^n \big(v_i^{(k)}\big)^* \big(A+\wt{B}[P^{(k)}]\big) v_i^{(k)} \\ 
& =\  & \sum_{i=1}^n \big(v_i^{(k)}\big)^* \big(A+B\big) v_i^{(k)} \\
& =\  & \sum_{i=1}^n \big(v_i^{(k)}\big)^* \big(A+\wt{B}[P^{(k-1)}]\big) v_i^{(k)} - 
\sum_{i=1}^n \big(v_i^{(k)}\big)^* \big(\wt{B}[P^{(k-1)}] - B\big) v_i^{(k)} \\
& =\ &  \sum_{i=1}^n \lambda_i^{(k)} - 
\sum_{i=1}^N \big(v_i^{(k)}\big)^* P^{(k)} \big(\wt{B}[P^{(k-1)}] - B\big) v_i^{(k)} \\
& = \ & \sum_{i=1}^n \lambda_i^{(k)} -
\mathrm{Tr}\left[P^{(k)} \big(\wt{B}[P^{(k-1)}] - B \big) \right],
\end{eqnarray*}
where `$\mathrm{Tr}$' denotes the matrix trace.
We have
\begin{equation}\label{eqn:traceIneq}
\mathrm{Tr}\left[P^{(k)} \big(\wt{B}[P^{(k-1)}] - B \big) P^{(k)} \right]
\leq \sum_{i=1}^n \left(\lambda_i^{(k)} - \lambda_i^{(k+1)} \right) = \delta^{(k)}.
\end{equation}
At this point we should hope that the left-hand side of \eqref{eqn:traceIneq} provides an upper bound for some measure of the distance between $P^{(k)}$ and $P^{(k-1)}$, and indeed this will be the case.

We first prove the following lemma.
\begin{lemma}\label{lem:rotate}
There exists $t>0$ depending only on $B,n$ such that 
\begin{equation*}
\mathrm{Tr} \left[ R \big(\wt{B}[Q]-B \big) \right]
\geq t\cdot \mathrm{Tr} \left[ R \big(I - Q \big) \right]
\end{equation*}
for all density matrices $Q$ and $R$.
\end{lemma}
\begin{proof}
Note that 
\[
\lambda_{\min} \left\{ Q+ \big(\wt{B}[Q]-B \big) \right\} > 0,
\]
for all density matrices $Q$. By the continuity of $\wt{B}$ on density matrices and $\lambda_{\min}$ on Hermitian matrices, as well as the compactness of the space of density matrices, it follows that there exists $t>0$ such that 
\[
Q+ \big(\wt{B}[Q]-B \big) \succeq t
\]
for all density matrices $Q$. Furthermore, we can write
\[
\wt{B}[Q]-B = (I-Q) \left[ Q+ \big(\wt{B}[Q]-B \big) \right] (I-Q) \succeq t(I-Q).
\]
Now the trace of a product of positive semidefinite matrices is nonnegative, so 
\[
\mathrm{Tr} \left( R \big[(\wt{B}[Q]-B) - t(I-Q)  \big] \right) \geq 0,
\]
for all density matrices $Q,R$, which yields the lemma.
\end{proof}

We now resume the proof of Lemma \ref{lem:densMatChange}. Let $R = P^{(k)}$ and $Q = P^{(k-1)}$ in Lemma \ref{lem:rotate} to obtain
\begin{equation*}
\mathrm{Tr} \left[ P^{(k)} \big(\wt{B}[P^{(k-1)}]-B \big)\right]
\geq t\cdot \mathrm{Tr} \left[ P^{(k)} \big(I - P^{(k-1)} \big) \right],
\end{equation*}
and combine with \eqref{eqn:traceIneq} to give
\begin{equation*}
\mathrm{Tr} \left[ P^{(k)} \big(I - P^{(k-1)} \big) \right]
\leq \alpha \delta^{(k)},
\end{equation*}
where $\alpha := t^{-1} > 0$. Now
\begin{equation*}
\mathrm{Tr} \left[ P^{(k)} \big(I - P^{(k-1)} \big) \right] = \mathrm{Tr} \left[ P^{(k)} - P^{(k)} P^{(k-1)} \right] = n -  \mathrm{Tr} \left[ P^{(k)} P^{(k-1)} \right],
\end{equation*}
so in fact we have
\begin{equation}\label{eqn:traceIneq2}
\mathrm{Tr} \left[ P^{(k)} P^{(k-1)}  \right]
\geq n - \alpha \delta^{(k)}.
\end{equation}
To conclude the proof, observe
\begin{eqnarray*}
\norm{ P^{(k)}- P^{(k-1)}}_2 ^2 &\le& \norm{ P^{(k)}- P^{(k-1)}}^2_F \\
&=&  \mathrm{Tr} \left[ (P^{(k)} - P^{(k-1)})(P^{(k)} - P^{(k-1)}) \right] \\
&=& 2n - 2 \cdot \mathrm{Tr} \left[ P^{(k)} P^{(k-1)}\right] \\ 
&\le & 2  \alpha \delta^{(k)},
\end{eqnarray*}
where we have used \eqref{eqn:traceIneq2} in the last line.
\end{proof}

Note carefully that Lemma \ref{lem:densMatChange} \textit{does not}
imply that the sequence $P^{(k)}$ is convergent. In particular, we do
not yet see that $P^{(k)}$ is Cauchy; we are only able to bound the
change in density matrix over a single iteration.
However, Lemma \ref{lem:densMatChange} does establish that for $k$
large, the density matrix $P^{(k)}$ is almost fixed by $\mf{F}$. 
Note that any fixed point $P_f$ is a projector of the form $P_f =
\sum_{i=1}^n u_i u_i^*$, where the $u_i$'s are eigenvectors of $A+B$. This
motivates the following lemma, which implies that for $k$ large,
$P^{(k)}$ is close to some point of this form.

\begin{lemma}\label{lem:almostEigen}
There exists a constant $C>0$ depending only on $A,B,n$ such that if
$\Vert \mf{F}(Q) - Q \Vert_2 \leq \epsilon$ for any density matrix $Q$,
then $Q = \sum_{i=1}^n u_i u_i ^* + M$, where the $u_i$'s are orthonormal eigenvectors of $A+B$ and $\Vert M \Vert _2 \leq C\epsilon$. 
\end{lemma}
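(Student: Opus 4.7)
My plan is to reduce the near-fixed-point condition to a commutator bound $\|[Q,A+B]\|_2 = O(\epsilon)$ and then to round $Q$ to an exact invariant-subspace projector of $A+B$ via a block-diagonal analysis in the eigenbasis of $A+B$.

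First I would extract approximate invariance of $\mathrm{Im}(Q)$ under $A+B$. Let $Q' = \mf{F}(Q)$ and let $v_1,\ldots,v_n$ be orthonormal eigenvectors of $A+\wt{B}[Q]$ spanning $\mathrm{Im}(Q')$, with eigenvalues $\mu_i$. Set $w_i = Qv_i \in \mathrm{Im}(Q)$; since $v_i \in \mathrm{Im}(Q')$ and $\|Q-Q'\|_2 \leq \epsilon$, we have $\|w_i - v_i\|_2 \leq \epsilon$, and for small $\epsilon$ the $w_i$ form a basis of $\mathrm{Im}(Q)$. The consistency $\wt{B}[Q]w_i = B w_i$ on $\mathrm{Im}(Q)$ gives
\[
(A+B) w_i = (A+\wt{B}[Q]) w_i = \mu_i v_i + (A+\wt{B}[Q])(w_i - v_i) = \mu_i w_i + O(\epsilon),
\]
where the implicit constant depends only on $\|A\|_2,\|B\|_2,n$ (using $\|\wt{B}[Q]\|_2 \leq \|B\|_2$ from Lemma~\ref{lem:negdef} and $|\mu_i| \leq \|A\|_2+\|B\|_2$). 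Since $\mu_i w_i \in \mathrm{Im}(Q)$, this yields $\|(I-Q)(A+B)Q\|_2 = O(\epsilon)$, which via the Hermitian identity $[Q,A+B] = Q(A+B)(I-Q) - (I-Q)(A+B)Q$ gives $\|[Q,A+B]\|_2 = O(\epsilon)$.

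Next I would transfer to the eigenbasis of $A+B$. Diagonalizing $A+B = U\Lambda U^*$ and setting $\widetilde Q := U^* Q U$, the identity $[\widetilde Q,\Lambda]_{ij} = (\lambda_j-\lambda_i)\widetilde Q_{ij}$ implies $|\widetilde Q_{ij}| = O(\epsilon/\kappa)$ whenever $\lambda_i \neq \lambda_j$, where $\kappa>0$ is the minimum spacing between distinct eigenvalues of $A+B$ (a quantity depending only on $A,B$). Letting $D$ denote the block-diagonal part of $\widetilde Q$ relative to the eigenspace decomposition of $A+B$, I get $\|\widetilde Q - D\|_F = O(\epsilon)$. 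The Hermitian matrix $D$ is $O(\epsilon)$-close to idempotent and has integer trace $n$, so each diagonal block $D_k$ has eigenvalues within $O(\epsilon)$ of $\{0,1\}$, and for $\epsilon$ below a fixed threshold the counts $m_k$ of eigenvalues near $1$ are uniquely determined with $\sum_k m_k = n$. Rounding each $D_k$ to the rank-$m_k$ spectral projector onto its top eigenvectors and summing yields a block-diagonal rank-$n$ projector $R$ with $\|\widetilde Q - R\|_F = O(\epsilon)$. Transforming back, $URU^* = \sum_{i=1}^n u_i u_i^*$ for orthonormal eigenvectors $u_1,\ldots,u_n$ of $A+B$, and $M := Q - \sum_i u_i u_i^*$ satisfies $\|M\|_2 = O(\epsilon)$. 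For $\epsilon$ above the threshold the conclusion holds trivially by enlarging $C$, since $\|M\|_2 \leq 2$ always.

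The main obstacle is the rounding step when $A+B$ has degenerate eigenvalues, because the commutator bound provides no control on entries of $\widetilde Q$ within a single eigenspace of $A+B$, so one cannot simply diagonalize $\widetilde Q$ and pick out coordinates. The crux is that the block-diagonal part $D$ inherits near-idempotency from $\widetilde Q$, and its integer trace $n$ forces a unique rank allocation across the blocks once $\epsilon$ is small enough, after which the rounding is a routine spectral decomposition within each block.
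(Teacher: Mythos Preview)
Your proposal is correct and arrives at the same conclusion by a genuinely different route in its second half. Both arguments start by extracting approximate eigenvectors of $A+B$ from the near-fixed-point condition: the paper observes directly that the orthonormal eigenvectors $w_i$ of $A+\wt{B}[Q]$ spanning $\mathrm{Im}(\mf{F}(Q))$ satisfy the identity
\[
(A+B)w_i - \mu_i w_i = -(\wt{B}[Q]-B)(\mf{F}(Q)-Q)w_i,
\]
giving $\|(A+B)w_i - \mu_i w_i\|\le \|B\|_2\,\epsilon$, while you obtain the same residual bound after projecting by $Q$; your derivation is a touch more indirect but equivalent.

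Where the two proofs part ways is in how they pass from ``approximate eigenvectors'' to ``close to a genuine invariant-subspace projector.'' The paper carries out a Davis--Kahan-style analysis on each $w_i$ individually: expanding $w_i$ in the eigenbasis of $A+B$, it shows that the mass of $w_i$ outside the eigenspace nearest $\mu_i$ is $O(\epsilon)$, so each $w_i$ is $O(\epsilon)$-close to a vector $\tilde u_i$ lying in a single eigenspace, and then argues that the span of the $\tilde u_i$ yields the desired invariant projector. You instead aggregate the per-vector information into the operator bound $\|[Q,A+B]\|_2=O(\epsilon)$ and then round $Q$ in the eigenbasis of $A+B$: the off-block entries of $\widetilde Q$ are $O(\epsilon/\kappa)$, the block-diagonal part $D$ inherits near-idempotency and the exact trace $n$, and spectral rounding block-by-block produces the rank-$n$ invariant projector. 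Your approach is somewhat more operator-theoretic and handles degenerate eigenvalues of $A+B$ cleanly via the block structure (as you note), whereas the paper handles degeneracy by choosing $\delta$ smaller than the minimum gap between \emph{distinct} eigenvalues and then verifying that the resulting $\tilde u_i$ are linearly independent. Both treatments are valid; the paper's is slightly more elementary in that it never forms the commutator, while yours makes the structure of the rounding step more transparent.
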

\begin{proof}
Write $\mf{F}(Q) = \sum_{i=1}^n w_i w_i ^*$, where $w_1,\ldots,w_n$ are
orthonormal eigenvectors of $A+\wt{B}[Q]$ with corresponding eigenvalues
$\mu_1 \leq \cdots \leq \mu _n$. Let $z_i = Q w_i$ for $i=1,\ldots,n$.
Observe that 
\[
\left[(A+B)+(\wt{B}[Q]-B)(\mf{F}(Q)-Q)\right] w_i = \mu_i w_i
\]
for $i=1,\ldots,n$, since $(\wt{B}[Q]-B)Q = 0$ and $\mf{F}(Q)w_i = w_i$. Therefore
\begin{eqnarray*}
\Vert (A+B)w_i - \mu_i w_i \Vert & =& \Vert (\wt{B}[Q]-B)(\mf{F}(Q)-Q)w_i\Vert \\
& \leq & \Vert \wt{B}[Q]-B\Vert_2 \Vert\mf{F}(Q)-Q \Vert_2.
\end{eqnarray*}
We assume $\Vert\mf{F}(Q)-Q \Vert_2 \leq \epsilon$ as in the statement of the theorem, and recall $\Vert \wt{B}[Q]-B\Vert_2 \leq \Vert B\Vert_2$, so we have shown that 
\begin{equation}\label{eqn:almostEigenvector}
\Vert (A+B)w_i - \mu_i w_i \Vert \leq C\epsilon,
\end{equation}
where $C = \Vert B\Vert_2$.
In other words, if $\epsilon$ is small, then $w_i$ nearly satisfies the
condition of being eigenvectors of $A+B$ with the corresponding eigenvalue $\mu_i$. We now aim to show that this implies that each $w_i$ is in fact close to some eigenvector of $A+B$. 
We remark that the discussion below is related to the ``sin $\theta$ theorem'' of Davis and Kahan
\cite{DavisKahan1970}, which characterizes the relation between 
the error of an approximate eigenvector and its residual.

To this end, let $v_1,\ldots,v_N$ be orthonormal eigenvectors of $A+B$ with corresponding eigenvalues $\lambda_1 \leq \cdots \leq \lambda_N$, and write $w_i = \sum_{j=1}^N c_{ij}v_j$. Then 
\[
\Vert (A+B)w_i - \mu_i w_i \Vert^2 = \left\Vert \sum_{j=1}^N c_{ij} (\lambda_j - \mu_i) v_j \right\Vert^2 = \sum_{j=1}^N \vert c_{ij}\vert^2 \vert \lambda_j -\mu_i \vert^2.
\]
Combining with \eqref{eqn:almostEigenvector} yields
\begin{equation}\label{eqn:productIneq}
\vert c_{ij}\vert^2 \vert \lambda_j -\mu_i \vert^2 \leq C^2 \epsilon^2.
\end{equation}

Let $\delta >0$ be smaller than the gap between any pair of \textit{distinct} eigenvalues of $A+B$. (Note carefully that this is still possible even if $A+B$ has repeated eigenvalues.) Fix $i$ for the moment, and decompose
\[
w_i = \sum_{\{j\,:\,\vert \lambda_j - \mu_i \vert > \delta \} } c_{ij} v_j
+ \underbrace{\sum_{\{j\,:\,\vert \lambda_j - \mu_i \vert \leq \delta \}} c_{ij} v_j}_{=: \tilde{u}_i}. \\
\]
Notice that if $\vert \lambda_j - \mu_i \vert > \delta$, then $\vert c_{ij}\vert^2 \leq C^2 \epsilon^2 / \delta^2$ by \eqref{eqn:productIneq}. Thus 
\[
\Vert w_i - \tilde{u}_i \Vert^2 = \sum_{\{j\,:\,\vert \lambda_j - \mu_i \vert > \delta \}} \vert c_{ij}\vert^2 \leq \frac{N C^2}{\delta^2} \epsilon^2.
\]
In particular, for $\epsilon$ sufficiently small, $\Vert w_i -\tilde{u}_i\Vert <1$, which implies that $\tilde{u}_i \neq 0$.

By the definition of $\delta$, there is at most one element in the
set $\{\lambda_j\,:\,\vert \lambda_j -\mu _i \vert \leq \delta \}$. But since $\tilde{u}_i \neq 0$, there must also be at least one element. We denote this element by $\lambda[i]$. Observe that $\tilde{u}_i$ is in the $\lambda[i]$-eigenspace of $A+B$.

We have established (for a possibly enlarged constant $C$ depending only
on $A,B$) that if $\epsilon$ is sufficiently small, then 
\[
\Vert w_i - \tilde{u}_i \Vert \leq C \epsilon.
\]
Then the $\tilde{u}_i$ must be linearly independent for $\epsilon$ sufficiently small. Moreoever, since the $w_i$ are orthonormal, this implies (possibly enlarging $C$ once again) that
\[
\left\Vert \mf{F}(Q) - \tilde{U} (\tilde{U}^* \tilde{U})^{-1} \tilde{U}^* \right\Vert_2 =
\left\Vert W(W^* W)^{-1} W^* - \tilde{U} (\tilde{U}^* \tilde{U})^{-1} \tilde{U}^* \right\Vert_2
\leq C \epsilon
\]
for $\epsilon$ sufficiently small, where
$\tilde{U}:=[\tilde{u}_1,\ldots,\tilde{u}_n]$, so $\tilde{U}
(\tilde{U}^* \tilde{U})^{-1} \tilde{U}^*$ is the orthogonal projector
onto the span of the $\tilde{u}_i$, and likewise $W:=[w_1,\ldots,w_n]$.
Now the $\tilde{u}_i$'s are unnormalized eigenvectors of $A+B$ with
possibly repeated eigenvalues, hence possibly not orthonormal or even
orthogonal. However, $\mathrm{span}\{\tilde{u}_1,\ldots,\tilde{u}_n\}$
is invariant under $A+B$, hence can also be endowed with an orthonormal
basis of eigenvectors $u_1,\ldots,u_n$ of $A+B$. This yields the equivalent orthogonal projector $\sum_{i=1}^n u_i u_i^*$. Now since $\Vert \mf{F}(Q)-Q\Vert_2 \leq \epsilon$, this means that (enlarging $C$ again)
\[
\left\Vert Q - \sum_{i=1}^n u_i u_i^* \right\Vert_2 \leq C \epsilon
\]
for $\epsilon$ sufficiently small.

This establishes the statement of the lemma under the condition that
$\epsilon$ is assumed sufficiently small. But since the space of density
matrices is compact, there exists $K>0$ such that $\Vert Q - \sum_{i=1}^n v_i v_i^* \Vert_2 \leq K$ for any density matrix $Q$. By enlarging $C$ sufficiently the lemma is proved.
\end{proof}

\subsection{Genericity assumptions}\label{subsec:genericity} We will
impose some assumptions that will ensure  that $\mf{F}$ has finitely
many fixed points and that at each fixed point $P_f$, $A+\wt{B}[P_f]$
has a spectral gap, so that $\mf{F}(P_f)$ can be defined canonically. We
will argue that these assumptions hold generically, i.e., can be made to
hold by an arbitrarily small perturbation of the eigenvalue problem \eqref{eqn:lineig}. Our genericity assumptions will allow us (1) to prove that $P^{(k)}$ converges to a fixed point and (2) to perform a first-order analysis of $P_f$ near each fixed point.

\begin{assumption}\label{assump:distinctEigen}
Assume that $A+B$ has distinct eigenvalues $\lambda _1 < \cdots <\lambda_N$ corresponding to orthonormal eigenvectors $v_1,\ldots,v_N$.
\end{assumption}

This can be guaranteed by replacing $A$ or $B$ with a suitable arbitrarily small random perturbation of $A$ or $B$ (see, e.g., Section 1.3 of \cite{TaoRMT}).

\begin{assumption}\label{assump:spectralGap}
For $\tau:\{1,\ldots,n\}\rightarrow \{1,\ldots,N\}$ increasing, let $P_\tau = \sum_{i=1}^n v_{\tau(i)} v_{\tau(i)}^*$, and let $S_\tau = \mathrm{Im}(P_\tau)$. Assume that for all such $\tau$,
\[
\lambda_{\tau(n)} \neq \lambda_{\min} \left\{ \big(A+\wt{B}[P_{\tau}]\big)\big\vert_{S_\tau ^\perp} \right\},
\]
or, equivalently,
\[
\lambda_{\max} \left\{ \big(A+\wt{B}[P_{\tau}]\big)\big\vert_{S_\tau } \right\} \neq \lambda_{\min} \left\{ \big(A+\wt{B}[P_{\tau}]\big)\big\vert_{S_\tau ^\perp} \right\}.
\]
\end{assumption}

We now provide some interpretation for Genericity Assumption \ref{assump:spectralGap}. If 
\[
\lambda_{\tau(n)} < \lambda_{\min} \left\{ \big(A+\wt{B}[P_{\tau}]\big)\big\vert_{S_\tau ^\perp} \right\},
\]
then $P_{\tau}$ is a fixed point of $\mf{F}$. Moreover,
$A+\wt{B}[P_{\tau}]$ has a positive spectral gap, so $\mf{F}(P_{\tau})$
is canonically defined. Meanwhile, if 
\[
\lambda_{\tau(n)} > \lambda_{\min} \left\{ \big(A+\wt{B}[P_{\tau}]\big)\big\vert_{S_\tau ^\perp} \right\},
\]
then $P_{\tau}$ is \textit{definitely not} a fixed point of $\mf{F}$ (though it does not necessarily follow that $A+\wt{B}[P_{\tau}]$ has a positive spectral gap). Lastly, if 
\[
\lambda_{\tau(n)} = \lambda_{\min} \left\{ \big(A+\wt{B}[P_{\tau}]\big)\big\vert_{S_\tau ^\perp} \right\},
\]
then $A+\wt{B}[P_{\tau}]$ has zero spectral gap, and $P_{\tau}$ \textit{may or may not} be a fixed point, depending on the arbitrary choice made for $\mf{F}(P_{\tau})$. This is precisely the scenario that Genericity Assumption \ref{assump:spectralGap} rules out.

We will argue that Genericity Assumption \ref{assump:spectralGap} can be guaranteed by replacing (if necessary) $B$ with $B-t$ for all but finitely many $t \geq 0$. Note that this does not change the eigenspaces of $A+B$ and only affects the eigenvalues by shifting them all downward by $t$.
We first provide a characterization of fixed points of $\mf{F}$.

\begin{lemma}[Characterization of fixed points]\label{lem:fixedPt}
Suppose that $P_f$ is a fixed point of $\mf{F}$. Then we can write
\begin{equation}\label{eqn:fixedPtDecomp}
A+\wt{B}[P_f] = \sum_{i=1}^N \mu_i z_i z_i ^*,
\end{equation}
where $z_1,\ldots,z_N$ are orthonormal eigenvectors of $A+\wt{B}[P_f]$
with corresponding eigenvalues $\mu_1 \leq \cdots \leq \mu_N$. Moreover
$z_1,\ldots,z_n$ are eigenvectors of $A+B$ forming an orthonormal basis
of $\mathrm{Im}(P_f)$. Consequently $\mu_i = \lambda_{\tau(i)}$, where
$\tau:\{1,\ldots,n\}\rightarrow\{1,\ldots,N\}$ is increasing, and $P_f =
P$ if and only if $\mu_i = \lambda_i$ for $i=1,\ldots,n$. Otherwise
$\mu_n \geq \lambda_n + \lambda_g$.
\end{lemma}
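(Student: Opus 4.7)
The strategy combines the fixed-point condition $P_f = \mf{F}(P_f)$ with the consistency property from Theorem~\ref{thm:main1} that $\wt{B}[P_f]\vert_{\mathrm{Im}(P_f)} = B\vert_{\mathrm{Im}(P_f)}$. Together these will force any orthonormal basis of $\mathrm{Im}(P_f)$ drawn from a diagonalization of $A+\wt{B}[P_f]$ to consist of genuine eigenvectors of $A+B$, after which the constraints on $\tau$ fall out of the distinct-eigenvalue assumption and a short combinatorial argument.

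Step 1 is to diagonalize $A+\wt{B}[P_f]=\sum_{i=1}^N \mu_i z_i z_i^*$ with $\mu_1\le\cdots\le\mu_N$ and choose the $z_i$ so that $z_1,\ldots,z_n$ is an orthonormal basis of $\mathrm{Im}(P_f)$. This is immediate from Definition~\ref{def:IterOp} whenever $A+\wt{B}[P_f]$ has a positive spectral gap at $\mu_n$; if not, one uses the freedom in the arbitrary choice baked into $\mf{F}$ inside the degenerate eigenspace at $\mu_n$ to produce such a basis (the fact that $\mf{F}(P_f)=P_f$ has rank $n$ means this is always possible). Step 2 applies consistency: for $i\le n$, $z_i\in\mathrm{Im}(P_f)$ gives $(A+B)z_i=(A+\wt{B}[P_f])z_i=\mu_i z_i$, so $z_i$ is an eigenvector of $A+B$ with eigenvalue $\mu_i$.

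Step 3 invokes Genericity Assumption~\ref{assump:distinctEigen}. Each eigenspace of $A+B$ is one-dimensional, so there is a function $\tau:\{1,\ldots,n\}\to\{1,\ldots,N\}$ with $z_i\in\mathrm{span}(v_{\tau(i)})$ and $\mu_i=\lambda_{\tau(i)}$. Orthogonality of the $z_i$ together with the one-dimensionality of each $v_j$-eigenspace makes $\tau$ injective; the ordering $\mu_1\le\cdots\le\mu_n$ and the strict ordering of the $\lambda_j$ then upgrade this to strict monotonicity of $\tau$. Step 4 concludes: $P_f=\sum_{i=1}^n v_{\tau(i)} v_{\tau(i)}^*$, so $P_f=P$ iff $\mathrm{Im}(\tau)=\{1,\ldots,n\}$, which for a strictly increasing $\tau$ is equivalent to $\tau=\mathrm{id}$, i.e.\ $\mu_i=\lambda_i$ for $i=1,\ldots,n$. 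Otherwise, letting $i_0$ be the least index with $\tau(i_0)>i_0$, strict monotonicity gives $\tau(i_0+k)\ge i_0+k+1$ inductively for $k\ge 0$, whence $\tau(n)\ge n+1$ and $\mu_n=\lambda_{\tau(n)}\ge\lambda_{n+1}=\lambda_n+\lambda_g$.

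The only place that is not bookkeeping is Step~1, and even there the difficulty is merely terminological: one must confirm that when $A+\wt{B}[P_f]$ has no spectral gap at $\mu_n$, the image $\mathrm{Im}(P_f)$ is still spanned by eigenvectors $z_1,\ldots,z_n$ corresponding to the $n$ smallest eigenvalues (counted with multiplicity). This is a direct consequence of unwinding Definition~\ref{def:IterOp}. Once this is in hand, the identification of the $z_i$ as eigenvectors of $A+B$ in Step~2 is the substantive input, and the remaining claims are combinatorial.
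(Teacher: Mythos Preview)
Your proof is correct and follows essentially the same route as the paper: use the fixed-point condition together with Definition~\ref{def:IterOp} to see that $\mathrm{Im}(P_f)$ is spanned by bottom-$n$ eigenvectors of $A+\wt{B}[P_f]$, then invoke consistency to make these eigenvectors of $A+B$, and finish with the index combinatorics. The one minor difference is that you read off $\mu_n\le\mu_{n+1}$ directly from the definition of $\mf{F}$, whereas the paper establishes it by a short variational contradiction argument; your way is slightly more economical, and your explicit invocation of Genericity Assumption~\ref{assump:distinctEigen} makes transparent what the paper leaves implicit in the claim that $\tau$ is increasing.
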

\begin{proof}
Let $P_f$ be a fixed point of $\mf{F}$. Referring to Definition \ref{def:IterOp}, we see that then $A+\wt{B}[P_f]$ maps $\mathrm{Im}(P_f)$ into itself. But $A+\wt{B}[Q] \equiv A+B$ on $\mathrm{Im}(P_f)$, so $A+B$ maps $\mathrm{Im}(P_f)$ into itself. $A+B$ can then be considered (via restriction) as a self-adjoint operator $\mathrm{Im}(P_f) \rightarrow \mathrm{Im}(P_f)$, so $\mathrm{Im}(P_f)$ has an orthonormal basis of eigenvectors $z_1,\ldots,z_n$ of $A+B$ with corresponding eigenvalues $\mu_1 \leq  \ldots \leq \mu_n$.

Since $A+\wt{B}[P_f]$ is self-adjoint, we also have that $A+\wt{B}[P_f]$ maps $\mathrm{Im}(P_f) ^\perp$ into itself, so $\mathrm{Im}(P_f) ^\perp$ has an orthonormal basis $z_{n+1},\ldots,z_N$ of eigenvectors of $A+\wt{B}[P_f]$ with corresponding eigenvalues $\mu_{n+1} \leq  \ldots \leq \mu_N$. The decomposition of \eqref{eqn:fixedPtDecomp} follows, provided we can show that $\mu_{n} \leq \mu_{n+1}$.

We will establish this now. First observe the general fact that for any density matrix $Q$, if $u\in \mathrm{Im}(\mf{F}(Q))$ is a unit vector and 
\[
u^* \big(A+\wt{B}[Q]\big) u > \hat{u}^* \big(A+\wt{B}[Q]\big) \hat{u}
\]
for some unit vector $\hat{u}$, then $\hat{u}\in \mathrm{Im}(\mf{F}(Q))$ as well. Now suppose for contradiction that $\mu_{n} > \mu_{n+1}$. Then considering $z_{n}$, $z_{n+1}$, and $P_f$ in the places of $u$, $\hat{u}$, and $Q$, respectively, we conclude that $z_{n+1} \in \mathrm{Im}(\mf{F}(P_f))$. But since $P_f$ is a fixed point of $\mf{F}$, this means that $z_{n+1} \in \mathrm{Im}(P_f)$, which is impossible since $0\neq z_{n+1} \in \mathrm{Im}(P_f)^\perp$.

Now if $\mu_i \neq \lambda_i$ for some $i\in\{1,\ldots,n\}$, we must have $\mu_n = \lambda_m$ for some $m>n$, so $\mu_n \geq \lambda_{n+1} = \lambda_n + \lambda_g$. In this case, we cannot have $P_f = P$, for if this were true then $\mathrm{Im}(P)$ would contain an eigenvector of $A+B$ with eigenvalue greater than $\lambda_n$.

Lastly, suppose that $\mu_i = \lambda_i$ for $i=1,\ldots,n$. Then $\left.(A+B)\right\vert_{\mathrm{Im}(P_f)} \preceq \lambda_n$. Since $A+B$ has a spectral gap, we must have that $\mathrm{Im}(P_f) = \mathrm{Im}(P)$, i.e., $P_f = P$.
\end{proof}

Recall that we would like to establish that Genericity Assumption \ref{assump:spectralGap} holds generically by replacing $B$ with $B-t$.

By Genericity Assumption \ref{assump:distinctEigen}, $A+B$ has only finitely many distinct eigenvectors (up to scaling). Then by Lemma \ref{lem:fixedPt}, $\mf{F}$ can only have finitely many fixed points. More precisely, this is the case because by Lemma \ref{lem:fixedPt} the candidates for fixed points are limited to projectors of the form $P_{\tau}=\sum_{i=1}^n v_{\tau(i)} v_{\tau(i)}^*$, where $\tau:\{1,\ldots,n\}\rightarrow\{1,\ldots,N\}$ is increasing.

For such $\tau$, note that $S_\tau = \mathrm{span}\,\{v_{\tau(1)},\ldots,v_{\tau(n)}\}$ is an invariant subspace for $A+\wt{B}[P_{\tau}]$, and hence so is $S_\tau ^\perp$. Let
\[
\mu_\tau := \lambda_{\min} \left\{ \big(A+\wt{B}[P_{\tau}]\big)\big\vert_{S_\tau ^\perp} \right\}.
\]
If $\mu_\tau < \lambda_{\tau(n)}$, then by Lemma \ref{lem:fixedPt}, $P_\tau$ is not a fixed point. If $\mu_\tau > \lambda_{\tau(n)}$, then evidently $P_\tau$ is a fixed point. If $\mu_\tau > \lambda_{\tau(n)}$, then $P_\tau$ may or may not be a fixed point, since the spectral gap of $A+\wt{B}[P_{\tau}]$ is zero and the choice of $\mf{F}(P_\tau)$ is not canonical. This last event is precisely what we would like to rule out.

More precisely, we would like to guarantee that for all of the (finitely many) increasing functions $\tau:\{1,\ldots,n\}\rightarrow\{1,\ldots,N\}$, we have that $\mu_\tau \neq \lambda_{\tau(n)}$.

Define $B_t := B - t$ for $t\geq 0$, and consider replacing $B$ with
$B_t$ in the eigenvalue problem \eqref{eqn:lineig}. Accordingly, define
$\lambda_i (t)$ and $\mu_\tau(t)$ now as functions of $t \geq 0$.
Evidently $\lambda_i(t) = \lambda_i - t$.

We would like to get a handle on $\mu_{\tau}(t)$. Extend $\tau$ to a permutation on all of $\{1,\ldots,N\}$ (so $v_{\tau(n+1)},\ldots,v_{\tau(N)}$ forms a basis for $S_\tau ^\perp$), and recall from \eqref{eqn:tildeBblock} that we
can write 
\begin{equation*}
\left[\wt{B_t}[P_\tau ]\right]_{V_\tau} = \left(\begin{array}{cc}
B_{11}-t & B_{12}\\
B_{12}^{*} & B_{12}^{*}(B_{11}-t)^{-1}B_{12}
\end{array}\right),
\end{equation*}
for suitable blocks $B_{ij}$, where $V_\tau := [v_{\tau(1)},\ldots,v_{\tau(N)}]$. Since $B_{11}$ is negative definite, we have that \[ B_{12}^{*}(B_{11}-t)^{-1}B_{12} \succeq B_{12}^{*} B_{11}^{-1}B_{12} \] for all $t\geq 0$. It follows that $\mu_{\tau}(t) \geq \mu_{\tau}$.

Thus for every $\tau$, $f_\tau(t) := \mu_\tau (t) - \lambda_{\tau(n)}(t)$ is a strictly increasing function on $t\geq 0$, so $f_\tau$ can have at most one zero. Since there are only finitely many $\tau$ of interest, there can only be finitely many points at which $\mu_\tau (t) = \lambda_{\tau(n)}(t)$ for some $\tau$. This means that by replacing $B$ with $B-t$ for any $t\geq 0$ outside of a finite set, Genericity Assumption \ref{assump:spectralGap} holds.

\begin{remark}\label{rmk:assumptionSummary}
In summary, Genericity Assumptions \ref{assump:distinctEigen} and \ref{assump:spectralGap} can be made to hold by perturbing $A+B$ to have distinct eigenvalues, then in turn replacing $B$ with $B-t$ for any $t\geq 0$ outside of a finite set (the latter step yielding an equivalent eigenproblem). We have shown in particular that these assumptions imply that $\mf{F}$ has only finitely many fixed points and that, for any fixed point $P_f$ of $\mf{F}$, $A+\wt{B}[P_f]$ has a spectral gap. We keep these assumptions for the remainder of Section \ref{sec:global}.
\end{remark}

In particular---recalling that $\mathbf{H}_N$ and $\mathbf{S}_N$ denote the sets of $N\times N$ Hermitian and $N\times N$ real-symmetric matrices, respectively---we have the following:
\begin{lemma}\label{lem:genAssump}
Fix any $A\in\mathbf{H}_N$. Then Genericity Assumptions \ref{assump:distinctEigen} and \ref{assump:spectralGap} hold both (1) for all $B\in \mathbf{H}_N$ outside of a set of zero measure with respect to the Lebesgue measure on $\mathbf{H}_N$ and (2) for all $B\in \mathbf{S}_N$ outside of a set of zero measure with respect to the Lebesgue measure on $\mathbf{S}_N$.
\end{lemma}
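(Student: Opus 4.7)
The plan is to handle each of the two genericity assumptions separately; since the argument for $\mathbf{S}_N$ is the same as that for $\mathbf{H}_N$ (with real-symmetric parameters replacing Hermitian ones), I focus on the Hermitian case. For Genericity Assumption \ref{assump:distinctEigen}, the discriminant $\Delta(B)$ of the characteristic polynomial of $A+B$ is a polynomial in the real coordinates on $\mathbf{H}_N$, and it vanishes exactly where $A+B$ has a repeated eigenvalue. Since one can exhibit $B$ making $A+B$ diagonal with distinct entries, $\Delta \not\equiv 0$, so $\{\Delta = 0\}$ is a proper real-algebraic subvariety of Lebesgue measure zero.

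For Genericity Assumption \ref{assump:spectralGap}, I would work on the open full-measure set $\mathcal{G}_1 := \{\Delta \neq 0\}$, where the spectral projector $P_\tau(B) = \sum_{i=1}^n v_{\tau(i)}(B)v_{\tau(i)}(B)^*$ and the eigenvalues $\lambda_i(B)$ depend real-analytically on $B$ (via the usual contour-integral representation), and hence so does $\mu_\tau(B) := \lambda_{\min}\{(A+\wt{B}[P_\tau(B)])|_{S_\tau(B)^\perp}\}$ away from a further measure-zero subset. Since there are only finitely many increasing $\tau:\{1,\ldots,n\}\to\{1,\ldots,N\}$, a union bound reduces matters to showing that for each fixed $\tau$,
\[
\mathcal{E}_\tau := \{B \in \mathcal{G}_1 \,:\, \mu_\tau(B) = \lambda_{\tau(n)}(B)\}
\]
has Lebesgue measure zero. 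I do this via Fubini along the identity direction: decompose $\mathbf{H}_N = \mathcal{H}_0 \oplus \mathbb{R}\cdot I$, where $\mathcal{H}_0$ is the traceless hyperplane, and parametrize $B = B_0 - tI$. Since $A+B_0-tI$ and $A+B_0$ share the same eigenvectors (with all eigenvalues shifted by $-t$), both $P_\tau$ and $S_\tau$ are constant along each such line, and the block-matrix calculation in the discussion preceding Remark \ref{rmk:assumptionSummary} gives
\[
f_{\tau, B_0}(t) := \mu_\tau(B_0 - tI) - \lambda_{\tau(n)}(B_0 - tI) = \lambda_{\min}\bigl\{A_{22} + B_{12}^*(B_{11} - tI)^{-1} B_{12}\bigr\} - \lambda_{\tau(n)}(B_0) + t,
\]
where $A_{22}, B_{11}, B_{12}$ are the fixed blocks of $A$ and $B_0$ in the eigenbasis of $A+B_0$. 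This function is real-analytic in $t$ on the complement of a finite set (eigenvalues of $B_{11}$, together with eigenvalue-crossing values of the matrix argument); by the real-analytic identity theorem, showing that $f_{\tau,B_0}$ is not identically zero on any connected component of that complement implies its zero set in $\mathbb{R}$ is discrete, so that Fubini yields $|\mathcal{E}_\tau| = 0$, and a union bound over $\tau$ completes the proof.

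The main obstacle is this non-identically-zero check, which I establish by examining the boundary behavior of $f_{\tau, B_0}$. As $t \to +\infty$ the $\lambda_{\min}$ summand tends to $\lambda_{\min}(A_{22})$, so $f_{\tau, B_0}(t)\to +\infty$; likewise $f_{\tau, B_0}(t) \to -\infty$ as $t \to -\infty$. At an interior endpoint $t^*$ that is an eigenvalue of $B_{11}$ with corresponding unit eigenvector $w$ satisfying $B_{12}^* w \neq 0$ (a condition that holds outside a further measure-zero set of $B_0$'s, by direct linear algebra), the spectral decomposition $B_{12}^*(B_{11}-tI)^{-1}B_{12} = \sum_j (t_j - t)^{-1}(B_{12}^*w_j)(B_{12}^*w_j)^*$ shows that the $j^*$-th term dominates as $t \to t^*$ and contributes an eigenvalue $\sim (t^*-t)^{-1}\|B_{12}^*w\|^2$ that blows up to $-\infty$ as $t\to t^{*+}$, forcing $\lambda_{\min}\{\cdots\}\to -\infty$ and hence $f_{\tau,B_0}\to -\infty$ at the left endpoint of each bounded interior component. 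This rules out $f_{\tau,B_0}\equiv 0$ on any component and secures Fubini's conclusion. The $\mathbf{S}_N$ case is verbatim.
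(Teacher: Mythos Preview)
Your overall strategy for both assumptions coincides with the paper's: the discriminant argument for Genericity Assumption~\ref{assump:distinctEigen}, and Fubini along the scalar direction $B = B_0 - tI$ for Genericity Assumption~\ref{assump:spectralGap}. The key difference is in how you control the fibrewise function $f_{\tau,B_0}(t)$.

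The paper's argument on each scalar line is much shorter than yours: it simply observes that $f_\tau$ is \emph{strictly increasing} in $t$ on the relevant half-line. Indeed, with $B_{11}-t \prec 0$, operator monotonicity of the inverse gives $t \mapsto B_{12}^*(B_{11}-t)^{-1}B_{12}$ nondecreasing in the Loewner order, so $\mu_\tau$ is nondecreasing, while $\lambda_{\tau(n)}(t) = \lambda_{\tau(n)} - t$ is strictly decreasing. Hence $f_\tau$ is strictly increasing and has at most one zero per line; Fubini finishes immediately. This sidesteps all analyticity and boundary considerations.

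Your analyticity route is workable but, as written, has a gap. You partition each line by the poles of $(B_{11}-t)^{-1}$ \emph{and} the eigenvalue crossings of $M(t) = A_{22} + B_{12}^*(B_{11}-t)^{-1}B_{12}$, then invoke the identity theorem on each resulting subinterval. Your blow-up argument shows $f_{\tau,B_0} \to -\infty$ only when approaching a \emph{pole} from the right; it says nothing about subintervals whose left endpoint is merely an eigenvalue crossing, where $f_{\tau,B_0}$ is continuous and finite. So ``not identically zero on every component'' is not established for those subintervals. The gap can be closed: if $f_{\tau,B_0} \equiv 0$ on some subinterval, then one Rellich eigenvalue branch $\mu_k(t)$ satisfies $\mu_k(t) = \lambda_{\tau(n)}(B_0) - t$ identically on the entire pole-free component, but as $t \to +\infty$ the left side tends to an eigenvalue of $A_{22}$ while the right side tends to $-\infty$, a contradiction. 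With that addition your proof is complete, though the monotonicity observation remains the cleaner path.
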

\begin{remark}
Note that statement (1) does not imply statement (2). It is desirable to have both of these statements at our disposal for the following reason. If we are solving an eigenvalue problem where $B$ is real-symmetric, we would like to be able to guarantee that a small random \textnormal{real-symmetric} perturbation of $B$ will satisfy the Genericity Assumptions. With only the first statement, we could only guarantee that this would work for a random \textnormal{Hermitian} perturbation, which would almost surely introduce imaginary parts to all the entries of $B$. This would not be desirable from a computational perspective.
\end{remark}
\begin{proof}
To see that the statements (1) and (2) hold for Genericity Assumption
\ref{assump:distinctEigen} alone, refer to Section 1.3 of \cite{TaoRMT}.
Now the set $\mathrm{Sc}:=\{t\cdot I_N \,: \, t\in\mathbb{R}\}$ of
scalar matrices is a one-dimensional subspace of the both of the real
vector spaces $\mathbf{H}_N$ and $\mathbf{S}_N$. We have already argued
in the preceding discussion that for any $X\in \mathbf{H}_N$ (hence also
for any $X \in \mathbf{S}_N$), Genericity Assumption
\ref{assump:spectralGap} holds for a.e. choice of $B$ in the
one-dimensional space $X+\mathrm{Sc}$ (with respect to the
one-dimensional Lebesgue measure). By Fubini's theorem (considering the product decompositions $\mathbf{H}_N = \mathrm{Sc} + \mathrm{Sc}^\perp$ and $\mathbf{S}_N = \mathrm{Sc} + \mathrm{Sc}^\perp$, where the orthogonal complements are taken within $\mathbf{H}_N$ and $\mathbf{S}_N$, respectively), Genericity Assumption \ref{assump:spectralGap} holds for a.e. choice of $B$ in $\mathbf{H}_N$ with respect to the Lebesgue measure on $\mathbf{H}_N$ and a.e. choice of $B$ in $\mathbf{S}_N$ with respect to the Lebesgue measure on $\mathbf{S}_N$.
\end{proof}

\begin{corollary}
Genericity Assumptions \ref{assump:distinctEigen} and \ref{assump:spectralGap} hold for almost every pair $(A,B)$ in $\mathbf{H}_N \times \mathbf{H}_N$ (with respect to the Lebesgue measure on $\mathbf{H}_N \times \mathbf{H}_N$) and for almost every pair $(A,B)$ in $\mathbf{S}_N \times \mathbf{S}_N$ (with respect to the Lebesgue measure on $\mathbf{S}_N \times \mathbf{S}_N$).
\end{corollary}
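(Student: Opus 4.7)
The plan is to deduce the corollary from Lemma \ref{lem:genAssump} by a direct application of Fubini's theorem. Let $E_{\mathrm{H}} \subset \mathbf{H}_N \times \mathbf{H}_N$ denote the set of pairs $(A,B)$ for which Genericity Assumption \ref{assump:distinctEigen} or Genericity Assumption \ref{assump:spectralGap} fails, and let $E_{\mathrm{S}} \subset \mathbf{S}_N \times \mathbf{S}_N$ be defined analogously. The goal is to show that $E_{\mathrm{H}}$ and $E_{\mathrm{S}}$ have measure zero with respect to the Lebesgue measures on $\mathbf{H}_N \times \mathbf{H}_N$ and $\mathbf{S}_N \times \mathbf{S}_N$, respectively.

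First I would verify measurability of the bad sets. The failure of Genericity Assumption \ref{assump:distinctEigen} corresponds to the vanishing of the discriminant of the characteristic polynomial of $A+B$, which is a polynomial in the entries of $A$ and $B$; hence this bad set is Zariski closed, in particular Borel measurable. The failure of Genericity Assumption \ref{assump:spectralGap} can be written (using Lemma \ref{lem:fixedPt} and the explicit block formula \eqref{eqn:tildeBblock}) as a finite union over increasing maps $\tau$ of zero sets of real-analytic functions of $(A,B)$ on the open set where $V^* B V$ is invertible, together with the lower-dimensional complement. In either case $E_{\mathrm{H}}$ and $E_{\mathrm{S}}$ are Borel, so Fubini applies.

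Next, for each fixed $A \in \mathbf{H}_N$, Lemma \ref{lem:genAssump}(1) tells us that the slice
\[
(E_{\mathrm{H}})_A := \{B \in \mathbf{H}_N \,:\, (A,B) \in E_{\mathrm{H}}\}
\]
has Lebesgue measure zero in $\mathbf{H}_N$. By Fubini's theorem,
\[
\mathrm{Leb}_{\mathbf{H}_N \times \mathbf{H}_N}(E_{\mathrm{H}}) = \int_{\mathbf{H}_N} \mathrm{Leb}_{\mathbf{H}_N}\big((E_{\mathrm{H}})_A\big) \, \ud A = 0,
\]
as desired. The identical argument, using Lemma \ref{lem:genAssump}(2), shows $\mathrm{Leb}_{\mathbf{S}_N \times \mathbf{S}_N}(E_{\mathrm{S}}) = 0$.

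The only step requiring any care is the measurability check, since Fubini's theorem is otherwise mechanical here; but because the failure events are defined by algebraic identities (vanishing discriminants and eigenvalue coincidences) in the matrix entries, measurability poses no real difficulty. The corollary follows immediately.
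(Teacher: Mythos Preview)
Your proposal is correct and follows exactly the same approach as the paper: the paper's proof is a one-line appeal to Lemma \ref{lem:genAssump} together with Fubini's theorem. Your additional care about measurability of the bad sets is a welcome elaboration that the paper omits, but the argument is otherwise identical.
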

\begin{proof}
This follows from Lemma \ref{lem:genAssump} and Fubini's theorem.
\end{proof}

\subsection{Global convergence to a fixed point, local convergence revisited}
\label{subsec:globalConvergenceToFixed}
We are now ready to prove that the adaptive compression method converges globally to a fixed point (though we do not yet address whether the fixed point is the true density matrix $P$).

\begin{proposition}\label{prop:globalConvToFixed}
$P^{(k)} \rightarrow P_f$ as $k\rightarrow \infty$ for some fixed point $P_f$ of $\mf{F}$.
\end{proposition}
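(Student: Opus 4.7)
The plan is to combine the ingredients already developed: first show that $\norm{P^{(k+1)}-P^{(k)}}_2 \to 0$, so that for large $k$ each $P^{(k)}$ must lie near some projector in a finite set of candidates; then use mutual isolation of these candidates to conclude that $P^{(k)}$ converges to a single one; and finally invoke the genericity assumptions to show that the limit is in fact a fixed point.

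By Corollary~\ref{cor:eigenConv} each $\lambda_i^{(k)}$ converges as $k\to\infty$, so the telescoping quantity $\delta^{(k)} \to 0$, and Lemma~\ref{lem:densMatChange} yields $\norm{P^{(k+1)}-P^{(k)}}_2 \to 0$. Equivalently $\norm{\mf{F}(P^{(k)}) - P^{(k)}}_2 \to 0$, so Lemma~\ref{lem:almostEigen} furnishes, for each sufficiently large $k$, an increasing map $\tau_k:\{1,\ldots,n\}\to\{1,\ldots,N\}$ with $\norm{P^{(k)} - P_{\tau_k}}_2 \to 0$, where $P_\tau := \sum_{i=1}^n v_{\tau(i)} v_{\tau(i)}^*$. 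By Genericity Assumption~\ref{assump:distinctEigen} there are only finitely many such $P_\tau$, so $\delta := \min_{\tau\ne\tau'}\norm{P_\tau-P_{\tau'}}_2 > 0$. For $k$ large, both $\norm{P^{(k+1)} - P^{(k)}}_2$ and $\norm{P^{(k)}-P_{\tau_k}}_2$ are less than $\delta/3$, so the triangle inequality forces $\tau_{k+1} = \tau_k$. Hence $\tau_k$ is eventually constant, equal to some $\tau$, and $P^{(k)}\to P_\tau$.

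It remains to show that $P_\tau$ is a fixed point of $\mf{F}$. Write $S_\tau := \mathrm{Im}(P_\tau)$ and $\mu_\tau := \lambda_{\min}\{(A+\wt{B}[P_\tau])|_{S_\tau^\perp}\}$. Genericity Assumption~\ref{assump:spectralGap} rules out $\mu_\tau = \lambda_{\tau(n)}$. If $\mu_\tau > \lambda_{\tau(n)}$, then $A+\wt{B}[P_\tau]$ has a positive spectral gap with $S_\tau$ as its bottom-$n$ eigenspace, so $P_\tau$ is a canonically defined fixed point. To exclude the case $\mu_\tau < \lambda_{\tau(n)}$, set $\mu := \lim_k \lambda_n^{(k+1)}$, which exists by Corollary~\ref{cor:eigenConv} and equals $\lambda_n\{A+\wt{B}[P_\tau]\}$ by Lipschitz continuity of eigenvalues. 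A compactness argument, applied to any orthonormal basis of $\mathrm{Im}(P^{(k+1)})$ consisting of eigenvectors of $A+\wt{B}[P^{(k)}]$ with eigenvalues $\le\lambda_n^{(k+1)}$, shows that every limit point of such a basis is an eigenvector of $A+\wt{B}[P_\tau]$ with eigenvalue $\le\mu$; hence $S_\tau$ lies in the span of eigenvectors of $A+\wt{B}[P_\tau]$ with eigenvalues $\le\mu$, so $v_{\tau(n)}\in S_\tau$ satisfies $\lambda_{\tau(n)}\le\mu$. On the other hand, if $\mu_\tau < \lambda_{\tau(n)}$, then $\mu_\tau$ is among the bottom $n$ eigenvalues of $A+\wt{B}[P_\tau]$ and displaces $\lambda_{\tau(n)}$ from that set, so $\mu < \lambda_{\tau(n)}$, a contradiction.

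The main obstacle is this last step: Genericity Assumption~\ref{assump:spectralGap} does not by itself guarantee a spectral gap of $A+\wt{B}[P_\tau]$ at position $n$ in the bad case, so one must use subspace-convergence of the iterates rather than simple continuity of $\mf{F}$ at $P_\tau$ to derive the contradiction. Once this is in hand, the three steps above assemble into the proof.
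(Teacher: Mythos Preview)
Your argument is correct. The first two steps---showing $\norm{P^{(k+1)}-P^{(k)}}_2\to0$, concluding proximity to the finite set $\{P_\tau\}$ via Lemma~\ref{lem:almostEigen}, and deducing convergence to a single $P_\tau$ by mutual isolation---match the paper's proof essentially verbatim.

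The third step, showing that the limit $P_\tau$ is a fixed point, differs. The paper observes that for every $k$ the defining property of $P^{(k)}=\mf{F}(P^{(k-1)})$ gives
\[
\lambda_{\max}\Big\{\big(A+\wt{B}[P^{(k-1)}]\big)\big|_{\mathrm{Im}(P^{(k)})}\Big\}\ \le\ \lambda_{\min}\Big\{\big(A+\wt{B}[P^{(k-1)}]\big)\big|_{\mathrm{Im}(P^{(k)})^\perp}\Big\},
\]
and then passes this inequality to the limit by rewriting each side as an extremal eigenvalue of a globally defined Hermitian matrix (shifting by $\pm C$ on the complementary subspace so that the restricted extremum becomes a global one). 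This yields $\lambda_{\tau(n)}\le\mu_\tau$ directly, and Genericity Assumption~\ref{assump:spectralGap} then forces strict inequality. Your route instead extracts, via compactness, a convergent orthonormal eigenbasis of $\mathrm{Im}(P^{(k+1)})$ and uses continuity of $\wt{B}$ to identify the limit vectors as eigenvectors of $A+\wt{B}[P_\tau]$ with eigenvalues at most $\mu=\lambda_n\{A+\wt{B}[P_\tau]\}$, whence $\lambda_{\tau(n)}\le\mu$; the contradiction with $\mu_\tau<\lambda_{\tau(n)}$ follows by counting. Both arguments are valid. The paper's trick avoids subsequences and is a bit cleaner; yours is more hands-on but makes the role of the converging eigenspaces explicit, which is exactly the obstacle you correctly identified.
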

\begin{proof}
As above let $v_1,\ldots,v_N$ be an orthonormal basis of eigenvectors of $A+B$ with corresponding eigenvalues $\lambda_1 \leq \cdots \leq \lambda_N$. Let $\mathcal{T}$ be the set of all density  matrices $P_\tau := \sum_{i=1}^n v_{\tau(i)} v_{\tau(i)}^*$ where $\tau:\{1,\ldots,n\}\rightarrow\{1,\ldots,N\}$ is increasing. Then Lemma \ref{lem:densMatChange} and Lemma \ref{lem:almostEigen} together imply that $\mathrm{dist}\left( P^{(k)}, \mathcal{T} \right) \rightarrow 0$ as $k \rightarrow \infty$.
However, since (by Lemma \ref{lem:densMatChange}) $\Vert P^{(k)} -
P^{(k-1)} \Vert \rightarrow 0$, and since $\mathcal{T}$ consists of only
finitely many (hence mutually isolated) points, it must be the case that
$P^{(k)} \rightarrow P_\tau$ for some $\tau$.  Below we show that
$P_{\tau}$ must also be a fixed point of $\mf{F}$.

Observe that, for all $k$,
\begin{equation}\label{eqn:boringInequality}
\lambda_{\max} \left\{ \big(A+\wt{B}[P^{(k-1)}]\big)\big\vert_{\mathrm{Im}(P^{(k)})} \right\} \leq \lambda_{\min} \left\{ \big(A+\wt{B}[P^{(k-1)}]\big)\big\vert_{\mathrm{Im}(P^{(k)})^\perp} \right\}.
\end{equation}
We will rewrite this inequality in a way that makes it clear that we can take a limit as $k\rightarrow \infty$. To this end, let $C \gg \Vert A \Vert_2 + \Vert B \Vert_2$, noting that $\Vert A \Vert_2 + \Vert B \Vert_2$ provides an upper bound on $\Vert A + \wt{B}[Q] \Vert_2$ for all density matrices $Q$, hence also an upper bound on the absolute value of the eigenvalues of $A + \wt{B}[Q]$. Then \eqref{eqn:boringInequality} is the same as
\begin{eqnarray*}
& & \lambda_{\max} \left\{ P^{(k)}\big(A+\wt{B}[P^{(k-1)}]\big) P^{(k)} - C\cdot\big(I-P^{(k)}\big) \right\} \\ 
& & \qquad \qquad \leq \lambda_{\min} \left\{ \big(I-P^{(k)}\big)\big(A+\wt{B}[P^{(k-1)}]\big)\big(I-P^{(k)}\big) + C\cdot P^{(k)} \right\}.
\end{eqnarray*}
Then by continuity and the convergence $P^{(k)} \rightarrow P_\tau$ we have 
\begin{eqnarray*}
& & \lambda_{\max} \left\{ P_\tau\big(A+\wt{B}[P_\tau]\big) P_\tau - C\cdot\big(I-P_\tau\big) \right\} \\ 
& & \qquad \qquad \leq \lambda_{\min} \left\{ \big(I-P_\tau\big)\big(A+\wt{B}[P_\tau]\big)\big(I-P_\tau\big) + C\cdot P_\tau \right\},
\end{eqnarray*}
i.e.,
\[
\lambda_{\tau(n)} = \lambda_{\max} \left\{ \big(A+\wt{B}[P_\tau]\big)\big\vert_{\mathrm{Im}(P_\tau)} \right\} \leq \lambda_{\min} \left\{ \big(A+\wt{B}[P_\tau]\big)\big\vert_{\mathrm{Im}(P_\tau)^\perp} \right\}.
\]
We have successfully passed \eqref{eqn:boringInequality} to the limit as $k\rightarrow \infty$.

If $P_\tau$ is not a fixed point, then
Genericity Assumption \ref{assump:spectralGap} implies that 
\[
\lambda_{\tau(n)} > \lambda_{\min} \left\{ \big(A+\wt{B}[P_{\tau}]\big)\big\vert_{\mathrm{Im}(P_\tau) ^\perp} \right\},
\]
yielding a contradiction.
\end{proof}

Next we see how the preceding results imply local convergence. Though we
have already provided a more refined local convergence result (complete
with a linear rate of convergence), it is noteworthy that local
convergence can be proved ``non-perturbatively''. 
For this proof, we will \textit{not} consider a linearization of $\mf{F}$ about $P$, instead relying only on eigenvalue monotonicity as the tool.

\begin{proposition}[Local convergence via eigenvalue monotonicity]\label{prop:localConvNonPerturb}
If $P^{(0)}$ is sufficiently close (in the sense of any given norm on $\mathbb{C}^{N\times N}$) to the true density matrix $P$, then $P = \lim_{k\rightarrow\infty} P^{(k)}$.
\end{proposition}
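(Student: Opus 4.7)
The plan is to combine the global convergence-to-a-fixed-point result (Proposition \ref{prop:globalConvToFixed}) with the fixed-point characterization in Lemma \ref{lem:fixedPt} and the eigenvalue monotonicity Lemma \ref{lem:eigenMono}. By Proposition \ref{prop:globalConvToFixed}, we already know that $P^{(k)}\to P_f$ for some fixed point $P_f$ of $\mf{F}$, so the only thing to verify is that $P_f = P$ whenever $P^{(0)}$ is sufficiently close to $P$. Lemma \ref{lem:fixedPt} provides the dichotomy needed to rule out ``bad'' fixed points: either $P_f = P$ and $\mu_n = \lambda_n$, or $P_f \neq P$ and $\mu_n \geq \lambda_n + \lambda_g$, where $\mu_n := \lambda_n\{A+\wt{B}[P_f]\}$.

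First I would observe that $\lambda_n\{A+\wt{B}[P]\} = \lambda_n$ by Lemma \ref{lem:orderzero}. By Remark \ref{rem:compressionContinuity} the map $Q\mapsto \wt{B}[Q]$ is continuous on density matrices, and the $n$-th eigenvalue is a continuous function of a Hermitian matrix, so $Q\mapsto \lambda_n\{A+\wt{B}[Q]\}$ is continuous in a neighborhood of $P$. Hence there is a neighborhood $\mathcal{U}$ of $P$ such that
\[
\lambda_n\{A+\wt{B}[Q]\} < \lambda_n + \lambda_g
\]
for every density matrix $Q\in\mathcal{U}$.

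Next, suppose $P^{(0)} \in \mathcal{U}$. Then $\lambda_n^{(1)} = \lambda_n\{A+\wt{B}[P^{(0)}]\} < \lambda_n + \lambda_g$. The eigenvalue monotonicity Lemma \ref{lem:eigenMono} gives $\lambda_n^{(k+1)} \leq \lambda_n^{(1)}$ for all $k \geq 0$, so
\[
\lambda_n^{(k)} < \lambda_n + \lambda_g \qquad \text{for every } k \geq 1.
\]
Passing to the limit $k\to\infty$ (again using continuity of $\wt{B}$ and of the $n$-th eigenvalue, together with $P^{(k)}\to P_f$), we conclude $\mu_n = \lambda_n\{A+\wt{B}[P_f]\} \leq \lambda_n^{(1)} < \lambda_n + \lambda_g$. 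By the contrapositive of the last sentence of Lemma \ref{lem:fixedPt}, this strict inequality forces $P_f = P$.

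The main obstacle here is essentially trivial once we have Proposition \ref{prop:globalConvToFixed} and Lemma \ref{lem:fixedPt} in hand: the argument is simply that eigenvalue monotonicity ``locks in'' the value of $\lambda_n^{(k)}$ below the threshold $\lambda_n + \lambda_g$ that separates $P$ from all other fixed points. The only subtle point is that we need the open gap around $\lambda_n$ guaranteed by Lemma \ref{lem:fixedPt}; Genericity Assumptions \ref{assump:distinctEigen}–\ref{assump:spectralGap} play no further role in this particular proof beyond what is already used in Proposition \ref{prop:globalConvToFixed}.
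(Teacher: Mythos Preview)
Your proof is correct and follows essentially the same approach as the paper's: both invoke Proposition~\ref{prop:globalConvToFixed} to get convergence to some fixed point $P_f$, use continuity of $Q\mapsto \lambda_n\{A+\wt{B}[Q]\}$ together with eigenvalue monotonicity (Lemma~\ref{lem:eigenMono}) to trap $\lambda_n^{(k)}$ strictly below $\lambda_n+\lambda_g$, and then appeal to the fixed-point characterization (Lemma~\ref{lem:fixedPt}) to conclude $P_f=P$. The only cosmetic differences are that the paper works with all indices $i=1,\ldots,n$ and the threshold $\lambda_i+\lambda_g/2$, whereas you work only with $i=n$ and a strict inequality at $\lambda_n+\lambda_g$; both are valid.
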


\begin{proof}
By the continuity of $\lambda_i \{\,\cdot\,\}$ and $\wt{B}$, for all choices of $P^{(0)}$ sufficiently close to $P$ we have that
\[
\lambda_i \big\{ A+\wt{B}[P^{(0)}] \big\} \leq \lambda_i + \lambda_g /2,
\]
for all $i=1,\ldots,n$. Then for such $P^{(0)}$, eigenvalue monotonicity (Lemma \ref{lem:eigenMono}) implies that
\[
\lambda_i \big\{ A+\wt{B}[P^{(k)}] \big\} \leq \lambda_i + \lambda_g /2,
\]
for all $k$, $i=1,\ldots,n$.

At a fixed point $P_f$, the bottom $n$ eigenvalues of $A+\wt{B}[P_f]$ are eigenvalues of $A+B$. If $P_f \neq P$, then one of these eigenvalues must be at least as large as $\lambda_{n+1} = \lambda_n + \lambda_g$.

But Proposition \ref{prop:globalConvToFixed} says that $P^{(k)} \rightarrow P_f$ for some fixed point $P_f$. By continuity, this is impossible if $P^{(0)}$ is taken as above, unless $P_f = P$.
\end{proof}

\subsection{Linearization around fixed points}
\label{subsec:linearizationFP}
In this section we repeat the first-order analysis of Section \ref{subsec:linearLocal} about an arbitrary fixed point of $\mf{F}$. We will see that all fixed points except the true density matrix are repulsive in a certain sense.

Let $P_f$ be a fixed point of $\mf{F}$. Then we can write $A+B = \sum_{i=1}^N \mu_i u_i u_i^*$, where $u_i$ are orthonormal eigenvectors of $A+B$ with eigenvalues $\mu_i$, and $P_f = \sum_{i=1}^n u_i u_i^*$. The picture is almost exactly the same as in Section \ref{subsec:linearLocal}, with the important difference that $\{\mu_1, \ldots, \mu_n \}$ might \textit{not} be the same as $\{\lambda_1,\ldots,\lambda_n\}$. Though $\mu_1,\ldots,\mu_n$ may not be the bottom eigenvalues of $A+B$, they \textit{are} the bottom eigenvalues of $A+\wt{B}[P_f]$, and in fact our genericity assumptions have guaranteed that $A+\wt{B}[P_f]$ enjoys a spectral gap. Thus $\mf{F}$ is smooth near $P_f$, and the same reasoning that yielded \ref{lem:composedJacobian} also yields the following.

\begin{lemma}\label{lem:composedJacobianGeneral}
 With notation as in the preceding discussion, for $\epsilon>0$ sufficiently small, let $P_f (\epsilon)$ be density matrices with $\Delta P = \lim_{\epsilon\to
  0} (P_f (\epsilon)- P_f)/\epsilon$. Then
  \begin{eqnarray*}
    D\mf{F}_{P_f}[\Delta P] &:=& \lim_{\epsilon\to 0} \frac{\mf{F}(P_f (\epsilon))-\mf{F}(P_f)}{\epsilon} \\
    &=& 
    \sum_{i=1}^n
    \left(P_f^\perp + \left(\wt{B}[P_f]-B\right)^\dagger \left(A+B- \mu_i\right) P_f^\perp \right)^\dagger    
    (\Delta P) u_i u_i^* + \mathrm{h.c.}
  \end{eqnarray*}
\end{lemma}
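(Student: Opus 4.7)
The plan is to mirror the proof of Lemma~\ref{lem:composedJacobian} essentially verbatim, with $P$, $v_i$, and $\lambda_i$ replaced by $P_f$, $u_i$, and $\mu_i$ respectively, and to verify carefully that each ingredient of that earlier proof still applies at an arbitrary fixed point under our Genericity Assumptions.

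First I would establish that the linearization is well-defined. By Lemma~\ref{lem:fixedPt}, at the fixed point $P_f$ we have $A+\wt{B}[P_f] = \sum_{i=1}^N \mu_i u_i u_i^*$ with $\mu_1 \leq \cdots \leq \mu_N$, and Genericity Assumption~\ref{assump:spectralGap} guarantees that $\mu_n < \mu_{n+1}$ (i.e., $A+\wt{B}[P_f]$ has a positive spectral gap). Consequently $\mf{F}$ is canonically defined and smooth on a neighborhood of $P_f$ (by the same continuity argument used after Lemma~\ref{lem:orderzero}), so the limit defining $D\mf{F}_{P_f}$ makes sense.

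Next I would apply Lemma~\ref{lem:contour} with $H = A+\wt{B}[P_f]$ (whose $n$-th spectral gap is positive, as just established) and $\Delta H = D\wt{B}_{P_f}[\Delta P]$, the latter given by Lemma~\ref{lem:fB}. The chain rule yields
\[
D\mf{F}_{P_f}[\Delta P] = \sum_{i=1}^{n}\bigl[P_f^\perp(\mu_i - H)P_f^\perp\bigr]^\dagger \bigl[(B-\wt{B}[P_f])(\Delta P)(P_f B P_f)^\dagger B + \mathrm{h.c.}\bigr] u_i u_i^* + \mathrm{h.c.}
\]
The same two algebraic identities that drove the original proof remain valid: since $\wt{B}[P_f] \equiv B$ on $\mathrm{Im}(P_f)$ and $u_i \in \mathrm{Im}(P_f)$, we have $(B-\wt{B}[P_f])u_i = 0$ (which kills the ``$\mathrm{h.c.}$'' term inside the brackets) and $(P_f B P_f)^\dagger B u_i = u_i$. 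Substituting and rearranging gives
\[
D\mf{F}_{P_f}[\Delta P] = \sum_{i=1}^{n}\bigl[P_f^\perp(\wt{B}[P_f]-B + A+B - \mu_i)P_f^\perp\bigr]^\dagger (\wt{B}[P_f]-B)(\Delta P) u_i u_i^* + \mathrm{h.c.}
\]

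Finally I would absorb the factor of $\wt{B}[P_f]-B$ into the pseudoinverse exactly as in the proof of Lemma~\ref{lem:composedJacobian}. The two required positivity facts transfer directly: the operator $A+\wt{B}[P_f] - \mu_i$ leaves $\mathrm{Im}(P_f)^\perp$ invariant and is positive definite there because $\mu_{n+1} > \mu_n \geq \mu_i$ by the spectral gap from Genericity Assumption~\ref{assump:spectralGap}; and $\wt{B}[P_f] - B$ similarly leaves $\mathrm{Im}(P_f)^\perp$ invariant and is positive definite there by Lemma~\ref{lem:Schurfact} applied to the negative definite $B$ (this is essentially the content of Lemma~\ref{lem:negdef}, strengthened to definiteness on the orthogonal complement). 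Hence the pseudoinverse acts as an honest inverse on the relevant block, and we can move $\wt{B}[P_f]-B$ inside to obtain the claimed formula. The only subtlety worth double-checking is that nowhere in the simplification did we use $P_f = P$ or $\mu_i = \lambda_i$ specifically; inspection of the earlier proof confirms that only the structural properties of a fixed point (invariance, eigenvector consistency, spectral gap) were invoked, all of which hold at $P_f$ under our assumptions.
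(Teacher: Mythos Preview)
Your proposal is correct and follows exactly the approach the paper indicates: the paper does not give an explicit proof of this lemma but simply remarks that ``the same reasoning that yielded [Lemma~\ref{lem:composedJacobian}] also yields'' the result, and your write-up carefully verifies that each ingredient of that earlier argument (spectral gap, the two algebraic identities, and the block positivity facts needed to pull $\wt{B}[P_f]-B$ inside the pseudoinverse) still holds at an arbitrary fixed point under the Genericity Assumptions.
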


\begin{remark}\label{remark:ZmatrixGeneral}
  Write $B$ in the $U_N:= [u_1,\ldots,u_N]$ basis as 
\[
\left[ B \right]_{U_N} =
\left(\begin{array}{cc}
B_{11}^{P_f} & B_{12}^{P_f}\\
\left(B_{11}^{P_f}\right)^* & B_{22}^{P_f}
\end{array}\right).
\]
Then in this basis, the matrix of the linear transformation
\[
Z_i^{P_f} :=  \left(P_f^\perp + \left(\wt{B}[P_f]-B\right)^\dagger \left(A+B- \mu_i\right) P_f^\perp \right)^\dagger
\]
appearing in Lemma \ref{lem:composedJacobianGeneral} is given by
\begin{equation*}
  \left[ Z_i^{P_f} \right]_{U_{N}} =
\left(\begin{array}{cc}
0 & 0\\
0 & J_i^{P_f}
\end{array}\right),
\end{equation*}
where
\begin{equation}\label{eqn:JmatrixInBasis}
J_i^{P_f} := \left[ I_{N-n} + \left( -S_{22}^{P_f} \right)^{-1} (M_2 - \mu_i) \right]^{-1}
\end{equation}
and $S_{22}^{P_f}$ is shorthand for the Schur complement and is negative
definite,
and $M_2 := \mathrm{diag}(\mu_{n+1},\ldots,\mu_N)$. Note that if $P_f \neq P$, then for some $i\in \{1,\ldots,n\}$, $M_2 - \mu_i$ is diagonal with a strictly negative entry.
\end{remark}

We have, in contrast with Lemma \ref{lem:eigenJ}:

\begin{lemma}\label{lem:eigenJGeneral}
For $i=1,\ldots,n$, $J_i^{P_f}$ is diagonalizable with $\sigma\big(J_i^{P_f}\big) \subset (0,\infty)\backslash{\{1\}}$. Moreover, if $P_f \neq P$, then $\lambda_{\max}\big(J_i^{P_f}\big) > 1$ for some $i\in \{1,\ldots,n\}$.
\end{lemma}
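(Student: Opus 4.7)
\begin{proof}[Proof plan for Lemma \ref{lem:eigenJGeneral}]
The plan mirrors the proof of Lemma \ref{lem:eigenJ} but with the additional bookkeeping to accommodate the fact that $M_2 - \mu_i$ need not be positive definite. Throughout, write $T := (-S_{22}^{P_f})^{-1}$ (positive definite, since $\wt{B}[P_f] - B \succeq 0$ has $-S_{22}^{P_f}$ as its lower-right block, and $-S_{22}^{P_f}$ is in fact invertible since $B\prec 0$, as argued in Section \ref{sec:properties}) and $D_i := M_2 - \mu_i$, which is real diagonal but possibly indefinite. Then
\[
J_i^{P_f} = \left[ I_{N-n} + T D_i \right]^{-1}.
\]

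\emph{Step 1: Diagonalizability.} Conjugation by $T^{1/2}$ gives
\[
T^{-1/2} (TD_i) T^{1/2} = T^{1/2} D_i T^{1/2},
\]
which is Hermitian, hence diagonalizable with real eigenvalues. Thus $TD_i$ is diagonalizable with real eigenvalues, and $J_i^{P_f}$ inherits diagonalizability once we verify that $I + TD_i$ is invertible.

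\emph{Step 2: Invertibility and positivity of $\sigma(J_i^{P_f})$.} Observe that $T^{-1} + D_i = -S_{22}^{P_f} + M_2 - \mu_i$ is exactly the matrix of $(A+\wt{B}[P_f]-\mu_i)\big|_{\mathrm{Im}(P_f)^\perp}$ in the orthonormal basis $u_{n+1},\ldots,u_N$, since $[A+B]_{U_N}$ is diagonal with lower-right block $M_2$ and $[\wt{B}[P_f]-B]_{U_N}$ has lower-right block $-S_{22}^{P_f}$. Since $P_f$ is a fixed point, $\mu_i \leq \mu_n \leq \lambda_{\min}\{(A+\wt{B}[P_f])\vert_{\mathrm{Im}(P_f)^\perp}\}$, and Genericity Assumption \ref{assump:spectralGap} upgrades the last inequality to a strict one. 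Therefore $T^{-1} + D_i \succ 0$, and conjugating by $T^{1/2}$ yields $I + T^{1/2} D_i T^{1/2} \succ 0$. By similarity, every eigenvalue of $I + TD_i$ is strictly positive, so $I + TD_i$ is invertible and $\sigma(J_i^{P_f}) \subset (0,\infty)$.

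\emph{Step 3: Excluding the eigenvalue $1$.} The value $1$ lies in $\sigma(J_i^{P_f})$ iff $0\in \sigma(TD_i)$, iff $\det(D_i) = 0$ (since $T$ is invertible), iff $\mu_i = \mu_j$ for some $j \geq n+1$. But by Genericity Assumption \ref{assump:distinctEigen}, the eigenvalues of $A+B$ are distinct, hence $\mu_i \neq \mu_j$ for $i\neq j$. Thus $1\notin\sigma(J_i^{P_f})$, completing the first claim.

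\emph{Step 4: The ``moreover'' statement.} A value in $\sigma(J_i^{P_f})$ exceeds $1$ iff the corresponding eigenvalue of $TD_i$ lies in $(-1,0)$. By Sylvester's law of inertia applied to $T^{1/2} D_i T^{1/2}$, the number of negative eigenvalues of $TD_i$ equals the number of negative diagonal entries of $D_i$, while Step 2 guarantees that any negative eigenvalue of $TD_i$ automatically lies in $(-1,0)$. Hence $\lambda_{\max}(J_i^{P_f}) > 1$ precisely when $D_i = M_2 - \mu_i$ has a negative diagonal entry, i.e.\ when there exists $j\geq n+1$ with $\mu_j < \mu_i$. If no such pair $(i,j)$ with $i\leq n$, $j\geq n+1$ existed, then $\{\mu_1,\ldots,\mu_n\}$ would be exactly the $n$ smallest eigenvalues of $A+B$ (using distinctness), forcing $\mathrm{Im}(P_f) = \mathrm{Im}(P)$ and hence $P_f = P$, a contradiction. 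So under $P_f \neq P$ such a pair exists, yielding $\lambda_{\max}(J_i^{P_f}) > 1$ for that $i$.
\end{proof}

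The only mildly delicate step is Step 2, where one must recognize $T^{-1}+D_i$ as the matrix of the restriction of $A+\wt{B}[P_f]-\mu_i$ to $\mathrm{Im}(P_f)^\perp$ in order to invoke Genericity Assumption \ref{assump:spectralGap}; everything else is bookkeeping via Sylvester's law and the similarity $TD_i \sim T^{1/2} D_i T^{1/2}$.
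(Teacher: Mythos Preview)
Your proof is correct and follows essentially the same approach as the paper: both arguments reduce to showing that $J_i^{P_f}$ is similar to a Hermitian matrix via conjugation by a square root of $(\wt{B}[P_f]-B)_{22}$, that its spectrum is positive because $(A+\wt{B}[P_f])_{22}-\mu_i \succ 0$ (which is exactly your $T^{-1}+D_i$), and that the inertia of $T^{1/2}D_iT^{1/2}$ matches that of $D_i$ to handle the exclusion of $1$ and the ``moreover'' clause. Your write-up is in fact slightly more careful than the paper's in explicitly naming Genericity Assumption~\ref{assump:spectralGap} as the reason $(A+\wt{B}[P_f])_{22}-\mu_i$ is positive definite, and in invoking Sylvester's law by name.
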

\begin{proof}
In the proof we adopt notation from Remark \ref{remark:ZmatrixGeneral}.
Note that $\big( -S_{22}^{P_f} \big) = (\wt{B}[P_f]-B)_{22}$ and we can alternatively write
\begin{equation}\label{eqn:Jalternate}
Z_i^{P_f} = \left[ \left(A+\wt{B}[P_f]\right)_{22} - \mu_i) \right]^{-1} (\wt{B}[P_f]-B)_{22},
\end{equation}
where $(\,\cdot\,)_{22}$ denotes the lower-right block in the $u_i$ basis. As the product of two positive definite matrices, $Z_i^{P_f}$ has positive eigenvalues. This can be verified by conjugating by $(\wt{B}[P_f]-B)_{22}^{1/2}$.

By \eqref{eqn:JmatrixInBasis}, we have the set equality
\begin{eqnarray}
\sigma\left(Z_i^{P_f}\right) &=& \frac{1}{1+\sigma\Big[ (\wt{B}[P_f]-B)_{22}^{-1} (M_2 - \mu_i) \Big]} \nonumber \\
& = & \frac{1}{1+\sigma\Big[ (\wt{B}[P_f]-B)_{22}^{-1/2} (M_2 - \mu_i) (\wt{B}[P_f]-B)_{22}^{-1/2} \Big]}. \label{eqn:fixedPointZSpectrum}
\end{eqnarray}
Now the signs of the eigenvalues of 
\[
 (\wt{B}[P_f]-B)_{22}^{-1/2} (M_2 - \mu_i) (\wt{B}[P_f]-B)_{22}^{-1/2}
\]
are the same as those of $M_2 - \mu_i$. Since we have assumed (Genericity Assumption \ref{assump:distinctEigen}) that the eigenvalues of $A+B$ are distinct, $M_2 -\mu_i$ is diagonal with nonzero eigenvalues. Thus $1 \notin \sigma \big(Z_i^{P_f}\big)$. This establishes the first statement of the lemma.

Now assume that $P_f \neq P$, and choose $i\in \{1,\ldots,n\}$ such that $M_2 - \mu_i$ has a strictly negative entry. By \eqref{eqn:fixedPointZSpectrum}, $Z_i^{P_f}$ must then have an eigenvalue that is either negative or larger than $1$, but we have already established that $\sigma (Z_i^{P_f}) \subset (0,\infty)$, so the latter possibility must be true.
\end{proof}

Let us identify the tangent space $T_{P_f} \mathcal{D}$ of the space of density matrices at $P_f$ with $\mathbb{C}^{(N-n)n}$ as we did in Section \ref{subsec:linearLocal} at the fixed point $P$. Furthermore, let us identify $\mf{F}$ with a map $\mf{H}^{P_f}$ into $\mathbb{C}^{(N-n)n}$ defined on a neighborhood $\mathcal{U}$ of the origin in $\mathbb{C}^{(N-n)n}$ via a local diffeomorphism $\Phi :\mathcal{U} \rightarrow \mathcal{D}$, defined as in \eqref{eqn:tangentDiffeo} but with the $u_i$ now in the places of the $v_i$. Then like before we have that
\[
D\mf{H}^{P_f}_0 = 
\left(\begin{array}{cccc}
J_1^{P_f} & 0 & \cdots  &0 \\
0 & J_2^{P_f} & \cdots & 0 \\
\vdots & \vdots & \ddots &\vdots \\
0  &0 &\cdots  & J_n^{P_f}
\end{array}\right).
\]
By Lemma \ref{lem:eigenJGeneral}, $D\mf{H}^{P_f}_0$ is invertible and has no eigenvalues of modulus 1. Thus in the language of dynamical systems, $0$ is a hyperbolic fixed point of the dynamical system defined locally by $\mf{H}$ near the origin of $\mathbb{C}^{(N-n)n}$.

Assume that $P_f \neq P$, then $D\mf{H}^{P_f}_0$ has at least one
eigenvalue larger than $1$. If we identify $\mathbb{C}^{(N-n)n}$ with
$\mathbb{R}^{2(N-n)n}$, then the corresponding realification of
$D\mf{H}^{P_f}_0$ (an operator $\mathbb{R}^{2(N-n)n}\rightarrow
\mathbb{R}^{2(N-n)n}$) has the same eigenvalues (though now two copies
of each), hence two eigenvalues larger than $1$. By the stable manifold
theorem, the local stable manifold near the origin has real codimension
at least $2$, and there exists a neighborhood $\mathcal{V}\subset
\mathcal{U} $ of the origin such that if $X \in \mathcal{V}$ is not in
the local stable manifold, then $\left(\mf{H}^{P_f}\right)^m (X) \notin
\mathcal{V}$ for some $m \geq 1$ (refer to Theorems 10.6 and 10.7 of
\cite{TeschlODE}). In particular, this implies the following.
\begin{proposition}\label{prop:repulsive}
For any fixed point $P_f \neq P$, there is a neighborhood
$\mathcal{P}_f$ of $P_f$ in the space $\mathcal{D}$ of density matrices
and a subset $\widetilde{\mathcal{P}}_f \subset \mathcal{P}_f$ such that
$\mathcal{P}_f \backslash \widetilde{\mathcal{P}}_f$ has measure zero in
$\mathcal{D}$. If $P^{(k)} \in \widetilde{\mathcal{P}}_f$ for some $k$, then $P^{(m)} \notin \mathcal{P}_f$ for some $m>k$.
\end{proposition}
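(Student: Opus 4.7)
The plan is to translate the statement into the local coordinate picture already set up before the proposition, then apply the local stable manifold theorem to $\mf{H}^{P_f} = \Phi^{-1}\circ\mf{F}\circ\Phi$. The necessary hypotheses are essentially all in hand: by Genericity Assumption \ref{assump:spectralGap}, $A+\wt{B}[P_f]$ has a positive spectral gap, so $\mf{F}$ is smooth in a neighborhood of $P_f$; by Lemma \ref{lem:eigenJGeneral}, $D\mf{H}^{P_f}_0$ has no zero eigenvalues (so $\mf{H}^{P_f}$ is a local diffeomorphism near the origin) and no eigenvalues on the unit circle (so $0$ is hyperbolic); and because $P_f \neq P$, some $J_i^{P_f}$ has an eigenvalue of modulus strictly greater than $1$.

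Passing to the realification $\mathbb{C}^{(N-n)n}\simeq\mathbb{R}^{2(N-n)n}$, the unstable subspace of $D\mf{H}^{P_f}_0$ acquires real dimension at least $2$, as already noted in the discussion preceding the proposition. The local stable manifold theorem (Theorems 10.6 and 10.7 of \cite{TeschlODE}) then yields a neighborhood $\mathcal{V}\subset\mathcal{U}$ of $0$ and an embedded $C^k$ submanifold $W^s_{\mathrm{loc}}\subset\mathcal{V}$ of codimension at least $2$, with the property that whenever $X\in\mathcal{V}\setminus W^s_{\mathrm{loc}}$ there is some $n\geq 1$ with $(\mf{H}^{P_f})^n(X)\notin\mathcal{V}$. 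As a positive-codimension embedded smooth submanifold, $W^s_{\mathrm{loc}}$ has Lebesgue measure zero in $\mathbb{R}^{2(N-n)n}$.

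To conclude, I would set $\mathcal{P}_f := \Phi(\mathcal{V})$ and $\widetilde{\mathcal{P}}_f := \Phi(\mathcal{V}\setminus W^s_{\mathrm{loc}})$, shrinking $\mathcal{V}$ beforehand if necessary so that $\mf{H}^{P_f}$ is well-defined on $\mathcal{V}$ (i.e., $\mf{F}(\mathcal{P}_f)\subset\Phi(\mathcal{U})$), which is possible by continuity. Because $\Phi$ is a diffeomorphism and the natural $U(N)$-invariant measure on $\mathcal{D}\simeq\mathbf{Gr}(n,\mathbb{C}^N)$ pulls back to a measure with smooth positive density on $\mathcal{V}$, the set $\mathcal{P}_f\setminus\widetilde{\mathcal{P}}_f = \Phi(W^s_{\mathrm{loc}})$ has measure zero in $\mathcal{D}$. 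Given $P^{(k)}\in\widetilde{\mathcal{P}}_f$, let $X^{(k)} := \Phi^{-1}(P^{(k)})\in\mathcal{V}\setminus W^s_{\mathrm{loc}}$ and take the least $n\geq 1$ with $(\mf{H}^{P_f})^n(X^{(k)})\notin\mathcal{V}$; then the intermediate iterates all lie in $\mathcal{V}$, so applying $\Phi\circ\mf{H}^{P_f} = \mf{F}\circ\Phi$ inductively gives $P^{(k+j)} = \Phi((\mf{H}^{P_f})^j(X^{(k)}))$ for $j=1,\ldots,n$, and $m := k+n > k$ satisfies $P^{(m)}\notin\mathcal{P}_f$.

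There is no real technical obstacle here; the reasoning is almost entirely bookkeeping around the stable manifold theorem, which the preceding discussion has already set up. The two points that require a brief justification are (i) that the natural measure on $\mathcal{D}$ treats $\Phi(W^s_{\mathrm{loc}})$ as a null set, which holds because any smooth positive-codimension submanifold of a smooth manifold is measure-zero for any measure that is locally equivalent to Lebesgue measure in charts, and (ii) that $\mathcal{V}$ can be shrunk without destroying the escape conclusion, which is immediate from the definition of $W^s_{\mathrm{loc}}$.
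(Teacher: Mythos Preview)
Your proposal is correct and follows essentially the same approach as the paper: the paper's argument is the discussion immediately preceding the proposition, which likewise passes to the local chart $\Phi$, invokes Lemma \ref{lem:eigenJGeneral} to conclude that $0$ is a hyperbolic fixed point of $\mf{H}^{P_f}$ with unstable subspace of real dimension at least $2$ after realification, and then applies the stable manifold theorem (citing the same reference, Theorems 10.6 and 10.7 of \cite{TeschlODE}) to obtain $\mathcal{V}$ and the escape property. Your write-up is in fact slightly more careful than the paper's in making the translation back through $\Phi$ explicit and in justifying that $\Phi(W^s_{\mathrm{loc}})$ is measure zero in $\mathcal{D}$.
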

\begin{remark}
Note that a notion of measure zero can be defined on any smooth manifold
without actually choosing a measure or a Riemannian
structure. One way to produce such a notion is to pick any Riemannian metric on $\mathcal{D}$ and consider the corresponding volume measure. The measure zero sets with respect to this volume measure will be the same regardless of the choice of metric.
\end{remark}

Proposition \ref{prop:repulsive} is roughly saying that generically near a fixed point $P_f \neq P$, points are repelled from $P_f$. However, in the important special case that we are considering $A,B$ to be real-symmetric, note that $\mf{F}$ can be interpreted as a map from $\mathcal{D}_{\mathbb{R}}$ into itself. If we initialize with a real-symmetric guess, then we never leave the submanifold of real-symmetric projectors. The notion of full measure does not project from $\mathcal{D}$ to the submanifold $\mathcal{D}_{\mathbb{R}}$ of \textit{real-symmetric} orthogonal projectors, so Proposition \ref{prop:repulsive} \textit{does not} imply that generically (within $\mathcal{D}_{\mathbb{R}}$) points are repelled from $P_f$, and we must state this result separately.

\begin{proposition}\label{prop:repulsiveReal}
Suppose that $A$ and $B$ are real-symmetric, so $\mf{F}$ maps $\mathcal{D}_{\mathbb{R}}$ into itself. For any fixed point $P_f \neq P$ in $\mathcal{D}_{\mathbb{R}}$, there is a neighborhood $\mathcal{P}_f$ of $P_f$ in the space $\mathcal{D}_{\mathbb{R}}$ a subset $\widetilde{\mathcal{P}}_f \subset \mathcal{P}_f$ such that $\mathcal{P}_f \backslash \widetilde{\mathcal{P}}_f$ has measure zero in $\mathcal{D}_{\mathbb{R}}$. If $P^{(k)} \in \widetilde{\mathcal{P}}_f$ for some $k$, then $P^{(m)} \notin \mathcal{P}_f$ for some $m>k$.
\end{proposition}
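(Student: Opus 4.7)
The plan is to mimic the proof of Proposition \ref{prop:repulsive} but carry out the linearization within the real-symmetric manifold $\mathcal{D}_{\mathbb{R}}$, noting that the key eigenvalue-of-Jacobian input (Lemma \ref{lem:eigenJGeneral}) already supplies a \emph{real} eigenvalue larger than $1$, so the realification trick is not needed.

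First I would set up the real analog of the parametrization from Section \ref{subsec:linearLocal}. Since $A,B$ are real-symmetric and (by Genericity Assumption \ref{assump:distinctEigen}) $A+B$ has simple eigenvalues, every fixed point $P_f \in \mathcal{D}_\mathbb{R}$ admits a real orthonormal eigenbasis $u_1,\dots,u_N$ of $A+B$ with $P_f = \sum_{i=1}^n u_i u_i^*$. The diffeomorphism $\Phi$ of \eqref{eqn:tangentDiffeo}, built from this basis, restricts to a local diffeomorphism from a neighborhood of $0$ in $\mathbb{R}^{(N-n)\times n}$ onto a neighborhood of $P_f$ in $\mathcal{D}_\mathbb{R}$; this identifies the tangent space $T_{P_f}\mathcal{D}_\mathbb{R}$ with $\mathbb{R}^{(N-n)n}$. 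Using this chart, $\mf{F}$ corresponds to a smooth map $\mf{H}^{P_f}$ from a neighborhood of the origin in $\mathbb{R}^{(N-n)n}$ into itself (smoothness holds because $A+\wt{B}[P_f]$ has a spectral gap by Genericity Assumption \ref{assump:spectralGap}, and smoothness is preserved by restriction).

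Next I would observe that the Jacobian $D\mf{H}^{P_f}_0$, computed exactly as in Remark \ref{remark:ZmatrixGeneral}, is the block-diagonal real matrix $\mathrm{diag}(J_1^{P_f},\dots,J_n^{P_f})$. The matrices $J_i^{P_f}$ defined in \eqref{eqn:JmatrixInBasis} are now real because the $u_i$, $B_{ij}^{P_f}$, and $M_2$ are all real. Crucially, Lemma \ref{lem:eigenJGeneral} shows $\sigma(J_i^{P_f}) \subset (0,\infty)\setminus\{1\}$, and that when $P_f \neq P$ some $J_i^{P_f}$ has an eigenvalue exceeding $1$. Since all eigenvalues are already positive reals, this gives at least one real eigenvalue of $D\mf{H}^{P_f}_0$ strictly larger than $1$, and $D\mf{H}^{P_f}_0$ has no eigenvalue on the unit circle. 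Thus $0$ is a hyperbolic fixed point of the real dynamical system and the unstable subspace has real dimension at least one.

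Then I would invoke the stable manifold theorem for a hyperbolic fixed point in $\mathbb{R}^{(N-n)n}$ (e.g., Theorems 10.6 and 10.7 of \cite{TeschlODE}) to obtain a neighborhood $\mathcal{V}$ of $0$ in $\mathbb{R}^{(N-n)n}$ and a local stable manifold $W^s_{\mathrm{loc}}$ of codimension at least $1$ (hence of Lebesgue measure zero) such that every trajectory starting in $\mathcal{V} \setminus W^s_{\mathrm{loc}}$ eventually exits $\mathcal{V}$. Pushing this forward through $\Phi$ produces a neighborhood $\mathcal{P}_f$ of $P_f$ in $\mathcal{D}_\mathbb{R}$ and a full-measure subset $\widetilde{\mathcal{P}}_f := \mathcal{P}_f \setminus \Phi(W^s_{\mathrm{loc}})$ with the stated repelling property; here ``measure zero'' on the smooth manifold $\mathcal{D}_\mathbb{R}$ can be defined via any Riemannian volume, and $\Phi$ being a diffeomorphism preserves null sets. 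The main subtlety to be careful about is simply that our Jacobian eigenvalue analysis carries over verbatim to the real setting without needing realification—once that is observed the stable manifold argument is essentially identical to the complex case, with the codimension bound improved from $2$ (real) to $1$ (real) as is consistent with the halved real dimension of $\mathcal{D}_\mathbb{R}$.
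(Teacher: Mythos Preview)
Your proposal is correct and follows essentially the same approach as the paper: replace $\mathbb{C}$ by $\mathbb{R}$ in the linearization of Section~\ref{subsec:linearizationFP}, observe that the Jacobian has the same (real) eigenvalues as before so that no realification is needed, and conclude via the stable manifold theorem that the local stable manifold has real codimension at least one. The paper's proof is terser but makes exactly these points.
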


\begin{proof}
By exactly the same proof is above (with $\mathbb{R}$ in place of $\mathbb{C}$), the restriction of the dynamical system defined by $\mf{F}$ to the submanifold $\mathcal{D}_{\mathbb{R}}$ has a hyperbolic fixed point at $P_f$ with invertible Jacobian and the same eigenvalues as before (though only one copy of each now, instead of two). In particular, the stable manifold within $\mathcal{D}_{\mathbb{R}}$ has real codimension at least one, and by the same reasoning as before, this implies the statement.
\end{proof}

Propositions \ref{prop:repulsive} and \ref{prop:repulsiveReal} formalize
the notion that any fixed point $P_f \neq P$ is repulsive. More
quantitatively speaking, based on Lemma \ref{lem:eigenJGeneral} we
expect to see ``linear divergence'' from any fixed point $P_f \neq P$
with rate $\max_{i=1,\ldots,n} \lambda_{\max}\big(Z_i^{P_f}\big) > 1$,
but we do not formalize this notion.

\subsection{Fixed points are saddle points}
\label{subsec:fpsp}
Consider the functional 
\begin{equation}
F(Q) = \mathrm{Tr}\big[\mf{F}(Q)\left(A+\wt{B}[Q]\right)\mf{F}(Q)\big] = \sum_{i=1}^n \lambda_i \left\{A+\wt{B}[Q]\right\}.
  \label{eqn:FQ}
\end{equation}
By eigenvalue monotonicity, $F(P^{(k)})$ is non-increasing in $k$. We
claim that fixed points of $\mf{F}$ are critical points of $F$ and that
a fixed point $P_f \neq P$ is not a local minimum. In fact, a fixed point
$P_f \neq P$ is a strict saddle point of $F$ in that it is a strict
local maximum of $F$ along some direction.

We state a more detailed version this fact formally in Proposition \ref{prop:saddlePoint} below. This result informs our understanding of the behavior of the iteration
near fixed points (see the discussion at the beginning of Section
\ref{subsec:globalConvergence}). However, we will
not use it directly to establish global convergence, and its proof is
largely computational, so we relegate this proof to an appendix.

\begin{proposition}\label{prop:saddlePoint}
Let $Q=Q(t)$ be a twice-differentiable density matrix-valued function of a single variable with $Q(0) = P_f$ a fixed point. Then $(F(Q))'(0) = 0$. Moreover, if $P_f \neq P$, then there exists such a function $Q(t)$ which additionally satisfies $(F(Q))''(0) < 0$. In fact, if we take $Q(t) = \Phi(t X)$, where $X\in \mathbb{C}^{(N-n)n}$ is an eigenvector of $D\mf{H}^{P_f}_0$ with eigenvalue larger than \textnormal{[}resp., smaller than\textnormal{]} $1$, then $(F(Q))''(0) < 0$ \textnormal{[}resp., $>0$\textnormal{]}.
\end{proposition}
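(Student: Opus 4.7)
The plan is first to verify the critical-point claim via first-order eigenvalue perturbation theory, and then to establish the sign of $(F(Q))''(0)$ along the specified direction by sandwiching $F$ between two auxiliary functionals whose Hessians at $P_f$ can be computed in closed form.

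For the critical-point assertion, apply the Hellmann--Feynman-type identity $\frac{d}{dt}\sum_{i=1}^n \lambda_i\{H(t)\} = \mathrm{Tr}[P(H(t))H'(t)]$, which is valid because Genericity Assumption \ref{assump:spectralGap} ensures that $H_0 = A+\wt{B}[P_f]$ has a positive spectral gap. With $H(t) = A+\wt{B}[Q(t)]$ one has $P(H_0) = P_f$ and $H'(0) = D\wt{B}_{P_f}[Q'(0)]$ from Lemma \ref{lem:fB}; inspection of that formula shows that both summands in $D\wt{B}_{P_f}$ carry a factor of $B-\wt{B}[P_f]$ adjacent to $P_f$, and the identity $(B-\wt{B}[P_f])P_f = 0$ (which holds because $\wt{B}[P_f]$ agrees with $B$ on $\mathrm{Im}(P_f)$) forces $\mathrm{Tr}[P_f\,H'(0)] = 0$.

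The key idea for the second derivative is to introduce
\[
\Psi(Q) := \mathrm{Tr}[\mf{F}(Q)(A+B)] \qquad \text{and} \qquad \Theta(Q) := \mathrm{Tr}[Q(A+B)].
\]
Using Lemma \ref{lem:negdef} ($\wt{B}[Q] \succeq B$) together with the Ky--Fan minimax characterization of $F$, one checks that $\Psi(Q) \le F(Q) \le \Theta(Q)$, with both inequalities saturated precisely at fixed points of $\mf{F}$. Since $F-\Psi$ and $\Theta-F$ are nonnegative and vanish at $P_f$, their second derivatives at $t=0$ along any smooth curve $Q(t)$ through $P_f$ are nonnegative, yielding the Hessian sandwich $\Psi(Q(t))''(0) \le (F(Q))''(0) \le \Theta(Q(t))''(0)$. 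Both bounds can be computed explicitly for $Q(t) = \Phi(tX)$: working in the $U_N$-basis in which $A+B$ is diagonal with eigenvalues $\mu_i$, the second-order Taylor expansion of $\Phi(tX)$ (whose block-diagonal second-order correction is $\mathrm{diag}(-t^2 X^*X,\,t^2 XX^*)$, as follows from computing $[\mathcal{X},[\mathcal{X},D_n]]$) gives $\Theta(Q(t))''(0) = 2\sum_{i,a}(\mu_{n+a}-\mu_i)|X_{ai}|^2$. For $\Psi$, write $\Psi(Q(t)) = \mathrm{Tr}[\mf{F}(Q(t))(A+B)]$ and use $\mf{F}(Q(t)) = \Phi(\mf{H}^{P_f}(tX))$; the block-diagonal second-order contribution from $\Phi$ produces the same quadratic form evaluated at $d\mf{H}_0^{P_f}[X]$, while the tangent-valued contribution $D\Phi_0[D^2\mf{H}^{P_f}_0[X,X]]$ is block-off-diagonal in $U_N$ and hence has zero trace against $A+B$, yielding $\Psi(Q(t))''(0) = 2\sum_{i,a}(\mu_{n+a}-\mu_i)|(d\mf{H}_0^{P_f}[X])_{ai}|^2$.

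Specializing to $X$ supported in column $k$ with $X_k = \xi$ an eigenvector of $J_k^{P_f}$ with eigenvalue $\alpha$, so that $d\mf{H}_0^{P_f}[X]$ is supported in column $k$ with value $\alpha\xi$, the bounds collapse to $\Theta(Q(t))''(0) = 2\xi^*(M_2-\mu_k)\xi$ and $\Psi(Q(t))''(0) = 2\alpha^2\,\xi^*(M_2-\mu_k)\xi$. Inverting the defining relation $J_k^{P_f} = [I+(-S_{22}^{P_f})^{-1}(M_2-\mu_k)]^{-1}$ on $\xi$ yields $(M_2-\mu_k)\xi = \tfrac{\alpha-1}{\alpha}S_{22}^{P_f}\xi$, so $\xi^*(M_2-\mu_k)\xi = \tfrac{\alpha-1}{\alpha}\xi^*S_{22}^{P_f}\xi$. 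Since $S_{22}^{P_f}\prec 0$, for $\alpha > 1$ the upper bound gives $\Theta(Q(t))''(0) < 0$ and hence $(F(Q))''(0) < 0$, while for $0 < \alpha < 1$ the lower bound gives $\Psi(Q(t))''(0) > 0$ and hence $(F(Q))''(0) > 0$. Lemma \ref{lem:eigenJGeneral} supplies the existence of some $J_k^{P_f}$ with eigenvalue $>1$ whenever $P_f \neq P$, completing the argument. The main obstacle I anticipate is conceptual rather than technical: finding sandwiching functionals that are simultaneously saturated at every fixed point of $\mf{F}$ and sign-definite along eigendirections of $D\mf{H}_0^{P_f}$; once this setup is in place, the remaining computations reduce to routine block-matrix bookkeeping.
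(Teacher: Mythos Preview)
Your proof is correct and takes a genuinely different route from the paper's. The paper differentiates $F(Q(t))$ directly: after obtaining the first-derivative identity $(F(Q))' = 2\,\mathrm{Tr}\big[\mf{F}(Q)(B-\wt{B}[Q])Q'\mf{F}(Q)\big]$, it differentiates once more, uses $P_f(B-\wt{B}[P_f])=0$ to kill several terms, and substitutes $D\mf{F}_{P_f}$ from Lemma~\ref{lem:composedJacobianGeneral} to arrive at the closed-form value $(F(Q))''(0) = 2(1-\sigma)\,X_j^*(\wt{B}[P_f]-B)_{22}X_j$, whose sign is that of $1-\sigma$.

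Your sandwich $\Psi \le F \le \Theta$, saturated at $P_f$, replaces that direct second-derivative calculation by computing Hessians of two functionals that are \emph{linear} in $Q$ (resp.\ $\mf{F}(Q)$), which makes the $\Phi$-chart computation essentially mechanical. The price is that you obtain only the bounds $2\alpha^2\,\xi^*(M_2-\mu_k)\xi \le (F(Q))''(0) \le 2\,\xi^*(M_2-\mu_k)\xi$ rather than the exact value; the paper's explicit formula sits strictly between your bounds (it equals $2(1-\alpha)\,\xi^*(-S_{22}^{P_f})\xi$, and your quantity $\xi^*(M_2-\mu_k)\xi$ equals $\tfrac{1-\alpha}{\alpha}\,\xi^*(-S_{22}^{P_f})\xi$). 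Conceptually, your approach makes transparent \emph{why} the sign comes out as it does---it ties the saddle-point structure directly to the Ky--Fan inequality and to the optimality property $\wt{B}[Q]\succeq B$ of Lemma~\ref{lem:negdef}---whereas the paper's approach yields a sharper quantitative statement with less structural input. One minor comment: your phrase ``saturated precisely at fixed points'' overstates slightly (equality in $\Psi\le F$ can occur more generally), but only the saturation \emph{at} $P_f$ is needed, and that is clear.
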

\begin{proof}
See Appendix \ref{appendix:saddlePoint}.
\end{proof}

\subsection{Global convergence}\label{subsec:globalConvergence}

We already have a fairly complete picture of the global behavior of
Algorithm \ref{alg:acelineig}. In summary, we know that the fixed point
iteration converges to a fixed point, and we know that fixed points $P_f
\neq P$ are repulsive in the sense of Proposition \ref{prop:repulsive}.
We also know that such fixed points are strict saddle points of the
functional $F$ in Eq.~\eqref{eqn:FQ}. Moreover, along the repulsive
directions
at $P_f$ this functional has a strict local maximum. With a bit more
work, it is possible to show that for almost all $Q$ in a sufficiently
small neighborhood of $P_f$, there exists $m=m(Q)$ such that $F(\mf{F}^m
(Q)) < F(P_f)$. (We already know that generically such $Q$ must escape
the neighborhood, but when they do so, they should align with the
repulsive directions, so the value of $F$ must fall below $F(P_f)$.
We omit a formal proof of this fact.) Thus by eigenvalue monotonicity,
if we have converged sufficiently close to a fixed point $P_f \neq P$,
and if we apply a small random perturbation and then restart Algorithm
\ref{alg:acelineig} from this point, then we will converge to another
fixed point $\widetilde{P}_f$ with $F(\widetilde{P}_f) < F(P_f)$. Repeating this process
finitely many times will bring us to the desired fixed point $P$. This
suggests a satisfactory notion of the global convergence up to perturbation.

Nonetheless, it is still desirable to show that for almost every choice
of initialization $P^{(0)}\in\mathcal{D}$, Algorithm \ref{alg:acelineig}
converges to $P$. (Similarly in the special case of real-symmetric $A$
and $B$,  Algorithm \ref{alg:acelineig} converges to $P$ for a.e. choice
$P^{(0)}\in\mathcal{D}_{\mathbb{R}}$.) To use an analogy, a fixed point
$P_f \neq P$ is like an egg resting on top of a barn. We know that if we
apply a slight random perturbation to the egg, it will fall off the barn
and never return to the top. But we would like to show that it is
impossible for the egg to get stuck on top of the barn in the first
place!

This is indeed true, and the key lemma is the following.

\begin{lemma}[Egg on barn lemma]\label{lem:preimage}
If we fix $B \in \mathbf{H}_N$, then for almost any $A \in \mathbf{H}_N$ (with respect to the Lebesgue measure on $\mathbf{H}_N$), we have the following: if $S$ has zero measure in the space $\mathcal{D}$ of density matrices, then $\mf{F}^{-1}(S)$ also has zero measure in $\mathcal{D}$, where $\mf{F}$ is considered as a map $\mathcal{D}\rightarrow\mathcal{D}$.

Similarly, if we fix any $B \in \mathbf{S}_N$, then for almost any $A \in \mathbf{S}_N$ (with respect to the Lebesgue measure on $\mathbf{S}_N$), we have the following: 
if $S$ has zero measure in the space $\mathcal{D}_{\mathbb{R}}$ of density matrices, then $\mf{F}^{-1}(S)$ also has zero measure in $\mathcal{D}_{\mathbb{R}}$, where $\mf{F}$ is considered as a map $\mathcal{D}_{\mathbb{R}}\rightarrow\mathcal{D}_{\mathbb{R}}$.
\end{lemma}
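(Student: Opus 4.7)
The plan is to treat $\mf{F}$ as a smooth (indeed real-analytic) map between equi-dimensional manifolds outside a set of measure zero, and to show that its critical set also has measure zero; the preimage statement will then follow from a routine change-of-variables argument. Let $\mathcal{O}\subset\mathcal{D}$ be the open set on which $A+\wt{B}[Q]$ has a strictly positive gap between its $n$-th and $(n+1)$-th eigenvalues. On $\mathcal{O}$, $\mf{F}$ factors as $\Pi\circ H$, where $H:Q\mapsto A+\wt{B}[Q]$ is manifestly real-analytic (see Remark \ref{rem:compressionContinuity}) and $\Pi$ sends a Hermitian matrix to the orthogonal projector onto the span of its bottom $n$ eigenvectors. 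The latter is real-analytic on the open set of Hermitian matrices with a gap at position $n$, as one sees from the Riesz projector formula (a contour integral of the resolvent; cf.\ Lemma \ref{lem:contour}). The complement $\mathcal{D}\setminus\mathcal{O}$ is the zero set of a real-analytic function of $Q$ extracted from the discriminant of the characteristic polynomial of $A+\wt{B}[Q]$; by Lemma \ref{lem:orderzero} and Genericity Assumption \ref{assump:distinctEigen}, this function takes the positive value $\lambda_g$ at $Q=P$ and is therefore not identically zero, so $\mathcal{D}\setminus\mathcal{O}$ has measure zero.

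The next step is to show that the critical set $R:=\{Q\in\mathcal{O}\,:\,D\mf{F}_Q\text{ singular}\}$ has measure zero. Since $\det D\mf{F}$ is real-analytic on $\mathcal{O}$ and equi-dimensionality makes the determinant a well-defined scalar function after a choice of local frames, on every connected component where it is not identically zero its zero set has measure zero. To rule out identical vanishing on every component of $\mathcal{O}$, I would exploit the extra freedom of perturbing $A$. Consider the enlarged map $G:\mathbf{H}_N\times\mathcal{O}_{\text{ext}}\to\mathcal{D}$ defined by $G(A',Q)=\Pi(A'+\wt{B}[Q])$ on the open set where $A'+\wt{B}[Q]$ has a gap. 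A computation using Lemma \ref{lem:contour} shows that $D_{A'}G$ is surjective onto $T_{G(A',Q)}\mathcal{D}$: any tangent direction $\widetilde{V}\bigl(\begin{smallmatrix} 0 & X^* \\ X & 0\end{smallmatrix}\bigr)\widetilde{V}^*$ at the bottom-$n$ projector can be realized by choosing $\Delta A'$ of the form $\sum_{i,a}c_{ia}(u_au_i^*+u_iu_a^*)$ with appropriate coefficients. Hence $G$ is a submersion in the $A'$ direction on its smooth domain. A parametric transversality argument (the transversality density theorem, applied to the stratified variety of singular linear maps inside the bundle $\mathrm{Hom}(T\mathcal{D},T\mathcal{D})$ over $\mathcal{O}$) then yields that for a.e.\ $A'\in\mathbf{H}_N$, the slice $Q\mapsto G(A',Q)$ has critical set of measure zero in $\mathcal{O}$. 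Intersecting the resulting full-measure set of $A'$ with the full-measure set from Lemma \ref{lem:genAssump} gives a full-measure set of admissible $A$.

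Once $\mathcal{O}^c$ and $R$ are shown to have measure zero in $\mathcal{D}$, the conclusion is standard. Given any $S\subset\mathcal{D}$ of measure zero, the regular set $\mathcal{O}\setminus R$ is a countable union of open sets $U_\alpha$ on each of which $\mf{F}|_{U_\alpha}$ is a diffeomorphism onto its image (by the inverse function theorem, together with second-countability of $\mathcal{D}$); on each $U_\alpha$, $\mf{F}^{-1}(S)\cap U_\alpha$ has measure zero by the ordinary change-of-variables formula. Adjoining the measure-zero sets $R$ and $\mathcal{O}^c$ writes $\mf{F}^{-1}(S)$ as a countable union of null sets, hence as a null set. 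The real-symmetric case is handled identically, with $\mathbf{H}_N$ and $\mathcal{D}$ replaced throughout by $\mathbf{S}_N$ and $\mathcal{D}_{\mathbb{R}}$, using the real version of the transversality theorem and the real-symmetric part of Lemma \ref{lem:genAssump}.

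The main obstacle is the critical-set step. Away from fixed points, the Jacobian $D\mf{F}_Q$ has a complicated algebraic form: the proof of Lemma \ref{lem:composedJacobian} uses the identities $(B-\wt{B}[P])v_i=0$ and $(PBP)^\dagger Bv_i=v_i$, neither of which holds for general $Q$, so one cannot directly read off invertibility from a clean block structure. It is therefore not enough to check invertibility at fixed points (which, moreover, may not populate every component of $\mathcal{O}$), and the parametric-transversality step is what really does the work: exploiting the extra directions afforded by varying $A$ restores enough generic flexibility to conclude almost-sure non-degeneracy of the Jacobian on $\mathcal{O}$.
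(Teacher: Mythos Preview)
Your overall strategy---show that $\mf{F}$ is real-analytic on a full-measure open set, show its critical set is null, then cover by local diffeomorphisms---is exactly the paper's. The final covering step is fine. But two of the three substantive steps have genuine gaps.

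\textbf{The complement of $\mathcal{O}$.} You write that $\mathcal{D}\setminus\mathcal{O}$ is ``the zero set of a real-analytic function extracted from the discriminant'' which ``takes the positive value $\lambda_g$ at $Q=P$''. These two assertions are incompatible: the gap $\lambda_{n+1}-\lambda_n$ of $A+\wt{B}[Q]$ is \emph{not} real-analytic in $Q$ (eigenvalues only are where they stay simple), while the discriminant is real-analytic but does not equal $\lambda_g$ at $P$ and detects a different condition (any repeated eigenvalue, not just the $n$/$n{+}1$ collision). You can salvage this by observing $\mathcal{D}\setminus\mathcal{O}$ is \emph{contained in} the zero set of the discriminant of $A+\wt{B}[Q]$, but then you must show that discriminant is not identically zero on $\mathcal{D}$. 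Genericity Assumption~\ref{assump:distinctEigen} does not give this: it says $A+B$ has distinct eigenvalues, but Lemma~\ref{lem:orderzero} only pins down the bottom $n$ eigenvalues of $A+\wt{B}[P]$; the top $N-n$ may well coincide. (The paper explicitly allows $P\notin\mathcal{W}$ for this reason.) The paper resolves this by applying the parametric Transversality Theorem to $\Phi:Q\mapsto A+\wt{B}[Q]$ with $A$ as parameter, against the codimension-$\geq 2$ strata of the repeated-eigenvalue locus in $\mathbf{K}_N$. This is where the ``almost every $A$'' enters, and crucially the codimension-$\geq 2$ conclusion also gives \emph{connectedness} of the good set.

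\textbf{The critical set.} Your parametric transversality step is misapplied. You establish that $D_{A'}G$ is surjective onto $T\mathcal{D}$, i.e., that $G$ is a submersion in the parameter direction. Parametric transversality then says: for a.e.\ $A'$, the slice $Q\mapsto G(A',Q)$ is transversal to any fixed submanifold of the \emph{target} $\mathcal{D}$. It says nothing about the critical set of the slice, which lives in the \emph{source}. To control critical sets by transversality you would need the $1$-jet map $(A',Q)\mapsto D_Q G(A',Q)$ to be transversal to the singular stratum in the Hom-bundle, a strictly stronger condition that surjectivity of $D_{A'}G$ does not provide. The paper avoids this entirely: once the good set $\mathcal{W}$ is known to be \emph{connected} (from the codimension-$\geq 2$ argument above), real-analyticity of $\det D\mf{F}$ on $\mathcal{W}$ reduces the question to exhibiting a single point where the Jacobian is invertible---and $P$ (or a nearby point of $\mathcal{W}$) works, since $D\mf{F}_P$ has strictly positive eigenvalues by Lemma~\ref{lem:eigenJ}. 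You yourself note that checking at fixed points ``may not populate every component of $\mathcal{O}$''; the paper's answer is that there is only one component.
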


The proof of Lemma \ref{lem:preimage} is technical. The main difficulty is
that $\mf{F}$ is not a diffeomorphism,
and indeed is not even continuous. However, it is real-analytic on an
open, connected subset of full measure, and this characterization allows us to rule out pathological behavior. We postpone the proof of Lemma \ref{lem:preimage} to
Appendix \ref{sec:eggOnBarn}. Let us now use this lemma to prove the global
convergence property.

Fix $K\in\{\mathbb{C},\mathbb{R}\}$, and assume that $B$ is such that
Lemma \ref{lem:preimage} applies. For a fixed point $P_f$, let
$S_{P_f}=\{Q\in \mathcal{D}_K \,: \, \mf{F}^k(Q) \rightarrow P_f\}$, so
$\mathcal{D}_K = \bigcup_{P_f\,\mathrm{fixed}} S_{P_f}$. If $P_f \neq
P$, then by Proposition \ref{prop:repulsive} or Proposition
\ref{prop:repulsiveReal}, for any $Q \in S_{P_f}$, we must have that
$\mf{F}^k(Q) \in \mathcal{P}_f \backslash \widetilde{\mathcal{P}}_f$ for
some $k = k(Q) \geq 0$. This implies that $S_{P_f} \subset
\bigcup_{k\geq 0} \mf{F}^{-k}(\mathcal{P}_f \backslash
\widetilde{\mathcal{P}}_f)$. But by Proposition \ref{prop:repulsive},
$\mathcal{P}_f \backslash \widetilde{\mathcal{P}}_f$ has measure zero.
By Lemma \ref{lem:preimage} (and induction),
$\mf{F}^{-k}(\mathcal{P}_f \backslash \widetilde{\mathcal{P}}_f)$ has
measure zero for all $k\geq 0$. Consequently, $S_{P_f}$ has measure zero
for all $P_f \neq P$. Hence $S_P$ has full measure, as desired. This completes the proof of
Theorem~\ref{thm:main3}.

\section*{Acknowledgment} 
The work of L. L. is partially supported by the National Science
Foundation under grant DMS-1652330, the Alfred P. Sloan fellowship, and
the DOE Center for Applied Mathematics for Energy Research Applications
(CAMERA) program. The work of M.L. is partially supported by the National Science Foundation Graduate Research Fellowship Program under grant DGE-1106400.
\appendix

\section{Derivative calculations in linearization}\label{appendix:calculations}

\proofof{Lemma \ref{lem:contour}}  The assumption of the positive spectral gap guarantees the existence
  of a simple contour $\mc{C}$ in the complex plane surrounding only
  the lowest $n$ eigenvalues of $H$. Using the contour integral representation of the density matrix $P$, we
  have
  \begin{equation}
    P = \frac{1}{2\pi\I} \oint_{\mc{C}} (z-H)^{-1} \ud z.
    \label{eqn:Pcontour}
  \end{equation}
  Assume that $\epsilon$ is small enough so that the contour $\mc{C}$ only
  surrounds the lowest $n$ eigenvalues of $H+\epsilon \Delta H$ as well.
  Then
  \[
  P_\epsilon = \frac{1}{2\pi\I} \oint_{\mc{C}} (z-H-\epsilon \Delta H)^{-1} \ud z.
  \]
  Since
  \[
  (z-H-\epsilon \Delta H)^{-1} = (z-H)^{-1} + \epsilon (z-H)^{-1} \Delta
  H (z-H)^{-1} + O(\epsilon^2),
  \]
  we have
  \begin{equation}
    \lim_{\epsilon\to 0} \frac{P_\epsilon-P}{\epsilon} = \frac{1}{2\pi\I} \oint_{\mc{C}}
    (z-H)^{-1}\Delta H(z-H)^{-1} \ud z.
    \label{eqn:contour_2}
  \end{equation}

  Next, apply the spectral decomposition of $H$ to \eqref{eqn:contour_2} to obtain
  \begin{equation}
    \begin{split}
    D P_H[\Delta H] =&  \frac{1}{2\pi\I} \oint_{\mc{C}}
    (z-H)^{-1}\Delta H(z-H)^{-1} \ud z\\
    =& \sum_{i,a=1}^{N} \frac{1}{2\pi\I} \oint_{\mc{C}}
    (z-\mu_{a})^{-1} u_{a} (u_{a}^{*} \Delta H u_{i}) u_{i}^{*} 
    (z-\mu_{i})^{-1} \ud z\\
    =& 
    \sum_{i,a=1}^{N} \frac{1}{\mu_{i}-\mu_{a}}\left[\frac{1}{2\pi\I} \oint_{\mc{C}}
    \left(\frac{1}{z-\mu_{i}} - \frac{1}{z-\mu_{a}}\right) \ud
    z\right] u_{a} (u_{a}^{*} \Delta H u_{i}) u_{i}^{*} \\
    =&\sum_{i=1}^{n}\sum_{a=n+1}^{N} \frac{1}{\mu_{i}-\mu_{a}}
    u_{a} (u_{a}^{*} \Delta H u_{i}) u_{i}^{*} + \mathrm{h.c.}
    \label{eqn:contourB_2}
    \end{split}
  \end{equation}
  In the last equation of \eqref{eqn:contourB_2}, we have used the
  Cauchy integral formula. This establishes the first desired equality. For the second equality, simply collapse the inner sum over $a$. $\square$

\proofof{Lemma \ref{lem:fB}}
Recall that $\wt{B}[P_\epsilon] = B(P_\epsilon B P_\epsilon)^\dagger B =
BG(\epsilon) B$, where $G (\epsilon) := (P_\epsilon B
P_\epsilon)^\dagger$. We want to evaluate the derivative in $\epsilon$
of $G(\epsilon)$ at $\epsilon = 0$. To do so, we treat the pseudoinverse
as follows. Note that we can alternatively write
\[
G(\epsilon) = \left[ P_\epsilon B P_\epsilon + \lambda (I-P_\epsilon) \right]^{-1} - \lambda^{-1} (I-P_\epsilon)
\]
for any $\lambda >0$.
Then 
\begin{eqnarray}
G'(0)
& = & -\left[ P B P + \lambda (I-P) \right]^{-1}
\left[ (\Delta P) B P + P B (\Delta P) - \lambda \Delta P  \right]
\left[ P B P + \lambda (I-P) \right]^{-1} \nonumber \\
& & \qquad + \lambda^{-1} (\Delta P) \nonumber \\
& = &  -\left[ (P B P)^\dagger + \lambda^{-1} (I-P) \right]
\left[ P B (\Delta P) + \mathrm{h.c.}   \right]
\left[ (P B P)^\dagger + \lambda^{-1} (I-P) \right] \nonumber \\
& & \qquad + \lambda \left[ (P B P)^\dagger + \lambda^{-1} (I-P) \right]
\left[ \Delta P  \right]
\left[ (P B P)^\dagger + \lambda^{-1} (I-P) \right] \label{eqn:GderivTerm} \\
& & \qquad + \lambda^{-1} (\Delta P) \nonumber.
\end{eqnarray}

Note that $(P_\epsilon)^2 = P_\epsilon$, and evaluating the derivative of this equality at $\epsilon = 0$ yields $P (\Delta P)+ (\Delta P) P = (\Delta P)$. Then left- and right-multiplying both sides of this equality by $P$ yields $2 P (\Delta P) P = P (\Delta P) P$, so 
\[
P (\Delta P) P = 0.
\]
Observe that $(P B P)^\dagger =P(P B P)^\dagger P$, so there is significant cancellation in the second term of \eqref{eqn:GderivTerm}, which becomes
\[
(\Delta P) (P B P)^\dagger + \mathrm{h.c.}
\]
Then substituting into \eqref{eqn:GderivTerm}, we obtain
\begin{eqnarray*}
G'(0) & = & -\left[ (P B P)^\dagger + \lambda^{-1} (I-P) \right]
\left[ P B (\Delta P) + \mathrm{h.c.}   \right]
\left[ (P B P)^\dagger + \lambda^{-1} (I-P) \right] \nonumber \\
& & \qquad + [(\Delta P)(P B P)^\dagger  + \mathrm{h.c.}] + \lambda^{-1} (\Delta P) \nonumber.
\end{eqnarray*}
Now the preceding equality holds for \textit{any} $\lambda >0$. Thus taking the limit as $\lambda \rightarrow \infty$ establishes

\begin{eqnarray*}
G'(0) &= & \left[ -(P B P)^\dagger P B (\Delta P) (P B P)^\dagger
+(\Delta P) (P B P)^\dagger \right] + \mathrm{h.c.} \\
& = & \left(I - (P B P)^\dagger B \right) (\Delta P) (P B P)^\dagger + \mathrm{h.c.} 
\end{eqnarray*}
Then 
\[
D \wt{B}_P[\Delta P] = B G'(0) B = \left(B - \wt{B}[P] \right) (\Delta P) (P B P)^\dagger B + \mathrm{h.c.},
\]
as was to be shown. $\square$

\section{Proof of Lemma \ref{lem:invariantSubspace}}\label{appendix:sub-projector}

\proofof{Lemma \ref{lem:invariantSubspace}}
To ease the notation, let $B_\delta = B_\delta(0)$ and $T = DF(0) \in
\mathbb{R}^{k\times k}$. Let $Q_i$ be the orthogonal projector onto
$E_i$, and define $\rho(x) = \Vert Q_1 x \Vert_2$. Throughout the proof we
will use the shorthand notation $x=(x_1,x_2)$, e.g. $Q_1 x = (x_1,0)$ and
$\rho(x) = \Vert x_1 \Vert_2$.

Fix $\epsilon>0$ small enough such that $0 \prec T + \epsilon \prec 1$ and $0 \prec T\vert_{E_1} + \epsilon \prec \alpha$. Choose $\delta' \in (0,\delta)$ small enough so that $\Vert DF - T \Vert_2 \leq \epsilon$ on $B_{\delta'}$. Then $\Vert DF \Vert _2 \leq 1$ on $B_{\delta'}$, from which it follows that $F$ is non-expansive on $B_{\delta'}$, hence maps $B_{\delta'}$ into itself.

Then for the proof, we want to show that $\rho(F^k (x)) \leq \alpha^k \rho(x) $ for all $x\in B_{\delta'}$. For this it suffices to show that $\rho(F(x)) \leq \alpha \rho(x)$ for all $x\in B_{\delta'}$.

Define $F_1:B_\delta \rightarrow \mathbb{R}^r$ and $F_2:B_\delta \rightarrow \mathbb{R}^{p-r}$ by $F = (F_1,F_2)$. Define $T_1,T_2$ similarly.
Our choice of $\delta'$ guarantees that $\Vert DF_1 - T_1 \Vert_2 \leq \epsilon $ on $B_{\delta'}$. Now since $T$ is diagonal, $\Vert T_1 \Vert_2 = \Vert Q_1 T Q_1\Vert_2 \leq \alpha - \epsilon$, so in fact we have $\Vert DF_1 \Vert_2 \leq \alpha$ on $B_{\delta'}$. 

Next observe that since $E_2$ is invariant, we have that $F_1(E_2 \cap B_\delta) = 0$. Then for $x\in B_{\delta'}$,
\begin{eqnarray*}
\rho(F(x)) = \Vert F_1(x)\Vert_2 &=& \Vert F_1(x_1,x_2) - F_1(0,x_2) \Vert_2 \\
&=& \left\Vert \int_{0}^1 DF_1(tx_1,x_2)\cdot x_1 \,\ud t \right\Vert_2 \\
&\leq & \int_{0}^1 \Vert DF_1(tx_1,x_2)\Vert_2 \Vert x_1\Vert_2 \,\ud t \leq \alpha \rho(x),
\end{eqnarray*}
as desired. $\square$

\section{Proof of Proposition \ref{prop:saddlePoint}}\label{appendix:saddlePoint}

First we state a helpful lemma.

\begin{lemma}\label{lem:derivatives}
Let $Q=Q(t)$ be a differentiable density matrix-valued function of a
single variable. Then (omitting dependence on $t$ from the notation)
\[
(\wt{B}[Q])' = (B-\wt{B}[Q])Q'B^{-1} \wt{B}[Q] + \mathrm{h.c.}
\]
\end{lemma}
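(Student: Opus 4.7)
The plan is to derive this as a straightforward corollary of Lemma~\ref{lem:fB}, using the chain rule together with the explicit identity $\wt{B}[Q] = B(QBQ)^\dagger B$ established earlier in \eqref{eqn:Bproj}. First I would set $P = Q(t)$ and $\Delta P = Q'(t)$, and observe that the differentiability of $Q$ together with the smoothness of $\wt{B}$ as a function of the density matrix (Remark~\ref{rem:compressionContinuity}) allows me to apply the chain rule to obtain
\[
(\wt{B}[Q])' = D\wt{B}_{Q}[Q'] = (B - \wt{B}[Q])\, Q'\, (QBQ)^\dagger B + \mathrm{h.c.}
\]

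Next I would simplify the factor $(QBQ)^\dagger B$ by appealing to the definition $\wt{B}[Q] = B(QBQ)^\dagger B$ from \eqref{eqn:Bproj}. Since we are working under the standing assumption $B \prec 0$, the matrix $B$ is invertible, so we may left-multiply by $B^{-1}$ to obtain
\[
(QBQ)^\dagger B = B^{-1} \wt{B}[Q].
\]
Substituting this back into the expression for $(\wt{B}[Q])'$ gives exactly
\[
(\wt{B}[Q])' = (B - \wt{B}[Q])\, Q'\, B^{-1} \wt{B}[Q] + \mathrm{h.c.},
\]
as claimed.

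There is no substantive obstacle here; the argument is essentially a repackaging of Lemma~\ref{lem:fB} in a form more convenient for the downstream computations in Appendix~\ref{appendix:saddlePoint}. The only point worth flagging is that the rewriting $(QBQ)^\dagger B = B^{-1}\wt{B}[Q]$ genuinely relies on the invertibility of $B$; without the definiteness assumption, the identity would have to be phrased more carefully in terms of the pseudoinverse on the range of $Q$. Since Lemma~\ref{lem:derivatives} is only invoked in the negative-definite setting (the $t$-shifted construction of Section~\ref{sec:definiteReduction} having reduced the general case to this one), no additional care is needed.
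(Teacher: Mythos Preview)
Your proof is correct and matches the paper's approach: the paper simply states that the proof is ``a recapitulation of the argument in Lemma~\ref{lem:fB},'' and you have made this explicit by invoking Lemma~\ref{lem:fB} via the chain rule and then simplifying $(QBQ)^\dagger B = B^{-1}\wt{B}[Q]$ using the invertibility of $B$. There is nothing to add.
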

\begin{proof}
The proof is just a recapitulation of the argument in Lemma \ref{lem:fB}.
\end{proof}

Now we prove Proposition \ref{prop:saddlePoint}.

\proofof{Proposition \ref{prop:saddlePoint}}
Compute (omitting dependence on $t$ from the notation):
\begin{eqnarray*}
(F(Q))' & = & \mathrm{Tr} \big[ (\mf{F}(Q))' \left(A+\wt{B}[Q]\right) \mf{F}(Q) +\mathrm{h.c.} \big] + \mathrm{Tr} \big[ \mf{F}(Q) \left(\wt{B}[Q]\right)' \mf{F}(Q)   \big] \\ 
& = & 2\cdot\mathrm{Tr} \big[ (\mf{F}(Q))' \left(A+\wt{B}[Q]\right) \mf{F}(Q) \big] + 2\cdot \mathrm{Tr} \big[ \mf{F}(Q) (B-\wt{B}[Q])Q' \mf{F}(Q)   \big].
\end{eqnarray*}
We have used Lemma \ref{lem:derivatives}, together with the fact that $\wt{B}[Q]$ agrees with $B$ on $\mathrm{Im}(\mf{F}(Q))$. The first term in the last expression turns out to be zero. (This is essentially the content of the Hellmann-Feynman theorem.) We verify this presently.

For $i=1,\ldots,N$, let $u_i = u_i(t)$ be orthonormal such that $u_1,\ldots,u_n$ forms a basis for $\mathrm{Im}(Q(t))$. Then 
\begin{eqnarray*}
\mathrm{Tr} \big[ (\mf{F}(Q))' \left(A+\wt{B}[Q]\right) \mf{F}(Q)  \big] & = & \sum_{i=1}^N u_i^* (\mf{F}(Q))' \left(A+\wt{B}[Q]\right) \mf{F}(Q) u_i \\
& = & \sum_{i=1}^n u_i^* (\mf{F}(Q))' \left(A+\wt{B}[Q]\right) u_i.
\end{eqnarray*}
Now $\mathrm{Im}(\mf{F}(Q))$ is an invariant subspace for $\left(A+\wt{B}[Q]\right)$, so for $i=1,\ldots,n$, $\left(A+\wt{B}[Q]\right) u_i$ is an element of $\mathrm{Im}(\mf{F}(Q))$. Therefore
\[
\mathrm{Tr} \big[ (\mf{F}(Q))' \left(A+\wt{B}[Q]\right) \mf{F}(Q)  \big] = \sum_{i=1}^n u_i^*\, \mf{F}(Q) (\mf{F}(Q))' \mf{F}(Q) \left(A+\wt{B}[Q]\right) u_i.
\]
Note that
\[
(\mf{F}(Q))'=(\mf{F}(Q)\mf{F}(Q))'=\mf{F}(Q)(\mf{F}(Q))' + (\mf{F}(Q))'\mf{F}(Q).
\]
Multiply both sides by $\mf{F}(Q)$ and rearrange the terms, we have
\[
\mf{F}(Q) (\mf{F}(Q))' \mf{F}(Q) = 0.
\]
Therefore
\[
\mathrm{Tr} \big[ (\mf{F}(Q))' \left(A+\wt{B}[Q]\right) \mf{F}(Q)  \big] = 0
\]
as claimed and 
\begin{equation}\label{eqn:Ffirstderiv}
(F(Q))' = 2\cdot \mathrm{Tr} \big[ \mf{F}(Q)\, (B-\wt{B}[Q])\,Q'\, \mf{F}(Q)   \big].
\end{equation}
Define $\Delta P := Q'(0)$ and evaluate at $t=0$. Note that
$\mf{F}(Q(0))=P_f$, we obtain
\[
(F(Q))'(0) = 2\cdot \mathrm{Tr} \big[ P_f\, (B-\wt{B}[P_f])\,(\Delta P)\, P_f   \big].
\]
But $P_f (B-\wt{B}[P_f]) =0$, so  $(F(Q))'(0) = 0$, as desired.

Next take another derivative of \eqref{eqn:Ffirstderiv} and evaluate at $t=0$ to find
\begin{eqnarray*}
(F(Q))''(0) & =&  2\cdot \mathrm{Tr} \big[ D\mf{F}_{P_f}[\Delta P]
\, (B-\wt{B}[P_f])\,(\Delta P)\, P_f   \big] \\
& & \quad + \ \  2\cdot \mathrm{Tr} \big[ P_f\, (B-\wt{B}[P_f])\,(\Delta P)\, D\mf{F}_{P_f}[\Delta P] \big] \\
& & \quad - \ \ 2\cdot \mathrm{Tr} \big[ P_f\, (\wt{B}[Q])'(0)\,(\Delta P)\, P_f   \big] + 2\cdot \mathrm{Tr} \big[ P_f\, (B-\wt{B}[P_f])\,Q''(0)\, P_f   \big].
\end{eqnarray*}
Since $P_f (B-\wt{B}[P_f]) =0$, the second and final terms vanish. Substituting in for $(\wt{B}[Q])'(0)$ via Lemma \ref{lem:derivatives} (and again using the facts $P_f (B-\wt{B}[P_f]) =0$ and $B^{-1}\wt{B}[P_f]P_f = P_f$), we obtain
\begin{eqnarray*}
(F(Q))''(0) & =&  2\cdot \mathrm{Tr} \big[ D\mf{F}_{P_f}[\Delta P]
\, (B-\wt{B}[P_f])\,(\Delta P)\, P_f   \big] \\
& & \quad + \ \ 2\cdot \mathrm{Tr} \big[ P_f\, (\Delta P) \,(\wt{B}[P_f] - B) \,(\Delta P)\, P_f   \big].
\end{eqnarray*}

For the rest of the proof, $u_i$ will always indicate $u_i(0)$.
Now we substitute in for $D\mf{F}_{P_f}[\Delta P]$ via Lemma \ref{lem:composedJacobianGeneral}. Since $(B-\wt{B}[P_f])u_i =0$ for $i=1,\ldots,n$, only the ``$\mathrm{h.c.}$'' term survives, yielding
\begin{eqnarray}
&& (F(Q))''(0) \\
&& \quad =\ \   2 \sum_{i=1}^n  \mathrm{Tr} \left[ u_i u_i^*\,(\Delta P)\, \big(Z_i^{P_f}\big)^*\, (B-\wt{B}[P_f])\,(\Delta P)\, P_f   \right] \nonumber\\
& & \qquad\qquad + \ \ 2\cdot \mathrm{Tr} \big[ P_f\, (\Delta P) \,(\wt{B}[P_f] - B) \,(\Delta P)\, P_f   \big] \nonumber\\
&& \quad = \ \  2 \sum_{i=1}^n u_i^*\,(\Delta P)\, (\wt{B}[P_f]-B)\, \left[P_f^\perp \left(A+\wt{B}[P_f] - \mu_i \right) P_f^\perp \right]^\dagger \, (B-\wt{B}[P_f])\,(\Delta P)\, u_i \nonumber \\
& & \qquad\qquad + \ \ 2 \sum_{i=1}^n u_i^* \,(\Delta P)\,(\wt{B}[P_f] - B) \,(\Delta P)\, u_i. \nonumber
\end{eqnarray}
Let $X$ be an eigenvector of $D\mf{H}_0^{P_f}$ with corresponding
eigenvalue $\sigma$, viewed as an element of $\mathbb{C}^{(N-n)\times
n}$, so $X=(0,\ldots,0,X_j,0,\ldots,0)$, where $X_j$ is an eigenvector
of $J_j^{P_f}$. Fix the path $Q(t) = \Phi(tX)$, then 
\[
(\Delta P) u_i = \delta_{ij} \left(\begin{array}{c}
0 \\
X_j
\end{array}\right),
\]
and
\begin{eqnarray*}
(F(Q))''(0) &=& \ -2 X_j^*\, (\wt{B}[P_f]-B)_{22}\,J_j^{P_f}\,X_j \ +\  2 X_j^*\, (\wt{B}[P_f]-B)_{22}\,X_j \\
& = &\  2(1-\sigma) X_j^*\, (\wt{B}[P_f]-B)_{22}\,X_j.
\end{eqnarray*}
Since $(\wt{B}[P_f]-B)_{22} \succ 0$, we have $X_j^*\, (\wt{B}[P_f]-B)_{22}\,X_j > 0$, and therefore the sign of $(F(Q))''(0)$ is the sign of $1-\sigma$. 
The proposition is proved by recalling Lemma~\ref{lem:eigenJGeneral}.
$\square$

\section{Proof of the egg on barn lemma}\label{sec:eggOnBarn}

This section is devoted to the proof of Lemma \ref{lem:preimage}, which we break into several pieces.

First we outline some notation that will allow us to treat the Hermitian
and real-symmetric cases jointly. Fix $K\in \{\mathbb{C},\mathbb{R}\}$.
Let $\mathbf{K}_N$ denote $\mathbf{H}_N$ if $K=\mathbb{C}$ and
$\mathbf{S}_N$ if $K=\mathbb{R}$. Let $\mathbf{E}_N \subset
\mathbf{K}_N$ denote the elements of $\mathbf{K}_N$ with no repeated eigenvalues.

Fix some $B\in \mathbf{K}_N$ for the remainder of the section. We equip
$\mathbf{K}_N$ with the Lebesgue measure, so statements about, e.g.,
`almost every' $A$ in $\mathbf{K}_N$ should be understood with respect
to this measure. Meanwhile, we equip $\mathcal{D}_K$ with the natural
notion of `measure zero' inherited from the Lebesgue measure on charts,
which coincides with that of its volume measure induced by any choice
Riemannian metric.

Let $\Phi : \mathcal{D}_K \rightarrow \mathbf{K}_N$ denote the map $Q \mapsto A+\wt{B}[Q]$, and let $\Psi : \mathbf{E}_N \rightarrow \mathcal{D}_K$ denote the map that sends a matrix in $\mathbf{E}_N$ to its density matrix in $\mathcal{D}_K$. (Note that the choice of density matrix is unambiguous when there are no repeated eigenvalues.) We would like to say that $\Phi^{-1}(\mathbf{E}_N)$ is a large (i.e., full-measure) subset of $\mathcal{D}_K$, so that we can define $\Psi \circ \Phi$ (which coincides with $\mf{F}$) on this set. This is quite essential to the argument. Indeed, if this were not the case, then there would be a set of positive measure in $\mathcal{D}_K$ on which the behavior of $\mf{F}$ was not canonically determined, much less differentiable. Fortunately, we have the following lemma, which says even more.

\begin{lemma}\label{lem:largeOpen}
For almost every choice of $A$ in $\mathbf{K}_N$, $\mathcal{W}:= \Phi^{-1}(\mathbf{E}_N)$ is a connected open subset of full measure in $\mathcal{D}_K$.
\end{lemma}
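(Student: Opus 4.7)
The plan is to prove the three required properties of $\mathcal{W}$—openness, full measure, and connectedness—separately. The geometric fact powering the argument is that $\mathbf{E}_N^c \subset \mathbf{K}_N$ is a closed algebraic subset stratifying as a finite disjoint union $\mathbf{E}_N^c = \bigsqcup_{i=1}^L M_i$ of smooth submanifolds of real codimension at least $2$ (codimension $3$ in the Hermitian case and $2$ in the real-symmetric case, by the von Neumann--Wigner classification of eigenvalue crossings).

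Openness of $\mathcal{W}$ is immediate: by Remark \ref{rem:compressionContinuity} the map $\Phi : Q \mapsto A + \wt{B}[Q]$ is smooth, and $\mathbf{E}_N$ is open in $\mathbf{K}_N$ since its complement is the zero set of the discriminant of the characteristic polynomial. For full measure, I would apply Fubini to the joint map $\widetilde{\Phi} : \mathbf{K}_N \times \mathcal{D}_K \to \mathbf{K}_N$ defined by $(A,Q) \mapsto A + \wt{B}[Q]$. For each fixed $Q$, the set $\{A : \widetilde{\Phi}(A,Q) \in \mathbf{E}_N^c\}$ is a translate of $\mathbf{E}_N^c$ and hence has Lebesgue measure zero in $\mathbf{K}_N$; Fubini gives that $\widetilde{\Phi}^{-1}(\mathbf{E}_N^c)$ has measure zero in the product, and applying Fubini in the opposite direction yields that the slice $\Phi^{-1}(\mathbf{E}_N^c)$ has measure zero in $\mathcal{D}_K$ for almost every $A \in \mathbf{K}_N$.

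Connectedness is the main obstacle, and for it I would invoke parametric transversality. Since $\widetilde{\Phi}$ is a submersion (the map $A \mapsto \widetilde{\Phi}(A,Q)$ is a diffeomorphism for each fixed $Q$), it is automatically transverse to each stratum $M_i$. The parametric transversality theorem then guarantees that for all $A$ outside a measure-zero set, the fiber map $\Phi$ is transverse to every $M_i$ simultaneously. For such $A$, each $\Phi^{-1}(M_i)$ is either empty or a smooth submanifold of $\mathcal{D}_K$ of codimension at least $2$, so $\Phi^{-1}(\mathbf{E}_N^c)$ is a finite union of such submanifolds. Since $\mathcal{D}_K \cong \mathbf{Gr}(n,K^N)$ is a connected smooth manifold, removing a finite union of submanifolds of codimension at least $2$ leaves a connected set: any two points of $\mathcal{W}$ may be joined by a smooth path in $\mathcal{D}_K$, which can be perturbed generically (transverse to each stratum) to miss the removed pieces. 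Taking the union of the two exceptional measure-zero sets of $A$ produced above yields a single measure-zero set in $\mathbf{K}_N$ outside of which $\mathcal{W}$ simultaneously enjoys all three claimed properties.
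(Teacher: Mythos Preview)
Your proof is correct and follows essentially the same architecture as the paper's: openness from continuity of $\Phi$ and openness of $\mathbf{E}_N$; the stratification of $\mathbf{E}_N^c$ into finitely many smooth submanifolds of real codimension $\geq 2$; parametric transversality to conclude that for a.e.\ $A$ the preimage $\Phi^{-1}(\mathbf{E}_N^c)$ is a finite union of codimension-$\geq 2$ submanifolds; and a path-perturbation argument for connectedness of the complement.

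The one difference is your separate Fubini argument for full measure. The paper instead reads full measure directly off the transversality conclusion (codimension-$\geq 2$ submanifolds have measure zero), so once you invoke parametric transversality for connectedness anyway, your Fubini step is redundant. That said, your Fubini argument is correct and more elementary on its own---it does not require the stratification or any transversality, only that $\mathbf{E}_N^c$ has Lebesgue measure zero in $\mathbf{K}_N$---so it would be the cleaner route if full measure were all that was needed.
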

\begin{proof}
The openness of $\mathcal{W}$ follows from the fact that $\Phi$ is a
continuous map $\mathcal{D}_K \rightarrow \mathbf{K}_N$ and that
$\mathbf{E}_N$ is open in $\mathbf{K}_N$.

Next note that a Hermitian (in particular, real-symmetric) matrix $X$
has repeated eigenvalues if and only if the discriminant of the
characteristic polynomial of $X$ is zero. This is a real-algebraic
condition on the entries of $X$ (with the real and complex parts treated
separately in the case $K=\mathbb{C}$), so $\mathbf{K}_N \backslash
\mathbf{E}_N$ is a real algebraic subset of the real vector space
$\mathbf{K}_N$. In fact (see Section 1.3 of \cite{TaoRMT}),
$\mathbf{K}_N \backslash \mathbf{E}_N$ has real codimension 3 in
$\mathbf{K}_N$ if $K=\mathbb{C}$ and real codimension 2 in
$\mathbf{K}_N$ if $K=\mathbb{R}$. Thus (since $\mathbf{K}_N \backslash
\mathbf{E}_N$ is a real algebraic set), in either case $\mathbf{K}_N
\backslash \mathbf{E}_N$ can be written as a (disjoint) union of
finitely many smooth submanifolds $M_1,\ldots,M_k$ of $\mathbf{K}_N$,
each of real codimension at least $2$ in $\mathbf{K}_N$.

Ideally, this should indicate that $\Phi^{-1}(\mathbf{K}_N \backslash \mathbf{E}_N)$ is a union of finitely many smooth submanifolds of $\mathcal{D}_K$, each of real codimension at least $2$. Indeed, we have by the Transversality Theorem (see, e.g., Section 2.3 of \cite{GuilleminPollack}) that for almost every $A\in \mathbf{K}_N$, the map $\Phi$ is transversal to $M_i$ for each $i=1,\ldots,k$. Then by the preimage theorem for transversal maps (see, e.g., Section 1.4 of \cite{GuilleminPollack}), $\Phi^{-1}(M_i)$ is a submanifold of $\mathcal{D}_K$ with (real) codimension in $\mathcal{D}_K$ equal to the codimension of $M_i$ in $\mathbf{K}_N$, which is at least $2$.

Thus $\mathcal{W}=\Phi^{-1}(\mathbf{E}_N)$ is equal to $\mathcal{D_K}$
minus a finite number of submanifolds of codimension at least $2$. These submanifolds have zero measure in $\mathcal{D}_K$ (this follows from Sard's theorem; refer, e.g., to \cite{GuilleminPollack}), so $\mathcal{W}$ has full measure in $\mathcal{D}_K$.

It only remains to show that $\mathcal{W}$ is connected. Since $\mathcal{D_K}$ is connected, this follows from the general fact that if $Y$ is a connected (hence smoothly path-connected) manifold and $Y_1,\ldots,Y_k$ are submanifolds with codimension at least 2 in $Y$, then $Y\backslash \bigcup_{i} Y_i$ is connected.

This general fact also follows from a transversality argument, which we
now provide for completeness. Let $x,y\in Y$, and let
$\gamma:[0,1]\rightarrow Y$ be a smooth path with $\gamma(0)=x$ and
$\gamma(1)=y$. But there is a homotopy of maps $\gamma_{\epsilon}$ (with
$\gamma_0 = \gamma$) such that $\gamma_{\epsilon}$ is transversal to each
of the $Y_i$ for a.e. $\epsilon$ (see, e.g., the proof of the
``transversality homotopy theorem'' of Section 2.3 of \cite{GuilleminPollack}). Since the $Y_i$ have codimension $2$, this implies that $\gamma_\epsilon$ does not intersect any of the $Y_i$ (for a.e. $\epsilon$). Taking $\epsilon$ sufficiently small so that $x=\gamma(0)$ and $y=\gamma(1)$ are connected to $\gamma_\epsilon(0)$ and $\gamma_\epsilon(1)$, respectively, by paths within $Y\backslash \bigcup_{i} Y_i$, we see that $x$ and $y$ are connected by a path within $Y\backslash \bigcup_{i} Y_i$.) Also, we know that $\Phi^{-1}(\mathbf{E}_N)$ is open in $\mathcal{D}_K$ because $\mathbf{E}_N$ is open in $\mathbf{K}_N$.
\end{proof}

We now outline the main pieces remaining in the proof of Lemma
\ref{lem:preimage}. Recall that $\mathcal{D}_K$ is a real-analytic
submanifold\footnote{$\mathcal{D}_K$ can be identified with the
Grassmannian $\mathbf{Gr}(n,K^N)$, i.e., the set of all $n$-dimensional
subspaces of $K^N$, which is an algebraic variety of $K$-dimension
$(N-n)n$. The space $\mathcal{D}_K$ itself is cut out by the conditions
$Q^2 = Q$, $Q^* = Q$, and $\mathrm{Tr}(Q) = n$ on $Q\in K^{N\times N}$.
These are real algebraic conditions on $K^{N\times N} \simeq
\mathbb{R}^{m}$ (for some $m$), so $\mathcal{D}_K$ is a (smooth) real
algebraic subvariety of $\mathbb{R}^{m}$. In particular, $\mathcal{D}_K$
has the structure of a real-analytic manifold.} of $\mathbb{R}^m$ for
some $m$. We claim that $\Phi$ is real-analytic on $\mathcal{D}_K$ and
that $\Psi$ is real-analytic on $\mathbf{E}_N$. This would imply that
$\mf{F}$ is a real-analytic map $\mathcal{W} \rightarrow \mathcal{D}_K$.
In particular, by an analytic continuation argument
(Lemma~\ref{lem:generalAnalyticFact}), the set
$\mathcal{W}'$ of points in $\mathcal{W}$ at which the Jacobian of
$\mf{F}$ fails to be invertible must either be all of $\mathcal{W}$ or
have zero measure in $\mathcal{D}_K$. The former possibility can be
ruled out.

Then in words, $\mf{F}$ is a local diffeomorphism on an open set of full measure in $\mathcal{D}_K$. Diffeomorphisms preserve measure zero sets, and by covering $\mathcal{W}'$ with countably many small open sets on which $\mf{F}$ is a diffeomorphism, we will see that the preimage of a measure zero set under $\mf{F}$ must have measure zero.

First we turn to establishing the claimed real-analyticity.

\begin{lemma}\label{lem:analytic}
$\mf{F}\vert_{\mathcal{W}}: \mathcal{W} \rightarrow \mathcal{D}_K$ is a real-analytic map between real-analytic manifolds.
\end{lemma}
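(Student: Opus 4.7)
The plan is to factor $\mf{F}\vert_{\mathcal{W}} = \Psi \circ \Phi$ and verify that each factor is real-analytic on the relevant domain; the lemma then follows from the fact that compositions of real-analytic maps between real-analytic manifolds are real-analytic. Since $\mathcal{D}_K$ is a real-analytic submanifold of the ambient real vector space $K^{N\times N}$ while $\mathbf{E}_N$ is an open subset of $\mathbf{K}_N$, in each case it will suffice to produce a real-analytic extension of the map to an open neighborhood in the ambient Euclidean space and then restrict.

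For $\Phi$, the closed-form expression from Remark~\ref{rem:compressionContinuity},
\[
\wt{B}[P] \;=\; B\bigl[PBP + (I-P)\bigr]^{-1} B \;-\; B(I-P) B,
\]
makes sense for any $X\in K^{N\times N}$ for which $XBX + (I-X)$ is invertible. At any density matrix $P$ this holds: in a basis adapted to $\mathrm{Im}(P)\oplus\mathrm{Im}(P)^\perp$ the bracketed matrix is block-diagonal with blocks $V^*BV\prec 0$ and $I$. Invertibility is an open condition and matrix inversion is real-analytic on the set of invertible matrices, so the formula defines a real-analytic $K^{N\times N}$-valued map on an open neighborhood of $\mathcal{D}_K$ in $K^{N\times N}$. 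Adding the constant $A$ and restricting to $\mathcal{D}_K$ gives real-analyticity of $\Phi$.

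For $\Psi$, the natural tool is the Riesz contour-integral formula for the spectral projector. Fix $M_0\in\mathbf{E}_N$ with eigenvalues $\mu_1<\cdots<\mu_N$ and choose a simple closed contour $\mc{C}\subset\mathbb{C}$ enclosing $\mu_1,\ldots,\mu_n$ but disjoint from $\mu_{n+1},\ldots,\mu_N$. Continuity of the spectrum then provides an open neighborhood $\mc{U}\subset\mathbf{K}_N$ of $M_0$ on which $\mc{C}$ remains disjoint from $\sigma(M)$ and still separates the lowest $n$ eigenvalues from the rest, so that
\[
\Psi(M) \;=\; \frac{1}{2\pi\I}\oint_{\mc{C}} (z-M)^{-1}\,\ud z
\]
for every $M\in\mc{U}$. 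For each fixed $z\in\mc{C}$ the resolvent $M\mapsto(z-M)^{-1}$ is a rational, hence real-analytic, function of the entries of $M$ on $\mc{U}$, and this analyticity is uniform in $z$ along the compact contour, so standard theorems on integrals of analytic families yield real-analyticity of $\Psi$ on $\mc{U}$. Since $M_0$ was arbitrary, $\Psi$ is real-analytic on all of $\mathbf{E}_N$.

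The composition $\Psi\circ\Phi$ is defined precisely on $\mathcal{W} = \Phi^{-1}(\mathbf{E}_N)$ and coincides there with $\mf{F}$, completing the proof. The hard part is essentially nothing more than locating the right formulas to extend each factor off of its source manifold---the extension of $\wt{B}$ provided by Remark~\ref{rem:compressionContinuity} and the holomorphic functional calculus for $\Psi$---and noting that each extension remains defined on a full open neighborhood of the source submanifold. No deep difficulty is expected.
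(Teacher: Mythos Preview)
Your proposal is correct and follows essentially the same strategy as the paper: factor $\mf{F}=\Psi\circ\Phi$, exhibit a real-analytic extension of each factor to an open neighborhood in the ambient Euclidean space (the pseudoinverse-free formula for $\wt{B}$ and the Riesz contour integral, respectively), and compose. The only cosmetic difference is that the paper symmetrizes the extension of $\Phi$ as $\tfrac{1}{2}\bigl(G(Q)+G(Q^*)\bigr)$ so that the extended map is manifestly $\mathbf{K}_N$-valued, whereas you take the unsymmetrized extension into $K^{N\times N}$ and rely on the fact that the restriction to $\mathcal{D}_K$ lands in $\mathbf{K}_N$; both are equally valid since $\mathbf{K}_N$ is a linear subspace.
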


\begin{proof}
For $Q \in \mathcal{D}_K$ (so in particular $Q=Q^*$), we can write 
\begin{eqnarray*}
\Phi(Q) &=& A+B(QBQ)^\dagger B \\ 
&=& A+ \frac{1}{2} B  \left[\big[QBQ+(I-Q)\big]^{-1} - (I-Q)\right] B \\
&& \quad\quad  +\ \frac{1}{2} B \left[\big[Q^*BQ^*+(I-Q^*)\big]^{-1} - (I-Q^*)\right] B.
\end{eqnarray*}
Written in the latter form, it is clear that $\Phi$ extends to a real-analytic map to $\mathbf{E}_N$, defined on a neighborhood of $\mathcal{D}_K$ in $K^{N\times N}$ (considered, in either case for $K$, as a real coordinate space $\mathbb{R}^q$ for some $q$).

Consider $X_0 \in \mathbf{E}_N$, and let $\mathcal{C}$ be a simple contour in the complex plane surrounding only the lowest $n$ eigenvalues of $X$. The for all $X$ in a sufficiently small neighborhood of $X_0$ in $\mathbf{E}_N$, we have
\[
\Psi(X) = \frac{1}{2\pi\I} \oint_{\mc{C}} (z-X)^{-1} \ud z,
\]
where $\mathcal{C}$ is a simple contour in the complex plane surrounding only the lowest $n$ eigenvalues of $X$. In particular, we can choose $\mathcal{C}$ to be a circle of some radius $R>0$, so taking the parametrization $z(t) = R\cos(t) + iR\sin(t)$ yields
\[
\Psi(X) = \int_0^{2\pi} \underbrace{\frac{R}{2\pi\I} (R\cos(t) + i\sin(t) -X)^{-1}}_{=:\,G(X,t)\,} \ud t.
\]
Identifying the real vector space $\mathbf{K}_N$ 
with $\mathbb{R}^p$ for some $p$, we have that $G(X,t)$ is a rational function $\mathbb{R}^{m+1} \rightarrow \mathbb{C}^{N\times N}$, well-defined for all $X$ in a neighborhood of $X_0$, hence real-analytic (if we identify the target space with $\mathbb{R}^{2N^2}$). Since an integral of a real-analytic function with respect to one of its arguments is real-analytic (see Proposition 2.2.3 of \cite{KrantzParks}), we have established that $\Psi$ is a real-analytic function $\mathbf{E}_N \rightarrow \mathcal{D}_K$, where we can interpret the domain as sitting inside some $\mathbb{R}^p$ and the target as sitting inside of $\mathbb{R}^m$, as mentioned above.

Since the composition of real-analytic functions is real-analytic (see Proposition 2.2.8 of \cite{KrantzParks}), we have established that $\mf{F}=\Psi \circ \Phi$ is real-analytic on a neighborhood of $\mathcal{W}\subset \mathcal{D}_K$ in $K^{N\times N}$. Since $\mathcal{W}$ is open in $\mathcal{D}_K$, $\mathcal{W}$ is a real-analytic submanifold of $\mathcal{D}_K$, and we can view $\mf{F}:\mathcal{W}\rightarrow \mathcal{D}_K$ as a real-analytic map between real-analytic manifolds (without thinking of their ambient spaces).
\end{proof}

Next we prove a general fact about real-analytic maps between
real-analytic manifolds. This is essentially an analytic continuation result.
\begin{lemma}\label{lem:generalAnalyticFact}
Suppose that $F:\mathcal{M}\rightarrow \mathcal{N}$ is a real-analytic
map between real-analytic manifolds of equal dimension $k$, and $\mathcal{M}$ is connected. Let $\mathcal{M}'$ be the closed subset of points $x\in \mathcal{M}$ at which the Jacobian $DF_x:T_x \mathcal{M} \rightarrow T_{F(x)} \mathcal{N}$ is singular. Then either $\mathcal{M} = \mathcal{M}'$ or $\mathcal{M}'$ has zero measure in $\mathcal{M}$.
\end{lemma}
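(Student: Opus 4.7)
The plan is to reduce the statement to the classical identity principle and measure-zero property for real-analytic scalar functions on $\mathbb{R}^k$. Fix any point $x \in \mathcal{M}$ and choose charts $(\mathcal{U},\phi)$ around $x$ with $\phi(\mathcal{U}) \subset \mathbb{R}^k$ connected and $(\mathcal{V},\psi)$ around $F(x)$ with $F(\mathcal{U}) \subset \mathcal{V}$. Then $\widetilde{F} := \psi \circ F \circ \phi^{-1}$ is a real-analytic map between open subsets of $\mathbb{R}^k$, and its Jacobian determinant $J_{\widetilde{F}}$ is a real-analytic scalar function on $\phi(\mathcal{U})$. Since chart transitions have non-vanishing Jacobian, a point $y \in \mathcal{U}$ lies in $\mathcal{M}'$ if and only if $J_{\widetilde{F}}(\phi(y)) = 0$; this is how I will test membership in $\mathcal{M}'$ locally.

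To promote this to a global dichotomy, define $S \subset \mathcal{M}$ to consist of those $x$ admitting an open neighborhood on which $DF$ is everywhere singular. Clearly $S$ is open. I would show $S$ is also closed: given $x \in \overline{S}$ and a chart $(\mathcal{U},\phi)$ around $x$ with $\phi(\mathcal{U})$ connected, pick $y \in S \cap \mathcal{U}$; by definition of $S$, $J_{\widetilde{F}}$ vanishes on an open subset of $\phi(\mathcal{U})$. The identity principle for real-analytic functions on a connected open subset of $\mathbb{R}^k$ then forces $J_{\widetilde{F}} \equiv 0$ on $\phi(\mathcal{U})$, so $\mathcal{U} \subset S$ and $x \in S$. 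Connectedness of $\mathcal{M}$ then yields $S = \mathcal{M}$ or $S = \emptyset$.

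In the first case, $\mathcal{M}' = \mathcal{M}$. In the second case, for every chart $(\mathcal{U},\phi)$ as above, $J_{\widetilde{F}}$ is a nonzero real-analytic function on the connected open set $\phi(\mathcal{U})$, and I would invoke the standard fact that the zero set of such a function has Lebesgue measure zero in $\mathbb{R}^k$ (see, e.g., Proposition 0 of B.\ Mityagin, ``The zero set of a real analytic function'', or Krantz--Parks). Hence $\mathcal{M}' \cap \mathcal{U}$ has measure zero in $\mathcal{U}$. Using second countability of $\mathcal{M}$, I cover $\mathcal{M}$ by countably many such chart domains; countable additivity then gives that $\mathcal{M}'$ has measure zero in $\mathcal{M}$.

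The argument is essentially a sheaf-theoretic packaging of the identity principle, so there is no serious analytic obstacle. The only subtlety worth flagging is that the Jacobian determinant itself is chart-dependent, but the \emph{vanishing locus} is not, since chart transitions are diffeomorphisms with nowhere-vanishing Jacobians; this is what makes the local-to-global step go through cleanly. I would also note that the dimension equality $\dim \mathcal{M} = \dim \mathcal{N} = k$ is used only to produce a \emph{scalar} Jacobian determinant and is not essential to the broader spirit of the result.
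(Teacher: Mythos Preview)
Your proof is correct and follows essentially the same approach as the paper: both arguments localize the singular set $\mathcal{M}'$ as the zero set of the real-analytic Jacobian determinant in charts, define the set of points admitting a neighborhood on which $DF$ is singular (your $S$ is the paper's $\mathcal{A}$), show it is open and closed by the identity principle, and conclude via connectedness and the measure-zero property of zero sets of non-vanishing real-analytic functions. Your presentation is slightly cleaner in that you state the dichotomy on $S$ directly rather than assuming $\mathcal{M}'$ has positive measure first, and you correctly flag second countability for the countable-cover step, which the paper leaves implicit.
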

\begin{proof}
In a local coordinate chart, the defining condition for $\mathcal{M}'$
is precisely that the determinant of the $k\times k$ Jacobian matrix in
local coordinates (whose entries are real-analytic functions of the
local coordinates) is zero. This set is a real-analytic function of local coordinates. The zero set of a real-analytic function on a connected open subset of $\mathbb{R}^k$ is either the whole set or a set of measure zero (in fact, by a much deeper result of Lojasiewicz, a finite union of analytic submanifolds of codimension at least $1$---see Theorem 6.3.3 of \cite{KrantzParks}).

Suppose that the measure of $\mathcal{M}'$ is not zero, so $\mathcal{M}'$ must have positive measure in some coordinate chart, and by the preceding $\mathcal{M}'$ must contain some open set in this chart.
Note that the set $\mathcal{A}:=\{x\in \mathcal{M} \,:\,DF_x\,\textrm{is
singular on a neighborhood of }x\}$ is both open and closed in
$\mathcal{M}$. The openness follows immediately from the definition, while the closedness follows from the real-analyticity of $F$. To see the latter point, let $y$ be a limit point of $\mathcal{A}$, and let $(\mathcal{U},\varphi)$ be a coordinate chart near $y$ with $\varphi(y) = 0$. Let the determinant of the $k\times k$ Jacobian matrix $DF$ in
local coordinates be denoted by $f$, so $f$ is real-analytic, and moreover $f\equiv 0$ on $\varphi(\mathcal{U})$. Then all of the derivatives of $f$ are uniformly zero on $\varphi(\mathcal{U})$, hence also at the limit point $0=\varphi(y)$. Since $f$ is real-analytic at $0$, this implies that $f \equiv 0$ on a neighborhood of $0$, so $DF_x$ is singular on a neighborhood of $y$, i.e., $y\in\mathcal{A}$. This establishes that $\mathcal{A}$ is closed, as desired.

Since $\mathcal{M}$ is connected and $\mathcal{A}$ is both open and closed in $\mathcal{M}$, we must have either $\mathcal{A}=\emptyset$ or $\mathcal{A}=\mathcal{M}$.
Since $\mathcal{M}'$ contains an open set, 
$\mathcal{A}$ cannot be empty. Consequently when the measure of
$\mathcal{M}'$ is not zero, $\mathcal{A}=\mathcal{M}$ and $DF_x$ is singular for all $x$.  \end{proof}

In particular, Lemma \ref{lem:generalAnalyticFact} implies (together with Lemma \ref{lem:largeOpen}) that 
\[
\mathcal{W}' := \{ Q\in \mathcal{W} \,:\,D\mf{F}_Q\ \textrm{is singular}\}
\]
is either equal to $\mathcal{W}$ or has zero measure in $\mathcal{W}$. The next lemma says that we can rule out the former possibility.

\begin{lemma}
For almost every choice of $A$ in $\mathbf{K}_N$, $\mathcal{W}'$ has zero measure in $\mathcal{W}$, hence also (by Lemma \ref{lem:largeOpen}) zero measure in $\mathcal{D}_K$. It follows that $\mathcal{W}\backslash \mathcal{W}'$ is an open subset of full measure in $\mathcal{D}_K$.
\end{lemma}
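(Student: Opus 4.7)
The plan is to combine the analytic dichotomy of Lemma \ref{lem:generalAnalyticFact} with a single witness point at which the Jacobian $D\mf{F}$ is invertible. By Lemma \ref{lem:analytic}, $\mf{F}$ restricts to a real-analytic map $\mathcal{W}\to\mathcal{D}_K$ between real-analytic manifolds of equal dimension, and by Lemma \ref{lem:largeOpen}, for almost every $A$ the set $\mathcal{W}$ is a connected open subset of full measure in $\mathcal{D}_K$. Hence Lemma \ref{lem:generalAnalyticFact} applies and forces $\mathcal{W}'$ to be either all of $\mathcal{W}$ or of measure zero. To rule out the pathological alternative, it suffices to exhibit, for almost every $A$, a single $Q\in\mathcal{W}$ at which $D\mf{F}_Q$ is nonsingular; the final "open subset of full measure" claim then follows because $\mathcal{W}'$ is closed in the open set $\mathcal{W}$ (as the preimage of the closed set of singular operators under the continuous Jacobian), so $\mathcal{W}\setminus\mathcal{W}'$ is open, and it has full measure in $\mathcal{D}_K$ since both $\mathcal{W}$ has full measure in $\mathcal{D}_K$ and $\mathcal{W}'$ has measure zero in $\mathcal{W}$.

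The natural witness is the true density matrix $Q=P$ associated with $A+B$, which is well-defined whenever $A+B$ has a spectral gap (true for a.e. $A$, for instance under Genericity Assumption \ref{assump:distinctEigen}). Invertibility of $D\mf{F}_P$ is automatic from the structural results already proved: Remark \ref{remark:Zmatrix} displays $D\mf{F}_P$ as block-diagonal with blocks $J_1,\ldots,J_n$, and Lemma \ref{lem:eigenJ} gives $\sigma(J_i)\subset(0,1)$, so each $J_i$, hence also $D\mf{F}_P$, is invertible. The only remaining task is to check the membership $P\in\mathcal{W}$, i.e., that $A+\wt{B}[P]$ has all distinct eigenvalues.

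For this last step, recall from Section \ref{sec:properties} that in the eigenbasis $V_N$ of $A+B$, $A+\wt{B}[P]$ is block-diagonal with upper block $\mathrm{diag}(\lambda_1,\ldots,\lambda_n)$ and lower block $\mathrm{diag}(\lambda_{n+1},\ldots,\lambda_N)+\bigl(B_{12}^*B_{11}^{-1}B_{12}-B_{22}\bigr)$; by Lemma \ref{lem:orderzero} the eigenvalues of the lower block are all at least $\lambda_{n+1}>\lambda_n$, so distinctness of the full spectrum of $A+\wt{B}[P]$ reduces to (i) $\lambda_1,\ldots,\lambda_n$ being distinct and (ii) the lower block having $N-n$ distinct eigenvalues. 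Both are real-analytic conditions on $A$ over the open subset of $\mathbf{K}_N$ on which $A+B$ has a spectral gap (so that $V_N$ varies real-analytically in $A$, up to an irrelevant choice of phases), since the discriminant of the characteristic polynomial is a polynomial in the entries of the matrix. Each such condition therefore fails either on all of its domain or on a real-analytic subvariety of positive codimension, hence of measure zero. To rule out the former, it suffices to exhibit a single $A_0$ at which both hold; taking $A_0=\mathrm{diag}(a_1,\ldots,a_N)$ with $a_1<\cdots<a_N$ and $|a_i-a_j|\gg\|B\|_2$ works, since then $A_0+B$ is a small perturbation of $A_0$ with distinct eigenvalues close to the $a_i$, $V_N$ is close to the identity, and the lower block of $A_0+\wt{B}[P]$ is a small perturbation of $\mathrm{diag}(a_{n+1},\ldots,a_N)$, which has distinct eigenvalues.

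The main obstacle is the nontrivial coupling between $A$ and $\wt{B}[P]$: perturbing $A$ also perturbs $P$ through the eigenbasis of $A+B$, so one cannot simply treat $\wt{B}[P]$ as independent of $A$. Once this step is translated into the real-analyticity of an explicit scalar function of $A$ (the discriminant) and combined with one explicit nondegenerate $A_0$, the standard "not identically zero implies measure-zero vanishing locus" argument closes the proof.
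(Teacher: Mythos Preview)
Your proof is correct, and the overall architecture---apply the analytic dichotomy of Lemma~\ref{lem:generalAnalyticFact} and produce a single witness in $\mathcal{W}$ with nonsingular Jacobian---matches the paper's. The difference is in how you locate the witness. You work to show that the true density matrix $P$ itself lies in $\mathcal{W}$ for almost every $A$, which requires a second real-analyticity argument: the discriminant of the characteristic polynomial of $A+\wt{B}[P]$ is a real-analytic function of $A$ (on the connected open set where $A+B$ has a spectral gap, so that $P$ varies real-analytically via the contour integral), and you exhibit a diagonal $A_0$ with well-separated entries to show this discriminant is not identically zero. This is valid, though the phrase ``$V_N$ varies real-analytically up to phases'' is slightly loose; the cleaner formulation is that the discriminant depends only on $A+\wt{B}[P]$, hence only on $P$, which is real-analytic in $A$ without any basis choice. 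The paper instead sidesteps the question of whether $P\in\mathcal{W}$ entirely: it observes that $D\mf{F}_P$ is nonsingular (exactly your argument via the $J_i$), that $\mf{F}$ is smooth on a neighborhood of $P$ in $\mathcal{D}_K$ (by the spectral gap of $A+\wt{B}[P]$), and that $\mathcal{W}$ is dense in $\mathcal{D}_K$; so any sequence $Q_j\to P$ with $Q_j\in\mathcal{W}$ eventually has $D\mf{F}_{Q_j}$ nonsingular, furnishing the witness in $\mathcal{W}\setminus\mathcal{W}'$. The paper's route is shorter and avoids the second genericity argument; yours has the minor bonus of establishing the stronger fact $P\in\mathcal{W}$ for a.e.\ $A$.
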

\begin{proof}
We only need to rule out the possibility that $\mathcal{W}' =
\mathcal{W}$. We will do so by considering a point near the true density matrix $P$. 

Recall from our proof of local convergence that $D\mf{F}_P$ has positive eigenvalues (with $\mf{F}$ considered, depending on the case for $K$, as either a map $\mathcal{D}_{\mathbb{C}}\rightarrow\mathcal{D}_{\mathbb{C}}$ or $\mathcal{D}_{\mathbb{R}}\rightarrow\mathcal{D}_{\mathbb{R}}$), hence is nonsingular. This means that $P \notin \mathcal{W}'$.

If $P \in \mathcal{W}$, then $\mathcal{W}' \neq \mathcal{W}$, and we are done. More generally, even if $P \notin \mathcal{W}$, observe that since $\mathcal{W}$ is of full measure in $\mathcal{D}_K$ (hence dense in $\mathcal{D}_K$), there is a sequence of density matrices $Q_j \rightarrow P$ with $Q_j \in \mathcal{W}$. Since $D\mf{F}_P$ is nonsingular, it follows that $D\mf{F}_{Q_j}$ is nonsingular for $j$ sufficiently large. But then $Q_j \in \mathcal{W}$ and $Q_j \notin \mathcal{W}'$, so $\mathcal{W}' \neq \mathcal{W}$, as desired.
\end{proof}

Now we finish the proof of Lemma \ref{lem:preimage} by the lemma below.
\begin{lemma}\label{lem:generalAnalyticFact2}
Let $F:\mathcal{M} \rightarrow \mathcal{N}$ be a map between smooth
manifolds of equal dimension, and let $\mathcal{V}$ be an open subset of
full measure in $\mathcal{M}$ on which $F$ is smooth and $DF$ is
nonsingular. Then for any set $S$ of measure zero in $\mathcal{N}$,
$F^{-1}(S)$ has measure zero in $\mathcal{M}$.
\end{lemma}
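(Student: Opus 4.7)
The plan is to reduce to the classical fact that a diffeomorphism between open subsets of Euclidean space maps null sets to null sets, and then exploit second countability of the manifold $\mathcal{M}$ to patch local conclusions together.

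First I would decompose
\[
F^{-1}(S) = \bigl(F^{-1}(S) \cap \mathcal{V}\bigr) \cup \bigl(F^{-1}(S) \setminus \mathcal{V}\bigr).
\]
The second piece is contained in $\mathcal{M} \setminus \mathcal{V}$, which has measure zero by hypothesis, so it suffices to show that $F^{-1}(S) \cap \mathcal{V}$ has measure zero. Since $F$ is smooth on the open set $\mathcal{V}$ with nowhere-singular Jacobian and $\dim \mathcal{M} = \dim \mathcal{N}$, the inverse function theorem gives for each $x \in \mathcal{V}$ an open neighborhood $U_x \subset \mathcal{V}$ such that $F$ restricts to a smooth diffeomorphism from $U_x$ onto an open subset $F(U_x) \subset \mathcal{N}$.

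Next, because $\mathcal{M}$ is second countable (hence Lindel\"of), the open cover $\{U_x\}_{x \in \mathcal{V}}$ of $\mathcal{V}$ admits a countable subcover $\{U_j\}_{j \in \mathbb{N}}$. For each $j$, the restriction $F\vert_{U_j} : U_j \to F(U_j)$ is a diffeomorphism; working in local charts on both ends (and shrinking $U_j$ if necessary so that it and its image lie in single charts), this becomes a smooth diffeomorphism between open subsets of $\mathbb{R}^k$. Such a diffeomorphism pulls back measure zero sets to measure zero sets: on any compact subset it is Lipschitz (by the mean value inequality applied to its inverse, whose derivative is continuous hence locally bounded), and locally-Lipschitz maps send null sets to null sets. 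Hence $(F\vert_{U_j})^{-1}(S \cap F(U_j))$ is a null subset of $U_j$, and therefore a null subset of $\mathcal{M}$. Taking the countable union over $j$ yields that $F^{-1}(S) \cap \mathcal{V} \subset \bigcup_j (F\vert_{U_j})^{-1}(S \cap F(U_j))$ has measure zero, completing the proof.

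There is no serious obstacle here once the setup is in place: the inverse function theorem provides the local diffeomorphism structure on $\mathcal{V}$, second countability reduces a possibly uncountable cover to a countable one, and the Lipschitz-null-preservation is standard. The only mild subtlety is recalling why local diffeomorphisms preserve null sets (the Lipschitz argument above), and making sure the notion of ``measure zero on a manifold'' is invoked chart-wise so that diffeomorphism invariance applies transparently.
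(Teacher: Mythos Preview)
Your proof is correct and follows essentially the same approach as the paper: decompose $F^{-1}(S)$ into the part outside $\mathcal{V}$ (null by hypothesis) and the part inside, cover $\mathcal{V}$ by inverse-function-theorem neighborhoods on which $F$ is a diffeomorphism, pass to a countable subcover, and use that diffeomorphisms preserve null sets. The only cosmetic difference is that the paper obtains the countable subcover by an explicit rational-points construction (with a remark about avoiding the axiom of choice), whereas you invoke second countability/Lindel\"of directly; your route is the standard and cleaner one.
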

\begin{proof}
Now for every point in $x \in \mathcal{V}$, by the inverse function
theorem we can find a neighborhood $\mathcal{U}_x\ni x$ in $\mathcal{M}$
such that $F:\mathcal{U}_x \rightarrow \mathcal{N}$ is a diffeomorphism
onto its image. Moreover, the size of the neighborhood can be taken to
depend only on the derivatives of $F$ near $x$. In particular, we can
assume that the size of the neighborhood $\mathcal{U}_x$ is locally
bounded away from zero. (By this we mean, fixing some arbitrary Riemannian metric, that for every $x\in \mathcal{V}$, we can take
$\mathcal{U}_x$ to contain a Riemannian ball of radius $r(x)$ about $x$, where $x\mapsto r(x)$ is bounded
away from zero on every compact subset of $\mathcal{V}$).
By fixing a set of coordinate charts on the submanifold $\mathcal{V}$
and taking $\mathcal{X}$ to consist of all the $x$ that are rational
points in any of these coordinate charts, we see that
$\{\mathcal{U}_x\,:\,x\in\mathcal{X}\}$ forms a countable open cover of
$\mathcal{V}$. We remark that the details of this construction are made
quite explicit in order to avoid invoking the axiom of choice.

Let $S$ be a set with measure zero in $\mathcal{N}$. We can write
\begin{eqnarray*}
F^{-1}(S) &\subset& 
\left(\mathcal{M} \backslash \mathcal{V}\right) \cup
\left(F^{-1}(S)\cap \mathcal{V}\right) \\
& = & \left(\mathcal{M} \backslash \mathcal{V}\right) \cup \bigcup_{x\in\mathcal{X}} \left(F^{-1}(S)\cap \mathcal{U}_x \right) \\
& = & \left(\mathcal{M} \backslash \mathcal{V}\right) \cup \bigcup_{x\in\mathcal{X}} F^{-1}\left(S \cap F(\mathcal{U}_x) \right).
\end{eqnarray*}
Now the restriction of $F^{-1}$ to $F(\mathcal{U}_x)$ is a diffeomorphism, so $F^{-1}\left(S \cap F(\mathcal{U}_x)\right)$ is the diffeomorphic image of a measure zero set, hence has measure zero. As a countable union of measure zero sets, $F^{-1}(S)$ has measure zero.
\end{proof}

\bibliographystyle{siam}
\bibliography{aceconv}

\end{document}